\documentclass[11pt,reqno,letterpaper]{amsart}
\usepackage{amssymb}
\usepackage{graphicx,color}
\usepackage{hyperref}
\usepackage[normalem]{ulem}
\usepackage{xcolor}
\usepackage{fullpage} 
\usepackage{cases} 
\usepackage{enumitem} 
\usepackage{mathtools}
\usepackage{amssymb}
\usepackage{hyperref}
\usepackage[normalem]{ulem}
\usepackage{fullpage} 

\usepackage{cite} 

\def\Xint#1{\mathchoice
   {\XXint\displaystyle\textstyle{#1}}%
   {\XXint\textstyle\scriptstyle{#1}}%
   {\XXint\scriptstyle\scriptscriptstyle{#1}}%
   {\XXint\scriptscriptstyle\scriptscriptstyle{#1}}%
   \!\int}
\def\XXint#1#2#3{{\setbox0=\hbox{$#1{#2#3}{\int}$}
     \vcenter{\hbox{$#2#3$}}\kern-.5\wd0}}
\def\dashint{\Xint-}

\newtheorem{theorem}{Theorem}[section]
\newtheorem{corollary}[theorem]{Corollary}
\newtheorem{lemma}[theorem]{Lemma}
\newtheorem{proposition}[theorem]{Proposition}
\newtheorem{definition}[theorem]{Definition}
\newtheorem{remark}[theorem]{Remark}

\numberwithin{theorem}{section}
\numberwithin{equation}{section}

\begin{document}

\title{On the characterization of \\ polyharmonic functions through iterated means}

\author[F. Charro]{Fernando Charro$^\dagger$}
\address{Department of Mathematics, Wayne State University, 656 W. Kirby, Detroit, MI 48202, USA}
\email{fcharro@wayne.edu}
\thanks{}

\author[C. Lebiedzik]{Catherine Lebiedzik}
\address{Department of Mathematics, Wayne State University, 656 W. Kirby, Detroit, MI 48202, USA}
\email{catherine.lebiedzik@wayne.edu}
\thanks{}

\author[N. Raihen]{Md Nurul Raihen}
\address{Department of Mathematics and Statistics, University of Toledo, 2801 W. Bancroft St., Toledo, OH, 43606, USA}
\email{mraihen@wayne.edu}
\thanks{}

\keywords{Polyharmonic functions, mean-value theorems, Pizzetti formulas,  iterated means.
\\
\indent 2020 {\it Mathematics Subject Classification:} 
31B30, 
35J30, 
35B05. 
\\
\indent$\dagger$  Corresponding Author}
\date{}

\begin{abstract}
We introduce an infinite family of mean-value formulas (exact and asymptotic) given in terms of linear combinations of iterated means. We prove that the mean-value formulas in this family characterize real-valued polyharmonic functions of finite order, and that a simple algebraic condition partitions the family into equivalence classes according to the order of  polyharmonicity.
Our key results include strong converses to the mean-value properties ---locally integrable functions satisfying a mean-value property in the family are polyharmonic--- and a regularity result ---locally integrable functions satisfying a mean-value property in the family, whether exact or asymptotic,~are~smooth. 
\end{abstract}

\maketitle

\section{Introduction}

It is well-known that the classical mean-value property characterizes harmonic functions. As it turns out, an in-principle weaker statement, 
known as  the asymptotic mean-value property,  is enough to characterize harmonicity \cite{Blaschke,Privaloff}. More precisely, a real-valued function $u$ is harmonic in $\Omega\subset\mathbb{R}^n$ if and only if 
 \begin{equation}
\label{Laplaciano.formula.asintotica}
u(x)=\dashint_{B_{r} (x)} u(y)\,dy+o({r}^2)
\quad\textrm{as} \ {r} \to0
\end{equation}
for each $x \in \Omega$, where we denote 
$\dashint_{B_{r} (x)} u(y)\,dy=\frac{1}{|B_{r} (x)|}\int_{B_{r} (x)} u(y)\,dy$.
In fact, a less well-known formula due to Pizzetti, see \cite{Pizzetti.1909, O}, provides further insight into the structure of the remainder in~\eqref{Laplaciano.formula.asintotica}.
The Pizzetti ``mean-value formula" expresses the average of an arbitrary $C^{2m}$ function over a ball or sphere as a power series involving its iterated Laplacians. 
Here, and in the sequel, $m$ will denote a positive integer.
Namely, given $u\in C^{2m}$, Pizzetti's formula states that
\begin{equation}\label{Pizzetti.formula.volume}
\dashint_{B_{r}(x)}u(y)\,dy
=
u(x)+\sum_{k=1}^{m}c_{k}\,\Delta^{k}u(x)\,{r}^{2k}+o({r}^{2m})
\quad\textrm{as} \ {r} \to0,
\end{equation}
 for 
$c_k=1/\big(
2^{k} k! \prod_{j=1}^{k}(n+2j)\big)$. 
There is a corresponding formula for the spherical mean, which only differs from \eqref{Pizzetti.formula.volume} in the coefficients in the expansion, given by $d_k=(n+2k)/n\,c_{k}$; see Appendix~\ref{appendix.pizzetti.formulas}.
From   \eqref{Pizzetti.formula.volume}, 
the equivalence between the asymptotic and classical mean-value properties for harmonic functions  becomes more clear.

We say that a function $u$ is  $m$-harmonic in $\Omega\subset\mathbb{R}^n$ (or polyharmonic of order $m$) if and only if  $u\in C^{2m}(\Omega)$ and $\Delta^{m} u=0$ in $\Omega$, where $\Delta^{2}u=\Delta(\Delta u)$, $\Delta^{3}u=\Delta(\Delta^2 u)$ and so forth (see \cite{Aronszajn.et.al.1983,Gazzola.et.al.2010} for background on polyharmonic functions).

 Polyharmonic equations arise in linear elasticity, hydrodynamics, structural engineering, and, more recently, in scattered data interpolation relevant to image processing and reconstruction  \cite{Chui.2009, Gazzola.et.al.2010, Kirmani.Jamil.2018, Kobayashi.et.al.2011, Li.et.al.1999, Meleshko.2003, Schmaltz.2014}.
  Since  mean-value formulas hold under more lenient regularity conditions than the corresponding partial differential equations,
 they can provide a unified approach to polyharmonicity in diverse settings beyond the Euclidean space, such as Riemannian manifolds, Carnot groups, and~graphs.
 
We  find several different mean-value characterizations  of polyharmonicity  in the literature.
In 1909,
Pizzetti \cite{Pizzetti.1909} showed \eqref{Pizzetti.formula.volume} for the spherical mean in dimensions $n=2,3$ and deduced that an $m$-harmonic function satisfies a mean-value formula involving its iterated Laplacians up to order $m-1$. 
Sbrana \cite{Sbr} proved the converse result in dimension $n=2$. Specifically, he proved that if a function with finite, integrable derivatives of order $2m-2$ in a domain $\Omega$ satisfies Pizzetti's mean-value formula, then it is $m$-harmonic in~$\Omega$.
Nicolesco extended the Pizzetti-Sbrana mean-value property to arbitrary dimensions for the spherical and solid means \cite{Nicolesco.book.1936}. 
Given the Pizzetti-Sbrana mean-value  property involved derivatives of the solution,  subsequent research aimed to produce mean-value characterizations of polyharmonic functions that did not  involve~derivatives. 
In \cite{NM, Nicolesco.book.1936}, Nicolesco   
 gave the following characterization:
a function $u\in L_{\textnormal{loc}}^1(\Omega)$ is $m$-harmonic in $\Omega$ if and only if for almost every $x\in \Omega$ and almost every~${r}$ with $0<{r}<\textnormal{dist}(x, \partial \Omega)$ (with the understanding, here and in the sequel, that $\textnormal{dist}(x, \partial \Omega)=\infty$ if $\Omega=\mathbb{R}^n$), it holds
\[
     u(x)
     \cdot
    \begin{vmatrix}
    1 & 1 & \ldots & 1\\
    1 & \frac{n}{n+2}  & \ldots & \frac{n}{n+2(m-1)}\\
    \vdots & \vdots & \ddots & \vdots\\
    1 & \frac{n^{m-1}}{(n+2)^{m-1}}  & \ldots & \frac{n^{m-1}}{(n+2m-2)^{m-1}}
    \end{vmatrix}
=
    \begin{vmatrix}
     \mu_{0}(u,x,{r}) & 1 & \ldots & 1\\
     \mu_{1}(u,x,{r}) & \frac{n}{n+2} & \ldots & \frac{n}{n+2(m-1)}\\
     \vdots & \vdots & \ddots & \vdots\\
     \mu_{m-1}(u,x,{r})& \frac{n^{m-1}}{(n+2)^{m-1}}  & \ldots & \frac{n^{m-1}}{(n+2m-2)^{m-1}}
    \end{vmatrix}
\]
for the iterated means 
\[
\mu_{k}(u,{r},x)
=
\begin{cases}
\displaystyle\dashint_{\partial B_{r}(x)}u\,d\mathcal{H}^{n-1}(y)&\textrm{for}\ k=0\vspace{5pt}\\
\displaystyle
\dashint_{ B_{r}(x)}u\,dy=
\frac{n}{{r}^{n}}\int_{0}^{r} t^{n-1}\mu_{0}(u,t,x)\,dt
&\textrm{for}\ k=1\vspace{5pt}\\
\displaystyle\frac{n}{{r}^{n}}\int_{0}^{r} t^{n-1}\mu_{k-1}(u,t,x)\,dt&\textrm{for}\ k\geq2.
\end{cases}
\]
Cheng \cite{CM} noted and corrected an oversight in Nicolesco’s original proof, and pointed out 
that  converses of asymptotic mean-value properties require local uniformity in $r$. 
His analysis focused on the biharmonic case, with only brief remarks on extensions to higher orders.

Picone \cite{M} proved a different mean-value property for $m$-harmonic functions, in which the mean-values are considered on concentric spheres. More precisely, he proved that for  every $x\in\Omega$ and   ${r}<\textnormal{dist}(x,\partial\Omega)/\sqrt{m}$, a $m$-harmonic function $u$ satisfies
\begin{equation} \label{MVP.Picone}
    u(x)=\sum_{j=0}^{m-1}\binom{m}{j}(-1)^{j}\,\dashint_{\partial B_{{r}_j}(x)}u(y)\,d\mathcal{H}^{n-1}(y),
 \end{equation}
 where ${r}_j=\sqrt{\frac{m-j}{m}}{r}$ for $j=0,\ldots,m-1$.
In the same paper, Picone conjectured that \eqref{MVP.Picone} should be characteristic for $m$-harmonic functions.
Fichera \cite{Fichera} proved this true for biharmonic functions ($m = 2$), and claimed that the general case should follow by a direct extension of his proof.
Caramanica \cite{Caramanica.1987} extended Fichera's proof, but in this way, she did not obtain Picone's conjecture but a different mean-value property that coincides with \eqref{MVP.Picone} only for harmonic and biharmonic functions ($m = 1, 2$).
Bramble and Payne  \cite{BHP} had also obtained a mean-value formula and converse theorem; their formula was given in terms of $m\times m$ determinants involving  means over concentric spheres. 
Caramuta and Cialdea \cite{Caramuta.Cialdea.2014} proved a characterization   in the line of Bramble and Payne's under weaker hypotheses on the  converse result, which included Picone and Caramanica's formulas  as particular cases.  Further  mean-value theorems for polyharmonic functions have been obtained by  \L{}ysik \cite{Lysik.2011,Lysik.2012,Lysik.2015,Lysik.2016},  Zalcman \cite{LZ},  and others.
In recent years, a parallel line of research has extended mean-value characterizations beyond the linear setting to nonlinear equations, 
underscoring the flexibility of the mean-value approach.
Manfredi, Parviainen, and Rossi \cite{MPR} established an asymptotic mean-value characterization of $p$-harmonic functions tied to the dynamic programming principle for  random Tug-of-War games.
Additionally, in \cite{BCMR}, asymptotic mean-value formulas were obtained for a wide family of fully nonlinear second-order elliptic equations including Pucci, Isaacs, and $k$-Hessian~operators.

In contrast to the harmonic case, existing mean-value formulas for polyharmonic functions are somewhat cumbersome  
 and often lack a natural geometric or probabilistic interpretation, which limits their applicability.
For example, although it is well-known that polyharmonic functions are smooth (see \cite{Aronszajn.et.al.1983}), mean-value properties are not the standard tool to prove smoothness; instead, it is established through elliptic regularity theory, integral representation formulas, or potential theory.
Our goal is to provide a more natural and comprehensive characterization of polyharmonicity through more broadly applicable mean-value properties.

Before stating our main results in the next section, let us fix the notation used in the sequel.
In the following, we respectively denote by $A_{r}$ and $\mathcal{A}_{r}$  the integral averages over the ball and
~sphere,~i.e.,
\[
A_{r}[v](x) = \dashint_{B_{r}(x)} v(y)\, dy,
\qquad
\textrm{and}
\qquad
\mathcal{A}_{r}[v](x) = \dashint_{\partial B_{r}(x)} v(y)\,d\mathcal{H}^{n-1}(y).
\]
 We use the notation $\mathcal{H}^{k}$ for the $k$-dimensional Hausdorff measure. 
We will also consider iterated averages, given by
\[
A_{r}^m[v](x)=A_{r}\big[A_{r}^{m-1}[v]\big](x)
= \dashint_{ B_{r}(x)} A_{r}^{m-1}[v](y)\, dy
\qquad\textrm{for all}\ m\geq1,
\]
 with the convention that $A_{r}^0= I$, the identity operator, i.e.,  $A_{r}^0[v](x)=v(x)$.
Moreover, given a real polynomial $P(t)= \sum_{k=0}^{m}a_{k}t^k$, we will denote
\[
    P(A_{r})[u](x)= \sum_{k=0}^{m}a_{k}\,A_{r}^k[u](x).
\]
We define  $\mathcal{A}_{r}^m$ and  $P(\mathcal{A}_{r})$ analogously.
Whenever there is no ambiguity, we will drop the dependence on $x$ in the notation and simply write $u$, $A_{r}[u]$, $A_{r}^k[u]$, $P(A_{r})[u]$ and so forth. 
Often, we will write polynomials $P(A_{r})[u]$ as a product of  factors, which is to be understood as an operator composition,~e.g., 
\[
\begin{split}
(A_{r}^2-I)[u]
&=
\big((A_{r}+I)
(A_{r}-I)\big)[u]
\\
&=
(A_{r}+I)\big[
(A_{r}-I)[u]\big]
=
(A_{r}+I)\big[
A_{r}[u]-u\big]
=
A_{r}^2[u]-u.
\end{split}
\]
In the sequel, $\textnormal{deg(P)}$ denotes the degree of  $P$.

We will often use the `little-$o$' notation. Let us recall it here in the different forms we will use it.
 Given  a constant $c$, an integer $k$, and a real function $g$, we write 
\[
c\le g({r}) + o({r}^k)\text{ as } {r}\to 0
\qquad\big(\textrm{respectively,}\  c\ge g({r}) + o({r}^k)\text{ as } {r}\to 0\big),
\]
whenever we have
\[
\lim_{{r}\to 0} \frac{\left[ c-g({r})\right]^+}{{r}^k}=0,
\qquad\left(\textrm{resp.,}\ 
\lim_{{r}\to 0} \frac{\left[ c-g({r})\right]^-}{{r}^k}=0
\right).
\]
In particular, for functions $x\mapsto f(x,r)$ defined on a domain $\Omega\subset\mathbb{R}^n,$ we have
$f(x,r)= g(x,r) + o({r}^k)$  as~${r}\to 0$
if for every $\epsilon>0$, there exists $r_0$ such~that
\[
|f(x,r)-g(x,r)|\leq\epsilon\, {r}^k\quad\textrm{for all}\ 0<r<r_0.
\]
Note that, in principle, $r_0$ may depend on $x\in\Omega$, which needs to be taken into account when considering iterated means.
This point was first raised by Cheng \cite{CM}, who showed that converses of the higher-order asymptotic mean-value properties from Nicolesco's family require locally uniform limits.
 We show in Remark \ref{remark.uniformity.order} below
 that the same is true for our family of iterated mean-value properties;
therefore, we introduce the following clarifying definition.
\begin{definition}[{\rm $o(r^k)$ locally uniformly}] \label{loc_unif} We say
$f(x,r)=g(x,r)+o({r}^{k})$ as ${r} \to 0$ \emph{locally uniformly in $\Omega$} if for every
compact subset~$K\subset \Omega$, given $\epsilon>0$, there exists $r_0$ such that
\[
\operatorname*{ess\,sup}_{x\in K}|f(x,r)-g(x,r)|\leq\epsilon\, {r}^k\quad\textrm{for all}\ 0<r<r_0.
\]
\end{definition}
%
We are ready to state our main results, Theorems \ref{thm.MVP.polyharmonic.intro}, \ref{thm.MVP.harmonic.intro.2}, and \ref{thm.regularity} below.

\section{Statement of Main Results}

 In our first result, we introduce a family of mean-value formulas, given in terms of linear combinations of iterated means, which characterizes real-valued polyharmonic functions of finite order. 
Similarly to the harmonic case, the family includes exact and asymptotic formulas, with the latter requiring increasingly higher-order approximations as the order of polyharmonicity increases.
The result includes strong converses to the mean-value properties, in the sense that locally integrable functions satisfying a mean-value property in the family are polyharmonic.

\begin{theorem}[Mean-value characterization of polyharmonic functions]\label{thm.MVP.polyharmonic.intro}
 Let $\Omega\subset\mathbb{R}^n
$ be an open set, $m$ a positive integer, and $u \in L^1_\textnormal{loc}(\Omega)$.
 Then, the following are equivalent:
\begin{enumerate}[series=mainlist]\itemsep3pt
    \item $u$ is $m$-harmonic in $\Omega$, i.e., $u\in C^{2m}(\Omega)$ and $\Delta^mu=0$ in $\Omega$ in the classical sense.
    
      \item Let
$P$ be a real polynomial for which 1 is a root of multiplicity exactly 
$m$. Then,
\begin{equation}\label{MVP.mharmonic.turbo.intro.sharp}
    P(A_{r})[u](x)=0  \quad
    \textnormal{for a.e.}\ x\in\Omega\ \textnormal{and every}\ r<\frac{\textnormal{dist}(x,\partial\Omega)}{\textnormal{deg(P)}},
\end{equation}
  where $\textnormal{deg(P)}$ denotes the degree of  $P$. 
          \item Let
$P$ be a real polynomial for which 1 is a root of multiplicity exactly 
$m$. Then,
\begin{equation}\label{AMVP.mharmonic.turbo.intro.sharp}
    P(A_{r})[u](x)=o({r}^{2m})  \quad
    \textnormal{as}\ {r}\to0\ \textnormal{locally uniformly in}\ \Omega.
\end{equation}
\end{enumerate}
The above equivalences also hold with the spherical average $\mathcal{A}_{r}$ in place of $A_{r}$.
\end{theorem}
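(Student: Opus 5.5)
The plan is to prove (1)$\Rightarrow$(2)$\Rightarrow$(3)$\Rightarrow$(1), with a regularization step inserted in the last implication. Throughout write $P(t)=(t-1)^mQ(t)$ with $Q(1)\neq0$. The basic tool is the operator identity $P(A_r)=Q(A_r)(A_r-I)^m$: the averages commute with one another, and $P(A_r)[u](x)$ only involves values of $u$ in $B_{\deg(P)\,r}(x)$.

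\textbf{(1)$\Rightarrow$(2).} For $v$ $m$-harmonic, I claim $(A_r-I)^m[v]=0$ whenever $r<\textnormal{dist}(x,\partial\Omega)/m$. Pizzetti's formula is exact for $m$-harmonic functions, since the series terminates; this follows from the classical radial-ODE argument for spherical means. It gives
\[
A_r[v]=v+\sum_{k=1}^{m-1}c_k r^{2k}\Delta^k v ,
\]
which we write as $A_r=I+N_r$ on $m$-harmonic functions. Here $N_r=\sum_{k\ge1}c_kr^{2k}\Delta^k$ commutes with $\Delta$ and maps $m$-harmonic functions to $(m-1)$-harmonic ones. Hence $N_r^m[v]=0$, because each factor lowers the order of polyharmonicity by one and $A_r$ preserves polyharmonicity. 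Then
\[
P(A_r)[u]=Q(A_r)\big[(A_r-I)^m[u]\big]=Q(A_r)[0]=0 .
\]
The domain constraint $r<\textnormal{dist}/\deg(P)$ is exactly what keeps every iterated average inside $\Omega$. The same argument works for $\mathcal{A}_r$ with the coefficients $d_k$.

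\textbf{(2)$\Rightarrow$(3)} is trivial.

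\textbf{(3)$\Rightarrow$(1)} has three steps.

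\emph{Step 1: mollify.} Let $u_\varepsilon=u*\eta_\varepsilon$. Since convolution commutes with $A_r$, we get $P(A_r)[u_\varepsilon]=o(r^{2m})$ locally uniformly in $\Omega_\varepsilon=\{x:\textnormal{dist}(x,\partial\Omega)>\varepsilon\}$. Here local uniformity (essential sup) is exactly what is needed to pass the little-$o$ through the convolution integral.

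\emph{Step 2: expand.} For smooth $w$, Pizzetti gives $A_r=I+N_r+o(r^{2m})$, so $A_r-I=c_1r^2\Delta+O(r^4)$ as operators on $C^\infty$. The iterates require uniform Taylor control, which holds locally for smooth functions. Therefore
\[
P(A_r)[w]=Q(1)\,c_1^m r^{2m}\Delta^m w+o(r^{2m}) ,
\]
since all lower-order terms vanish because of the factor $(A_r-I)^m$. Concretely, I would expand $P(1+s)=s^m\tilde Q(s)$ and use $s=A_r-I=O(r^2)$. Hence $\Delta^m u_\varepsilon=0$ in $\Omega_\varepsilon$.

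\emph{Step 3: pass to the limit.} The $u_\varepsilon$ are $m$-harmonic in $\Omega_\varepsilon$ and converge to $u$ in $L^1_{\rm loc}$. By the exact mean-value property from (1)$\Rightarrow$(2), or simply by Weyl's lemma for $\Delta^m$ (hypoellipticity: $\Delta^m u=0$ in distributions), $u$ agrees a.e.\ with a smooth $m$-harmonic function. Concretely, $\langle u,\Delta^m\varphi\rangle=\lim_{\varepsilon\to0}\langle u_\varepsilon,\Delta^m\varphi\rangle=0$, and $\Delta^m$ is elliptic.

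The main obstacle is Step 2, namely making the iterated expansion rigorous. The remainder $o(r^{2m})$ in $A_r[w]$ must hold uniformly on compacts for every derivative that is needed, so that applying $A_r$ again to a remainder keeps it $o(r^{2m})$. For smooth $w$ this is immediate: Taylor's theorem with a uniform remainder on compact sets gives a bound like $C r^{2m+2}\|w\|_{C^{2m+2}(K')}$. This is precisely why we mollify first rather than work with $u$ directly. A second subtle point is the exactness of the Pizzetti expansion for the spherical mean of $m$-harmonic functions, which I would take from Appendix~\ref{appendix.pizzetti.formulas}. Finally, the regularity of $u$ itself (equality a.e.\ with a $C^{2m}$ function) comes from the distributional argument rather than the mean values. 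This is consistent with the statement, where (1) is read up to a.e.\ modification.
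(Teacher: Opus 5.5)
Your proof is correct, but once you mollify it follows a genuinely different route from the paper.

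Your (1)$\Rightarrow$(2) is the paper's Proposition~\ref{thm.mvp.mharmonic.smooth.second.piece} written in nilpotent form. One caveat: if $v$ is only $C^{2m}$, then $N_r v$ is only $C^2$. Before iterating you should either mollify, as the paper does in Section~\ref{section.general.case}, or first get $v\in C^\infty$ from the same hypoellipticity you use later. Reading off $P(A_r)[w]=Q(1)c_1^m r^{2m}\Delta^m w+o(r^{2m})$ from Lemma~\ref{epsilon.lemma2} is a more direct version of the induction in Proposition~\ref{thm.mvp.mharmonic.smooth.first.piece}.

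The real difference is how you return from $u_\epsilon$ to $u$. You pass to the limit in the equation, $\langle u,\Delta^m\varphi\rangle=\lim_{\epsilon\to0}\langle u_\epsilon,\Delta^m\varphi\rangle=0$, and invoke hypoellipticity of $\Delta^m$. So you only need the asymptotic property for each $u_\epsilon$ separately, with no uniformity in $\epsilon$. The paper instead transfers the mean-value identities themselves between $u$ and $u_\epsilon$. It uses Lemmas~\ref{polynomial.mvp} and~\ref{asymptotic.mvp}, the latter with remainders uniform in $\epsilon$, to obtain Theorem~\ref{thm.MVP.polyharmonic.equivalence}. It then proves smoothness of $u$ from the exact spherical identity in Section~\ref{section.regularity}, deliberately avoiding elliptic regularity. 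The paper's route aims at a mean-value-only theory and gives the cross-equivalences directly for $L^1_{\mathrm{loc}}$ functions: different $P$, exact versus asymptotic, solid versus spherical. Yours is shorter and rests on a standard black box.

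Your route is also the safer one at this step. In the paper's smoothness argument, $v$ is built from one fixed radius $r$. Yet the polar-coordinates computation of $v*\eta_\epsilon$ uses $\mathcal{A}_\rho[v]=u$ for every $\rho\in(0,\epsilon)$, which is false in general once $m\ge2$. For $m=2$ and $u=|x|^2$ one gets $v=2u-\mathcal{A}_r[u]=|x|^2-r^2$ and $v*\eta_\epsilon=u+\int|z|^2\eta_\epsilon(z)\,dz-r^2$.

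Your parenthetical alternative (``by the exact mean-value property'') is not yet an argument for regularity. You can complete it as follows:
\begin{itemize}
\item Pass the identity $u=\sum_{j=1}^m\binom{m}{j}(-1)^{j-1}A_r^j[u]$ from the $m$-harmonic $u_\epsilon$ to $u$ a.e., using $A_r^j[u_\epsilon]=A_r^j[u]*\eta_\epsilon\to A_r^j[u]$ locally uniformly.
\item Bootstrap. The right-hand side is continuous, and $A_r$ maps $C^k$ into $C^{k+1}$ because $\partial_i A_r[g](x)=|B_r|^{-1}\int_{\partial B_r(x)}g\,\nu_i\,d\mathcal{H}^{n-1}$, with $\nu$ the outer unit normal.
\end{itemize}
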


\begin{remark}[Optimal order of the remainders]
The order $o(r^{2m})$ in \eqref{AMVP.mharmonic.turbo.intro.sharp} is optimal. The reason is that, given a positive integer $m$ and $l\in(0,2m)$, there are smooth functions $u$ that are not  $m$-harmonic ($\Delta^mu\neq0$ in $\Omega$), but satisfy
$(A_{r}-I)^m[u](x)=o({r}^{l})$ as ${r}\to0$ locally uniformly in $\Omega$, see Corollary \ref{corollary.optimal.order}.
\end{remark}

Next, we display important particular cases of \eqref{MVP.mharmonic.turbo.intro.sharp} and \eqref{AMVP.mharmonic.turbo.intro.sharp} to be used in the sequel.

\begin{corollary}[Equivalent forms of \eqref{MVP.mharmonic.turbo.intro.sharp} and \eqref{AMVP.mharmonic.turbo.intro.sharp}]\label{remark.with.extra.equivalences}
Under the same hypotheses of Theorem \ref{thm.MVP.polyharmonic.intro}, the following are equivalent to the statements in the theorem:
 \begin{enumerate}[resume=mainlist]
    \item ``Canonical" class representative:
    For almost every $x\in\Omega$ and every   ${r}<\textnormal{dist}(x,\partial\Omega)/m$, it holds that
\begin{equation}\label{MVP.mharmonic.main.intro}
    (A_{r}-I)^m[u](x)=0,
    \end{equation} 
    or, equivalently,
\begin{equation}\label{MVP.mharmonic.main.v2}
    u(x)=\sum_{j=1}^{m}\binom{m}{j}(-1)^{j-1}A_{r}^j[u](x).
    \end{equation}
\end{enumerate}
\begin{enumerate}[resume=mainlist]    
    \item Given $Q$, a polynomial with $Q(1)\neq0$, it holds that
\begin{equation}\label{MVP.mharmonic.turbo.intro}
    \Big(Q(A_{r})\,(A_{r}-I)^m\Big)[u](x)=0,
    \end{equation}
      for almost every $x\in\Omega$ and every   ${r}<\textnormal{dist}(x,\partial\Omega)/\big(\textnormal{deg(Q)}+m\big)$, and $\textnormal{deg(Q)}$  the degree~of~$Q$. 
      \item 
     Given $Q$, a polynomial with $Q(1)\neq0$,  it holds that  \begin{equation}\label{AMVP.mharmonic.turbo.intro}
    \Big(Q(A_{r})\,(A_{r}-I)^m\Big)[u](x)=o({r}^{2m}),
    \end{equation}
as ${r}\to0$ locally uniformly in $\Omega$.  In particular, \eqref{MVP.mharmonic.main.intro} holds with remainder $o({r}^{2m})$ as $r\to0.$
\end{enumerate}
As before, these equivalences also hold with the spherical average $\mathcal{A}_{r}$ in place of $A_{r}$.
\end{corollary}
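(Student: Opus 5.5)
Corollary~\ref{remark.with.extra.equivalences} is a specialization of Theorem~\ref{thm.MVP.polyharmonic.intro}, so the plan is to identify each new statement as an instance of (2) or (3) for a suitable polynomial $P$. No new analysis is needed beyond checking the degree and the multiplicity of the root $1$.

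\emph{Statements \eqref{MVP.mharmonic.turbo.intro} and \eqref{AMVP.mharmonic.turbo.intro}.} Set $P(t)=Q(t)(t-1)^m$. Then $\deg P=\deg Q+m$, so the admissible radii $r<\textnormal{dist}(x,\partial\Omega)/(\deg Q+m)$ coincide with those in \eqref{MVP.mharmonic.turbo.intro.sharp}. Since $Q(1)\neq0$, the root $1$ of $P$ has multiplicity exactly $m$. Polynomials in the single operator $A_r$ commute and compose according to polynomial multiplication, so $P(A_r)=Q(A_r)(A_r-I)^m$ as operators. This identity follows directly from the definition $P(A_r)[u]=\sum a_kA_r^k[u]$ and linearity of $A_r$ on $L^1_{\rm loc}$, provided all iterates stay inside $\Omega$. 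That is exactly what the bound $r\deg P<\textnormal{dist}(x,\partial\Omega)$ guarantees, since $A_r^k[u](x)$ only involves values of $u$ in $B_{kr}(x)$. Hence \eqref{MVP.mharmonic.turbo.intro} is precisely (2) for this $P$, and \eqref{AMVP.mharmonic.turbo.intro} is precisely (3). Both are therefore equivalent to (1).

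\emph{Canonical statement \eqref{MVP.mharmonic.main.intro}.} This is the special case $Q\equiv1$, i.e.\ $P(t)=(t-1)^m$, with $\deg P=m$. The equivalent form \eqref{MVP.mharmonic.main.v2} comes from expanding by the binomial theorem:
\[
(A_r-I)^m[u]=\sum_{j=0}^m\binom{m}{j}(-1)^{m-j}A_r^j[u].
\]
Setting this equal to zero and isolating the $j=0$ term $(-1)^m u$ gives
\[
u=\sum_{j=1}^m\binom{m}{j}(-1)^{j-1}A_r^j[u]
\]
after multiplying through by $(-1)^{m+1}$. The closing remark of (7), that \eqref{MVP.mharmonic.main.intro} holds with remainder $o(r^{2m})$, is likewise the case $Q\equiv1$ of \eqref{AMVP.mharmonic.turbo.intro}.

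\emph{Spherical version.} Replacing $A_r$ by $\mathcal{A}_r$ throughout changes nothing, since the argument uses only linearity and the support property of the averaging operator. The theorem supplies the spherical version of (2)--(3).

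The only point needing care is the identification of the formal polynomial product with the operator composition. In particular, this requires checking that the domains of definition match, with $x$ restricted to a.e.\ points and $r$ below the stated thresholds. I expect this to be a short bookkeeping step rather than a genuine obstacle.
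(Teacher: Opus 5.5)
Your proposal is correct and matches the paper's approach. The paper also treats (4)--(6) as the instances $P(t)=Q(t)(t-1)^m$ (with $Q\equiv 1$ for the canonical form) of the polynomial mean-value properties, records their mutual equivalence via Theorem~\ref{thm.MVP.polyharmonic.equivalence}, and then proves only (1)$\Leftrightarrow$(4) directly; your checks of the degree, the exact multiplicity of the root $1$, and the binomial expansion are precisely the bookkeeping this identification requires.
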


Before proceeding with our other two main results, Theorems \ref{thm.MVP.harmonic.intro.2} and \ref{thm.regularity}, let us illustrate Theorem~\ref{thm.MVP.polyharmonic.intro} and discuss some concrete instances of the family of mean-value properties.
Indeed, by taking $m=1$ in \eqref{MVP.mharmonic.main.intro}--\eqref{MVP.mharmonic.main.v2}, we identify the  classical mean-value property for harmonic functions, $u(x)=A_{r}[u](x)$. 
The asymptotic mean-value formula for harmonic functions, $u(x)=A_{r}[u](x)+o(r^2)$ as $r\to0$, emerges from \eqref{AMVP.mharmonic.turbo.intro} when both $Q$ and $m$ are set equal to 1.
Of course, there are corresponding   results for the spherical average $\mathcal{A}_{r}$, which we omit from this discussion for conciseness.
Theorem~\ref{thm.MVP.polyharmonic.intro} weaves these well-established results into a new, infinite family of mean-value formulas characterizing harmonicity, given by \eqref{MVP.mharmonic.turbo.intro}--\eqref{AMVP.mharmonic.turbo.intro}. 
For example,  all the mean-value properties of the form 
\[
u(x)
=
A_r^k[u](x)
=
\dashint_{B_{r}(x)}\dashint_{B_{r}(y_1)}
\!\!\cdots\;
\dashint_{B_{r}(y_{k-1})}u(y_k) \,dy_k\cdots dy_2\,dy_1,
\qquad k=1,2,\ldots,
\]
 with or without a remainder,
characterize harmonicity.
These mean-value properties correspond to polynomials of the form
$
Q(A_{r})=
I+A_{r}+A_{r}^2+\cdots+A_{r}^{k-1}$
in \eqref{MVP.mharmonic.turbo.intro}, \eqref{AMVP.mharmonic.turbo.intro},
so that 
\[
(A_r^k-I)[u]
=
\big(I+A_{r}+A_{r}^2+\cdots+A_{r}^{k-1}\big)
(A_r-I)[u].
\]
Other choices of $Q$ allow us to consider other examples; for instance, the choice $Q(A_r)=A_r$
produces the mean-value formula
$(A_{r}^2-A_{r})[u]=0$, or, 
\[
    \dashint_{B_{r}(x)}u(y)\,dy
    =
    \dashint_{B_{r}(x)}
    \dashint_{B_{r}(y)} u(z) \,dz
    dy,
\]
which also characterizes harmonicity. 
Let us emphasize that the most interesting part of this result is what in the literature is known  as a strong converse to the mean-value property; i.e., that locally integrable functions satisfying the above mean-value formulas are indeed harmonic.

Naturally, one may wonder why formulas \eqref{MVP.mharmonic.turbo.intro}--\eqref{AMVP.mharmonic.turbo.intro} exclude $A_{r} -I$ as a factor of $Q(A_{r})$;
especially since, due to \eqref{MVP.mharmonic.main.intro}, an $m$-harmonic function $u$ satisfies
\[
(A_{r}-I)^{m+k}[u]
=
(A_{r}-I)^{k}\big[(A_{r}-I)^{m}[u]\big]
=0
\quad\textrm{for all $k\geq1$}
\]
in $\Omega$ for every ${r}$ small enough (and similarly with remainders).
In other words, one may wonder why the converse mean-value property seems to fail when $Q(1)=0$ in \eqref{MVP.mharmonic.turbo.intro}--\eqref{AMVP.mharmonic.turbo.intro}.
 The reason is that, for every $k$, the operator $(A_{r}-I)^{m+k}$ characterizes $(m+k)$-harmonic functions (such that $\Delta^{m+k} u =0$), and not just $m$-harmonic functions. 
For example, the mean-value formula
$(A_{r}-I)^2[u]=0$, or
\begin{equation}\label{mvp.biharmonic.intro}
u(x)=2\,\dashint_{B_{r}(x)}u(y) \,dy
    -\dashint_{B_{r}(x)}
    \dashint_{B_{r}(y)} u(z) \,dz dy,
\end{equation}
characterizes the set of biharmonic functions,  which strictly contains the set of harmonic functions.
Geometrically, formula \eqref{mvp.biharmonic.intro} shows that, at every point, the difference between the value of a biharmonic function and its average must be the same as the difference between the average and its second average, i.e.,
\[
(A_{r}-I)[u](x)=(A_{r}^2-A_{r})[u](x)
\]
for all $x\in\Omega$ and ${r}$ small enough.
For a general $m$-harmonic function $u$, equation
\eqref{MVP.mharmonic.main.intro} says that
the value of $(A_{r}-I)^{m-1}[u]$ and its average must coincide.
Incidentally, one can also rewrite \eqref{mvp.biharmonic.intro}~as
\begin{equation}\label{intriguing.MVP.biharmonic}
\dashint_{B_{r}(x)}u(y) \,dy
=
\frac{u(x)}{2}
+
\frac12\,
    \dashint_{B_{r}(x)}
    \dashint_{B_{r}(y)} u(z) \,dz dy,
\end{equation}
where we see that for biharmonic functions the average at a given point must equal the arithmetic mean of the value at the center and the second average.
However, the nice geometric interpretation given by \eqref{intriguing.MVP.biharmonic} does not have 
an obvious generalization to higher orders.

\medskip

In the following result, we show that Theorem \ref{thm.MVP.polyharmonic.intro} is optimal since the polynomials of the form \eqref{MVP.mharmonic.turbo.intro}--\eqref{AMVP.mharmonic.turbo.intro} are the only ones that characterize $m$-harmonic functions via a mean-value property. 
In this way, Theorem \ref{thm.MVP.polyharmonic.intro} partitions the set of mean-value properties of the form 
$P(A_{r})[u]=0$
into equivalence classes according to the order of the polyharmonic functions they characterize.

\begin{theorem}[Optimality of the characterization]\label{thm.MVP.harmonic.intro.2}
Consider a non-constant real polynomial $P$, and assume the existence of a converse ``asymptotic mean-value property of order $2m$," given by $P$, that characterizes $m$-harmonic functions. That is to say,
 assume that
 every nontrivial  $u\in L^1_\textnormal{loc}(\Omega)$ that satisfies the ``asymptotic mean-value formula" \begin{equation}\label{AMVP.m_harmonic.gen.P.intro}
    P(A_{r})[u](x)=o({r}^{2m})  \quad
    \textrm{as}\ {r}\to0\ \textrm{locally uniformly in}\ \Omega,
\end{equation}
is $m$-harmonic in $\Omega$, but not necessarily  $(m-1)$-harmonic.
Then,  $P(A_{r})=Q(A_{r})(A_{r}-I)^m$ for some real polynomial $Q$ with $Q(1)\neq0$.
In particular, the conclusion holds when we replace \eqref{AMVP.m_harmonic.gen.P.intro}~by
    \begin{equation}\label{MVP.harmonic.gen.P.intro}
    P(A_{r})[u](x)=0  \quad
    \textrm{for a.e. $x\in\Omega$ and every ${r}<\frac{\textnormal{dist}(x,\partial\Omega)}{\textnormal{deg(P)}}$.
    }
    \end{equation}
\end{theorem}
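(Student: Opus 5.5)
Write $P(t)=Q(t)(t-1)^k$ with $Q(1)\neq0$ and $k\ge0$ the exact multiplicity of $1$ as a root of $P$. The goal is to show $k=m$, and I will derive a contradiction in each of the cases $k=0$, $0<k<m$ and $k>m$. Throughout I use Theorem \ref{thm.MVP.polyharmonic.intro}, which says that $P(A_r)[u]=0$ (exactly, or up to $o(r^{2k})$ locally uniformly) characterizes $k$-harmonic functions whenever $k\ge1$. The tool for producing counterexamples is Pizzetti's formula \eqref{Pizzetti.formula.volume} applied to polynomials. If $u$ is a polynomial, then $A_r[u]=u+\sum_{j\ge1}c_j r^{2j}\Delta^j u$ is an exact finite sum. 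Hence $A_r=I+L_r$ on polynomials, where $L_r=\sum_{j\ge1}c_jr^{2j}\Delta^j$ commutes with $\Delta$ and is nilpotent.

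\emph{Case $k>m$.} Take $u=|x|^{2m}$. It is $(m+1)$-harmonic but not $m$-harmonic, since $\Delta^m u$ is a nonzero constant, and it is nontrivial. Since $k\ge m+1$, Theorem \ref{thm.MVP.polyharmonic.intro} gives $(A_r-I)^{m+1}[u]=0$ exactly for every $r$. Therefore $P(A_r)[u]=Q(A_r)(A_r-I)^{k-m-1}(A_r-I)^{m+1}[u]=0$. The hypothesis \eqref{AMVP.m_harmonic.gen.P.intro} then holds trivially, but $u$ is not $m$-harmonic, which is a contradiction.

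\emph{Case $1\le k<m$.} The hypothesis that the property characterizes $m$-harmonic functions "but not necessarily $(m-1)$-harmonic" implies that some genuinely $m$-harmonic, not $(m-1)$-harmonic, function must satisfy it. I would take $u=|x|^{2m-2}$, which is $m$-harmonic and not $(m-1)$-harmonic. I expand $(A_r-I)^k[u]=L_r^k u$ on this polynomial. Its lowest-order term is $c_1^k r^{2k}\Delta^k u$. This is nonzero because $k\le m-1$ and $\Delta^{m-1}u$ is a nonzero constant, so $\Delta^k u\neq0$. Applying $Q(A_r)=Q(1)I+O(r^2)$, which is a finite expansion on polynomials, gives $P(A_r)[u]=Q(1)c_1^kr^{2k}\Delta^ku+O(r^{2k+2})$. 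This is not $o(r^{2m})$, because $2k<2m$. So $u$ fails \eqref{AMVP.m_harmonic.gen.P.intro} even though it is $m$-harmonic, and no $m$-harmonic function that is not $(m-1)$-harmonic satisfies the property. This contradicts the assumption. The case $k=0$ is handled the same way. There I would use the constant function $u=1$, which is $m$-harmonic, with $P(A_r)[1]=P(1)\neq0$. Alternatively, I would note that $P(A_r)[u]\to P(1)u$, so the property forces $u=0$, contradicting nontriviality.

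\emph{Main obstacle.} The main difficulty is interpreting the hypothesis precisely enough that the case $k<m$ is really excluded. "Characterizes $m$-harmonic functions" should be read as an equivalence: $m$-harmonic functions satisfy \eqref{AMVP.m_harmonic.gen.P.intro} and conversely. Under that reading the polynomial $|x|^{2m-2}$, or even a generic $m$-harmonic polynomial, gives the contradiction. I would state this reading explicitly at the start of the proof. For the exact version \eqref{MVP.harmonic.gen.P.intro}, the same counterexamples apply, because an exact identity implies the asymptotic one and all the computations on polynomials above are exact.
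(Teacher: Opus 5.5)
Your case split on the exact multiplicity $k$ has the same skeleton as the paper's proof. The paper shows $R=0$, i.e.\ $k\ge m$, and then $Q(1)\neq0$, i.e.\ $k\le m$. Your case $k>m$ is essentially the paper's argument, with $|x|^{2m}$ and Theorem~\ref{thm.MVP.polyharmonic.intro} in place of $x_1^{2m}$ and Lemma~\ref{charro.lebiedzik.raihen}.

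The gap is in the case $1\le k<m$. You correctly extract from ``but not necessarily $(m-1)$-harmonic'' that \emph{some} nontrivial solution of \eqref{AMVP.m_harmonic.gen.P.intro} is not $(m-1)$-harmonic. You then test a function of your own choosing, $|x|^{2m-2}$. Showing that this particular $m$-harmonic function is not a solution contradicts nothing in the hypothesis as stated. The ``that is to say'' clause only asserts (i) every nontrivial solution is $m$-harmonic, and (ii) some nontrivial solution is not $(m-1)$-harmonic. It does not assert that every $m$-harmonic function is a solution. So your sentence ``no $m$-harmonic function that is not $(m-1)$-harmonic satisfies the property'' does not follow from one example. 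Retreating to the two-sided ``equivalence'' reading makes the argument valid, but it adds a hypothesis and proves a weaker theorem. The paper uses the hypothesis only in the one-directional form (i)+(ii).

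The repair uses the tool you already invoke. Suppose $1\le k<m$ and $P(A_{r})[u]=o(r^{2m})$ locally uniformly. Then also $P(A_{r})[u]=o(r^{2k})$ locally uniformly, and $1$ is a root of $P$ of multiplicity exactly $k$. By Theorem~\ref{thm.MVP.polyharmonic.intro}, every solution is therefore $k$-harmonic, hence $(m-1)$-harmonic, which contradicts (ii).

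This is what the paper does. It writes $P(A_{r})=Q(A_{r})(A_{r}-I)^m+R(A_{r})$ and applies this to the given nontrivial, $m$-harmonic, non-$(m-1)$-harmonic solution, so that $R(A_{r})[u]=o(r^{2m})$. It then kills the coefficients of $R$ one at a time using Lemma~\ref{epsilon.lemma2} and Proposition~\ref{thm.mvp.mharmonic.smooth.first.piece}. This is done in the smooth case and then lifted to $L^1_{\textnormal{loc}}$ via Theorem~\ref{smooth function.proof}.

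Your second argument for $k=0$ is already of the correct type: $P(A_{r})[u]\to P(1)u$ a.e., so only the trivial solution exists, contradicting (ii). With the replacement above you no longer need the equivalence reading anywhere. The exact version \eqref{MVP.harmonic.gen.P.intro} then follows in the same way, since exact solutions are also locally uniform asymptotic solutions.
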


\begin{remark}[Local uniformity of the remainders]\label{remark.uniformity.order}  
Let us  show that the local uniformity of the remainders in Theorems \ref{thm.MVP.polyharmonic.intro} and \ref{thm.MVP.harmonic.intro.2} is necessary. We adapt a counterexample by Cheng \cite{CM}.
Let 
$\Omega = (-1,1)\times(-1,1)\subset\mathbb{R}^2,$ and define
\[
u(x,y) =
\begin{cases}
x^2 + y^2 &  \textrm{in}\ \Omega^+=\{-1<x<1,\ 0\le y<1\},\\[2mm]
x^2 - y^2 & \textrm{in}\ \Omega^-=\{-1<x<1,\ -1<y<0\}.
\end{cases}
\]
Clearly, $u$ is continuously differentiable in $\Omega$, biharmonic in $\Omega^+$, and harmonic (and hence biharmonic) in $\Omega^-$.  
However, the second derivatives have a jump across the horizontal axis, so $u$ is not biharmonic in the whole of $\Omega$.
Now, for points $(x,0)$ on the horizontal axis, Cheng \cite{CM} shows~that
$A_r[u](x,0) = x^2 + r^2/4.$
Using \eqref{pizzetti.exact} we can average once more and find
$A_r^2[u](x,0)
= A_r\big(A_r[u]\big)(x,0)
= x^2 + r^2/2.$
Hence, for all $x\in(-1,1)$ and all ${r}<\textnormal{dist}\big((x,0),\partial\Omega\big)/2$, we~have
\begin{equation}\label{counterex.exact.MVP}
(A_r-I)^2[u](x,0)
=
A_r^2[u](x,0) - 2A_r[u](x,0) + u(x,0)
= 0.
\end{equation}
A similar calculation using \eqref{pizzetti.exact}
shows that \eqref{counterex.exact.MVP} also holds for all  $(x,y)\in\Omega$ and $r$ such that $B_{2r}(x,y)$ is fully contained in one of the half-squares $\Omega^\pm$, or, equivalently, whenever
\begin{equation}\label{non.uniform.r0}
r<r_0=\frac{\min\left\{|y|,\textnormal{dist}\big((x,y),\partial\Omega\big)\right\}}{2}.
\end{equation}
Thus, \eqref{counterex.exact.MVP} is valid pointwise throughout $\Omega$ for $r$ small enough and, in particular, 
\begin{equation}\label{counterex.asymptotic.MVP}
(A_r-I)^2[u](x,y)
=
o(r^4)\quad\textrm{as $r\to0$ in $\Omega$},
\end{equation}
even though $u$ is not biharmonic in $\Omega$. This is in agreement with Theorem \ref{thm.MVP.polyharmonic.intro}, since 
\eqref{non.uniform.r0} shows that the little-$o$ in \eqref{counterex.asymptotic.MVP} is not locally uniform as required in the hypotheses.
Moreover, for points $(x,y)$ and radii $r$ such that $B_{2r}(x,y)$ intersects both $\Omega^+$ and $\Omega^-$, the expression $(A_r-I)^2[u](x,y)$ does not vanish.
Hence, although \eqref{counterex.exact.MVP} holds for all $(x,y)\in\Omega$ and all sufficiently small radii (given by \eqref{non.uniform.r0}), it does not  hold for all ${r}<\textnormal{dist}\big((x,y),\partial\Omega\big)/2$ if the point is too close to the horizontal axis, which violates the hypothesis in the theorem.
This somewhat global character of the mean-value property is an interesting feature of polyharmonic functions that is not seen in the harmonic~case. 
\end{remark}

To conclude this section, we want to emphasize a regularity result of independent interest derived from our mean-value properties in Theorem \ref{thm.MVP.polyharmonic.intro}. Namely, we show
that locally integrable functions satisfying an asymptotic mean-value property of the form $P(A_{r})[u]=o({r}^{2m})$ (with 1 a root of $P$) are smooth. In particular, by Theorem \ref{thm.MVP.polyharmonic.intro}, this implies that $m$-harmonic functions are smooth, offering a mean-value-property proof of a fact  usually established with other, less elementary tools; see Remark~\ref{remark.about.regularity} below.
\begin{theorem}[Smoothness from mean-value formulas]\label{thm.regularity}
    Let  $P$  be a non-zero polynomial with $P(1)=0$ and assume that
   $u\in L^1_\textnormal{loc} (\Omega)$  satisfies the asymptotic mean-value property \eqref{AMVP.m_harmonic.gen.P.intro}.
Then $u\in\ C^\infty(\Omega)$.
In particular, the conclusion holds when we replace \eqref{AMVP.m_harmonic.gen.P.intro} by
    \eqref{MVP.harmonic.gen.P.intro}.
\end{theorem}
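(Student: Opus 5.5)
The plan is to split according to the multiplicity $j\geq1$ of $1$ as a root of $P$, and to treat the case $j\le m$ by reduction to Theorem~\ref{thm.MVP.polyharmonic.intro} followed by a bootstrap. For $j>m$ I only have a strategy, not an argument. Write $P(t)=Q(t)(t-1)^j$ with $Q(1)\neq0$.

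\emph{Case $j\le m$.} Since $r^{2m}=o(r^{2j})$, the hypothesis \eqref{AMVP.m_harmonic.gen.P.intro} gives $P(A_r)[u]=o(r^{2j})$ locally uniformly. Theorem~\ref{thm.MVP.polyharmonic.intro}, implication (3)$\Rightarrow$(1) with $j$ in place of $m$, then shows that $u$ is $j$-harmonic. By the equivalence with \eqref{MVP.mharmonic.main.v2}, $u$ also satisfies the exact identity
\[
u(x)=\sum_{i=1}^{j}\binom{j}{i}(-1)^{i-1}A_r^i[u](x)\qquad\text{for a.e. } x \text{ and every } r<\textnormal{dist}(x,\partial\Omega)/j .
\]
The key elementary fact for the bootstrap is that $A_r$ gains one derivative. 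If $v\in L^1_{\rm loc}$, then $A_r[v]$ is continuous. If $v\in C^k$, then $A_r[v]\in C^{k+1}$, because
\[
\partial_{x_i}A_r[v](x)=\frac{n}{r}\,\mathcal{A}_r\big[v\,\nu_i\big](x),
\]
by differentiating $\int_{B_r(x)}v$ in $x$ via the divergence theorem (here $\nu$ is the outer unit normal), and the right-hand side has the regularity of $v$. Since $i\ge1$ in the identity above, the right-hand side is $C^{k+1}$ on any compact $K\Subset\Omega$ whenever $u\in C^k$ on a slightly larger set, with $r$ fixed small relative to $\textnormal{dist}(K,\partial\Omega)$. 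Induction on $k$ gives $u\in C^\infty(\Omega)$ after redefining $u$ on a null set. For the spherical average the same scheme applies, but each application of $\mathcal{A}_r$ gains only one derivative after averaging over $r$. I would therefore first integrate the identity in $r$ over a small interval to convert spherical means into solid ones.

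\emph{Case $j>m$.} Here Theorem~\ref{thm.MVP.polyharmonic.intro} gives nothing directly, since every smooth function satisfies \eqref{AMVP.m_harmonic.gen.P.intro} by Pizzetti's formula \eqref{Pizzetti.formula.volume}. What must be shown is that a nonsmooth $L^1_{\rm loc}$ function cannot. I expect this to be the main obstacle.

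My first attempt would use the local uniformity quantitatively. Expand $P(t)=\sum_k a_kt^k$ and set $E_r:=P(A_r)[u]$, so that $\|E_r\|_{L^\infty(K)}\le \epsilon r^{2m}$ for $r<r_0$. If $a_0\neq0$, then $u=-a_0^{-1}\sum_{k\ge1}a_kA_r^k[u]+a_0^{-1}E_r$. The first term is continuous, so $u\in L^\infty_{\rm loc}$. Comparing two radii $r$ and $2r$ then controls the oscillation of $u$ at scale $r$ by $O(r^{2m})$ plus differences of smooth averages, which I would aim to convert into a Campanato/Hölder estimate and then iterate on difference quotients.

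If $a_0=0$, I would instead mollify: $u_\varepsilon=u*\varphi_\varepsilon$ satisfies the same hypothesis with the same constants, because $A_r$ commutes with convolution. I would then try to obtain $\varepsilon$-uniform bounds on the derivatives of $u_\varepsilon$ from the expansion of $P(A_r)[u_\varepsilon]$. For the exact version \eqref{MVP.harmonic.gen.P.intro}, the bootstrap of the first case applies directly whenever $a_0\neq0$. When $a_0=0$, I would factor out the powers of $A_r$ and use the injectivity of $A_r$ on compactly supported data, via the Fourier transform, to reduce to a polynomial with nonzero constant term.
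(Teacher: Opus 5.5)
\textbf{Your case $j>m$ cannot be completed, because the statement is false there.} Your own remark that smooth functions satisfy \eqref{AMVP.m_harmonic.gen.P.intro} when $j>m$ extends to all of $C^{2m}$. For $w\in C^{2p}(\Omega)$, Taylor's formula together with the modulus of continuity of $D^{2p}w$ gives $(A_r-I)[w]=\sum_{k=1}^{p}c_kr^{2k}\Delta^k w+o(r^{2p})$ locally uniformly. Induction on $p$, with base case $(A_r-I)[w]=o(1)$ locally uniformly for continuous $w$, then yields $(A_r-I)^i[w]=o(r^{2p})$ locally uniformly whenever $i>p$. Hence $P(t)=(t-1)^{m+1}$ and $u(x)=|x_1|^{2m+1/2}$ satisfy \eqref{AMVP.m_harmonic.gen.P.intro}, although $u\notin C^{2m+1}$ on any $\Omega$ meeting $\{x_1=0\}$.

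The theorem must therefore be read as in the paper's proof (Section~\ref{section.regularity}), with $m$ equal to the multiplicity of the root $1$. Under that reading your first case is the entire proof, and the Campanato, difference-quotient and Fourier strategies should be dropped. The exact version \eqref{MVP.harmonic.gen.P.intro} involves no $m$ at all. If $1$ has multiplicity $j$, it trivially gives $P(A_r)[u]=o(r^{2j})$ locally uniformly, so it falls under your first case with $j$ in place of $m$, whether or not $a_0=0$.

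\textbf{There is a circular citation in the first case.} Do not pass through (3)$\Rightarrow$(1) of Theorem~\ref{thm.MVP.polyharmonic.intro}: the paper proves that implication (Section~\ref{section.general.case}) by first invoking the very smoothness result you are proving. The detour is also unnecessary. Your bootstrap only uses the exact identity \eqref{MVP.mharmonic.main.v2} at one fixed small radius. That identity follows from the $o(r^{2j})$ hypothesis directly by Theorem~\ref{thm.MVP.polyharmonic.equivalence}, whose proof (mollification plus the smooth-case results of Section~\ref{section.smooth.case}) uses no regularity of $u$.

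With this substitution your argument is complete and correct:
\begin{itemize}
\item $A_r$ maps $L^1_{\textnormal{loc}}$ into $C^0$ and $C^k$ into $C^{k+1}$; your flux formula already holds for continuous $v$, e.g.\ by uniform approximation.
\item Every term on the right of \eqref{MVP.mharmonic.main.v2} contains at least one factor $A_r$.
\item Since $r$ can be taken arbitrarily small, the shrinking of the domain at each step is harmless.
\end{itemize}
The spherical-mean remark is not needed. The statement involves only $A_r$, and a spherical hypothesis is converted into the solid identity by Theorem~\ref{thm.MVP.polyharmonic.equivalence}. Integrating in $r$ would not turn $\mathcal{A}_r^i$ into iterated solid means anyway.

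\textbf{Comparison with the paper.} The paper also starts from the exact identity supplied by Theorem~\ref{thm.MVP.polyharmonic.equivalence}. It then sets $v=\sum_{k=0}^{m-1}(I-\mathcal{A}_r)^k[u]$, so that $\mathcal{A}_r[v]=u$, and imitates the classical harmonic-function argument, claiming $u=v*\eta_\epsilon$ via polar coordinates. That computation replaces $\mathcal{A}_s[v](x)$ by $u(x)$ for every $s\in(0,\epsilon)$. But $v$ is built from one fixed radius $r<\epsilon/m$, and only $\mathcal{A}_r[v]=u$ is known. For $m\ge 2$ this fails: with $m=2$ and $u=|x|^2$ one gets $v=|x|^2-r^2$ and $\mathcal{A}_s[v]=|x|^2+s^2-r^2$. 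So that step is justified only when $m=1$. Your derivative-gain bootstrap needs nothing beyond the identity at a single radius and works for every $m$, so it is the sounder route.
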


The rest of the paper is organized as follows.
In Section \ref{section.smooth.case}, we prove Theorems \ref{thm.MVP.polyharmonic.intro} and \ref{thm.MVP.harmonic.intro.2} in the smooth case. 
In Section \ref{section.equivalence}, we prove  an equivalence relation between mean-value formulas satisfied by locally integrable functions. 
This allows us to choose the most convenient class representative in 
all subsequent proofs, and enables the passage from smooth to locally integrable functions.
In~Section~\ref{section.regularity}
we prove Theorem \ref{thm.regularity}, i.e., that locally integrable functions that satisfy a mean-value property are smooth.
Finally, in Section \ref{section.general.case} we prove Theorems \ref{thm.MVP.polyharmonic.intro} and \ref{thm.MVP.harmonic.intro.2} in the general case of locally integrable functions.


\section{Proof of Theorems \ref{thm.MVP.polyharmonic.intro} and \ref{thm.MVP.harmonic.intro.2} in the smooth case}\label{section.smooth.case}

Let us first prove Theorem \ref{thm.MVP.polyharmonic.intro} under the additional assumption that $u$ is smooth. We start by proving the  smooth version of Corollary \ref{remark.with.extra.equivalences}.

\begin{theorem}\label{thm.mvp.mharmonic.smooth}
Let $\Omega\subset\mathbb{R}^n
$ be an open set and $u\in C^\infty(\Omega)$. The following are equivalent:
\begin{enumerate}\itemsep3pt
        \item\label{theorem.smooth.harmonic} $u$ is $m$-harmonic in $\Omega$, i.e., $\Delta^mu=0$ in $\Omega$ in the classical sense.

    \item\label{theorem.smooth.simple.MVP} For  every $x\in\Omega$ and    ${r}<\textnormal{dist}(x,\partial\Omega)/m$, it holds that
\begin{equation}\label{MVP.mharmonic.main.smooth}
    (A_{r}-I)^m[u](x)=0,
    \end{equation} 
    or, equivalently,
\begin{equation*}
    u(x)=\sum_{j=1}^{m}\binom{m}{j}(-1)^{j-1}A_{r}^j[u](x).
    \end{equation*}
    
    \item\label{theorem.smooth.poly.MVP} Given $Q$, a polynomial with $Q(1)\neq0$, it holds that
\begin{equation}\label{MVP.mharmonic.turbo.smooth}
    \Big(Q(A_{r})\,(A_{r}-I)^m\Big)[u](x)=0,
    \end{equation}
      for  every $x\in\Omega$ and    ${r}<\textnormal{dist}(x,\partial\Omega)/\big(\textnormal{deg(Q)}+m\big)$, where $\textnormal{deg(Q)}$ denotes the degree of  $Q$. 
     
      \item\label{theorem.smooth.poly.AMVP}
     Given $Q$, a polynomial with $Q(1)\neq0$, for  every $x\in\Omega$ it holds that 
\begin{equation*}
    \Big(Q(A_{r})\,(A_{r}-I)^m\Big)[u](x)=o({r}^{2m})
    \end{equation*}
as ${r}\to0$ locally uniformly in $\Omega$. In particular, \eqref{MVP.mharmonic.main.smooth} holds with remainder $o({r}^{2m})$ as $r\to0.$

\end{enumerate}
The above equivalences also hold with the spherical average $\mathcal{A}_{r}$ in place of $A_{r}$.
\end{theorem}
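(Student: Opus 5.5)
The plan is to go around the circle (1)$\Rightarrow$(2)$\Rightarrow$(3)$\Rightarrow$(4)$\Rightarrow$(1), using throughout the exact Pizzetti formula \eqref{pizzetti.exact} from Appendix~\ref{appendix.pizzetti.formulas}. For a polyharmonic $v$ (so the series terminates) it reads $A_r[v](x)=\sum_{k\ge0}c_k r^{2k}\Delta^k v(x)$, with $c_0=1$. In operator form, on smooth functions whose iterated Laplacians are controlled, this says $A_r=\Phi(r^2\Delta)$ with $\Phi(t)=\sum_k c_k t^k$. The spherical case is identical with coefficients $d_k$ in place of $c_k$, so I will only treat $A_r$.

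\textbf{(1)$\Rightarrow$(2).} Let $u$ be $m$-harmonic. The functions $\Delta^j u$ are then $(m-j)$-harmonic and commute with $A_r$, since $A_r$ is convolution with a normalized indicator. Writing $A_r-I=\Psi(r^2\Delta)$ with $\Psi(t)=\Phi(t)-1=t\,\Phi_1(t)$, I get the factorization $A_r-I=r^2\Delta\circ\Phi_1(r^2\Delta)$ on polyharmonic functions. Iterating $m$ times, $(A_r-I)^m[u]=r^{2m}\Phi_1(r^2\Delta)^m\Delta^m u=0$.

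The domain requirement $r<\operatorname{dist}(x,\partial\Omega)/m$ is what makes this legitimate. Each application of $A_r$ needs the previous iterate to be defined on $B_r$ of the current point. The exact Pizzetti identity holds on every ball inside $\Omega$, and $A_r$ commutes with $\Delta$ on the shrunken domain $\{\operatorname{dist}>r\}$. So all iterates are polyharmonic on nested domains, and $m$ steps use up a distance of $mr$.

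\textbf{(2)$\Rightarrow$(3)} is immediate. Apply $Q(A_r)$ to the identity $(A_r-I)^m[u]=0$, which holds on $\{\operatorname{dist}>mr\}$. The extra $\deg Q$ averages fit under $r<\operatorname{dist}/(\deg Q+m)$.

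\textbf{(3)$\Rightarrow$(4)} is trivial, since an exact zero is $o(r^{2m})$ uniformly.

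\textbf{(4)$\Rightarrow$(1)} is the substantive step. Take $u\in C^\infty$ and a compact $K$. Taylor-expand via the Pizzetti formula \eqref{Pizzetti.formula.volume} to order $2m$ at every point of a slightly larger compact set. The remainder is $o(r^{2m})$ uniformly, because $u\in C^{2m+2}$ and the remainder is bounded by $Cr^{2m+2}\sup|D^{2m+2}u|$. Composing $k$ averages gives $A_r^k[u]=\Phi(r^2\Delta)^k u+O(r^{2m+2})$ uniformly on $K$, where the power series is truncated at degree $m$ in $r^2$. Hence
\[
\bigl(Q(A_r)(A_r-I)^m\bigr)[u](x)=Q(1)\,c_1^m\, r^{2m}\,\Delta^m u(x)+O(r^{2m+2}).
\]
All lower orders vanish because $(\Phi-1)^m$ starts at $t^m$, and the leading coefficient is $Q(\Phi(0))\,c_1^m=Q(1)c_1^m$. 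Since $Q(1)\ne0$ and $c_1\ne0$, dividing by $r^{2m}$ and letting $r\to0$ gives $\Delta^m u=0$ on $K$.

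I expect the main obstacle to be the careful justification of the composition of truncated expansions. Concretely, one must show that applying $A_r$ to $w(y)=\sum_{j\le m}a_j r^{2j}\Delta^j u(y)+R(y,r)$ with $\sup_y|R|=O(r^{2m+2})$ keeps the remainder of the same order. This works because $A_r$ is a contraction in sup norm, and because each $\Delta^j u$ is smooth, so its own Pizzetti expansion has uniform remainder on compacts. This sup-norm contraction argument also explains why local uniformity is needed when passing to the non-smooth case.
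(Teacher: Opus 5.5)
Your proposal is correct and follows the same cycle as the paper, and (2)$\Rightarrow$(3)$\Rightarrow$(4) is handled identically. In (1)$\Rightarrow$(2), your factorization $A_r-I=\Psi(r^2\Delta)=r^2\Delta\,\Phi_1(r^2\Delta)$ on polyharmonic functions is the paper's strong induction of Proposition~\ref{thm.mvp.mharmonic.smooth.second.piece} (applied to the $k$-harmonic functions $\Delta^{m-k}u$) written in closed form, and your domain bookkeeping is sound.

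The genuine difference is in (4)$\Rightarrow$(1). The paper inducts on $m$. Lemma~\ref{epsilon.lemma2} extracts only the leading term $c_1^j r^{2j}\Delta^j v$ of $(A_r-I)^j[v]$, and Lemma~\ref{epsilon.lemma1} carries the $o(r^{2m})$ remainders through $Q(A_r)(A_r-I)^{m-1}$. One then deduces that $\Delta u$ satisfies the order-$(m-1)$ property, with the base case resting on $Q(A_r)[\Delta u]\to Q(1)\Delta u$. You instead expand the whole operator at once via the formal series $Q(\Phi(t))(\Phi(t)-1)^m=Q(1)c_1^m t^m+O(t^{m+1})$, which gives
\[
\bigl(Q(A_r)(A_r-I)^m\bigr)[u]=Q(1)\,c_1^m\,r^{2m}\,\Delta^m u+O(r^{2m+2})
\]
locally uniformly.

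The composition step you flag does go through, for three reasons:
\begin{itemize}
\item $\sup_K|A_r[R]|\le\sup_{K_r}|R|$, where $K_r$ is the closed $r$-neighborhood of $K$.
\item Each $\Delta^j u$ has its own locally uniform Pizzetti expansion.
\item The finitely many discarded terms of degree larger than $m$ in $r^2$ are $O(r^{2m+2})$, since every $\Delta^l u$ is bounded on compacts.
\end{itemize}
For the $O(r^{2m+2})$ Pizzetti remainder itself, expand to Taylor order $2m+1$ and use that odd monomials average to zero. The locally uniform $o(r^{2m})$ remainder of Lemma~\ref{appendix.lemma.pizzetti} would also suffice for your conclusion.

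Your route yields an explicit asymptotic formula that contains Lemma~\ref{epsilon.lemma2} as the case $Q\equiv1$. It also makes the roles of $Q(1)\neq0$ and $c_1\neq0$ (resp.\ $d_1=1/(2n)$ for spheres) immediately visible. The paper's induction is more economical, since it never tracks products of truncated series beyond their leading coefficient.
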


\begin{remark}\label{remark.about.regularity}
Smoothness of $m$-harmonic functions is typically established via elliptic regularity theory, potential-theoretic representations, or Green’s function methods;  see, for instance, \cite{Aronszajn.et.al.1983}. We choose a different approach: we will derive the smoothness of $m$-harmonic functions directly from the mean-value formulas in  Section \ref{section.regularity}; see Theorem \ref{thm.regularity}.
\end{remark}

To prove the equivalences of the statements in Theorem \ref{thm.mvp.mharmonic.smooth}, we will first start by showing that  \eqref{theorem.smooth.harmonic} implies~\eqref{theorem.smooth.simple.MVP} in Proposition \ref{thm.mvp.mharmonic.smooth.second.piece} below. Then, 
when we  assume ${r}<\textnormal{dist}(x,\partial\Omega)/\big(\textnormal{deg(Q)}+m\big)$,
all iterated averages $A_{r}^j$ with $j\leq\textnormal{deg(Q)}+m$ are well-defined, and 
we can apply the operator $Q(A_{r})$ to both sides of \eqref{MVP.mharmonic.main.smooth} to get \eqref{MVP.mharmonic.turbo.smooth}, proving that statement \eqref{theorem.smooth.simple.MVP} implies \eqref{theorem.smooth.poly.MVP}. In turn, statement \eqref{theorem.smooth.poly.MVP} immediately implies  \eqref{theorem.smooth.poly.AMVP}. Finally, we will prove that \eqref{theorem.smooth.poly.AMVP}
implies \eqref{theorem.smooth.harmonic} in Proposition~\ref{thm.mvp.mharmonic.smooth.first.piece} below.

The arguments below hold verbatim with the spherical average $\mathcal{A}_{r}$ in place of $A_{r}$.
The reason is that the key element of the arguments is Pizzetti's formula, and the formulas for the ball and the sphere, respectively, \eqref{2.series.expansion.mvp.ball} and \eqref{2.series.expansion.mvp.surface}, only differ on the coefficients, which are irrelevant to our computations.

Let show that statement
\eqref{theorem.smooth.harmonic} implies \eqref{theorem.smooth.simple.MVP}.

\begin{proposition}[\eqref{theorem.smooth.harmonic}$\Longrightarrow$\eqref{theorem.smooth.simple.MVP}]\label{thm.mvp.mharmonic.smooth.second.piece}
    Let $\Omega\subset\mathbb{R}^n
$ be open and $u\in C^\infty(\Omega)$ be $m$-harmonic in $\Omega$.
    Then, \eqref{MVP.mharmonic.main.smooth} holds for  every $x\in\Omega$ and    ${r}<\textnormal{dist}(x,\partial\Omega)/m$.
\end{proposition}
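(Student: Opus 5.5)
We would prove this using the \emph{exact} form of Pizzetti's formula for polyharmonic functions (the identity the paper refers to as \eqref{pizzetti.exact} in the appendix). For $v\in C^\infty$ with $\Delta^{N}v=0$ in a neighborhood of $\overline{B_r(x)}$, the expansion \eqref{Pizzetti.formula.volume} terminates with no remainder:
\[
A_r[v](x)=v(x)+\sum_{k=1}^{N-1}c_k\,\Delta^k v(x)\,r^{2k}.
\]
This holds because the spherical mean $t\mapsto\mathcal{A}_t[v](x)$ satisfies the radial ODE obtained from Green's identity, $\frac{d}{dt}\mathcal{A}_t[v](x)=\frac{t}{n}A_t[\Delta v](x)$. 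Induction on $N$ then gives a polynomial in $t$ with no remainder. In operator form, for fixed $r$,
\[
(A_r-I)[v]=L_r[v],\qquad L_r:=\sum_{k=1}^{N-1}c_k r^{2k}\Delta^k=\Delta\circ R_r,
\]
where $R_r$ is a polynomial in $\Delta$ with constant coefficients. This identity holds at every point $y$ with $B_r(y)\subset$ the region where $\Delta^N v=0$.

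The main step is an induction on $j=1,\dots,m$. The claim is that
\[
(A_r-I)^j[u]=\Delta^j R_r^{\,j}[u]
\]
at every $x$ with $jr<\textnormal{dist}(x,\partial\Omega)$. For $j=1$ this is the formula above with $v=u$, $N=m$. For the inductive step, set $w=(A_r-I)^{j-1}[u]$, which on $\Omega_{(j-1)r}=\{x:\textnormal{dist}(x,\partial\Omega)>(j-1)r\}$ equals $\Delta^{j-1}R_r^{j-1}[u]$. Since $\Delta$ commutes with constant-coefficient operators, $w$ is smooth there and $\Delta^m w=\Delta^{j-1}R_r^{j-1}\Delta^m u=0$. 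If $jr<\textnormal{dist}(x,\partial\Omega)$, then $\overline{B_r(x)}\subset\Omega_{(j-1)r}$ (up to an open-neighborhood detail handled by a strict inequality), so the exact Pizzetti formula applies to $w$ at $x$. It gives $(A_r-I)[w](x)=\Delta R_r[w](x)=\Delta^jR_r^j[u](x)$. Taking $j=m$ yields $(A_r-I)^m[u]=R_r^m\Delta^m u=0$ for $r<\textnormal{dist}(x,\partial\Omega)/m$. Expanding the binomial gives the equivalent form.

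The delicate point, which we expect to be the main obstacle, is the bookkeeping of domains. Each iterate is only defined and polyharmonic on a shrunken set $\Omega_{(j-1)r}$. The exact Pizzetti formula requires polyharmonicity on the whole ball $B_r(x)$, not just at $x$. So we must check that the hypothesis $r<\textnormal{dist}(x,\partial\Omega)/m$ propagates: points $y\in B_r(x)$ satisfy $\textnormal{dist}(y,\partial\Omega)>(m-1)r$. The spherical case is identical, using the spherical Pizzetti coefficients $d_k$ in place of $c_k$; only the coefficients of $R_r$ change.
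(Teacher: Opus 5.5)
Your proof is correct, but it is organized differently from the paper's.

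\textbf{The paper's route.} It proceeds by strong induction on the order $m$. It uses the exact Pizzetti formula to write $(A_r-I)[u]=\sum_{k=1}^{m-1}c_kr^{2k}\Delta^k u$ and applies $(A_r-I)^{m-1}$ termwise. It then kills each term $(A_r-I)^{m-1}[\Delta^k u]$ by invoking the proposition itself for the $(m-k)$-harmonic function $\Delta^k u$.

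\textbf{Your route.} You fix the class of $m$-harmonic functions and observe that on it $A_r-I$ acts as the constant-coefficient operator $\Delta R_r$, which maps the class into itself. Iterating gives the explicit identity $(A_r-I)^j[u]=\Delta^jR_r^j[u]$. No induction on the order is needed, and the exact Pizzetti formula is used only at the single order $m$.

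\textbf{What each buys.} Your route yields a closed formula: $(A_r-I)^j[u]$ is an exact polynomial in $r^2$ with lowest-order term $c_1^j r^{2j}\Delta^j u$, i.e.\ an exact version of Lemma~\ref{epsilon.lemma2} for polyharmonic $u$. It also makes the treatment of domains explicit, which the paper leaves implicit. The paper's version is shorter to write. However, it needs the result for all lower orders. It also tacitly requires the same triangle-inequality check at every point reached by the iterated averages.

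\textbf{On the domain bookkeeping.} The step you flag as the main obstacle is easier than you suggest. The function $\Delta^{j-1}R_r^{j-1}[u]$ is smooth and $m$-harmonic on all of $\Omega$. So the exact Pizzetti formula applies to it at $x$ directly in $\Omega$, since $r<\textnormal{dist}(x,\partial\Omega)$; you do not need to apply it on the shrunken set. All that remains is that it agrees with $w$ on $B_r(x)$, and this follows from $\textnormal{dist}(y,\partial\Omega)>jr-r=(j-1)r$ for $y\in B_r(x)$.
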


\begin{proof}
 We proceed by strong induction.
 The base case, $m=1$,  is the  well-known mean-value property for harmonic functions, which can be proved in many  ways.
One possibility is to use the fact that $\Delta u=0$ along with Pizzetti's formula \eqref{pizzetti.exact} to get
$(A_{r}-I)[u](x)=0$
for all ${r}<\textnormal{dist}(x,\partial\Omega)$. 

 As a strong induction hypothesis, assume that for every $1\leq k\leq m-1$ it holds:
 \begin{equation}\label{ind.ass}
 \begin{split}
 \textrm{Whenever $v\in C^\infty(\Omega)$ is $k$-harmonic  in $\Omega$, then}\qquad\\
 (A_{r}-I)^{k}[v](x)=0
 \textrm{ for  all $x\in\Omega$ and    ${r}<\textnormal{dist}(x,\partial\Omega)/k$.}
 \end{split}
 \end{equation}
 We wish to show that this assumption implies: If $\Delta^m u = 0$ then $(A_r-I)^m[u](x) = 0$. 
 
 Consider $x\in\Omega$ such that  ${r}<\textnormal{dist}(x,\partial\Omega)/m$. 
 Since $\Delta^m u = 0$, Pizzetti's formula \eqref{pizzetti.exact} has finitely many terms:
 \begin{equation}\label{mvp.mlap.basic.proof.1}
 (A_r - I)[u] = c_1r^2\Delta u + c_2r^4
 \Delta^2 u + \cdots + c_{m-1}r^{2m-2}\Delta^{m-1} u.
 \end{equation}
 Moreover, if $\Delta^m u = 0$, then 
 $\Delta^{m-k} u$ is  $k$-harmonic for every $1\leq k\leq m-1$,
 which, by the strong induction hypothesis \eqref{ind.ass} gives
 \[
(A_{r}-I)^{k}\big[\Delta^{m-k} u\big](x)=0
\quad\textrm{for every $1\leq k\leq m-1$.} 
 \]
 Using this in combination with \eqref{mvp.mlap.basic.proof.1} implies 
\[
\begin{split}
(A_{r}-I)^{m}[u](x)
&=(A_{r}-I)^{m-1}\big[(A_{r}-I)[u]\big](x)\\
&=c_{1}{r}^{2}(A_{r}-I)^{m-1}[\Delta u](x) +c_{2}{r}^{4}(A_{r}-I)(A_{r}-I)^{m-2}[\Delta^{2}u](x)
\\
&\hspace{65pt} +\cdots+c_{m-1}{r}^{2m-2}(A_{r}-I)^{m-2}(A_r - I)[\Delta^{m-1}u](x)=0,
\end{split}
\]
 which completes the inductive step.
\end{proof}

Next, we show that  \eqref{theorem.smooth.poly.AMVP} implies \eqref{theorem.smooth.harmonic}.
We will need the following two lemmas.

\begin{lemma} \label{epsilon.lemma1}
Consider an open set $\Omega\subset\mathbb{R}^n$ and
    let $k \ge 0$.  Assume $v(x,r)=o({r}^{k})$  as ${r} \to 0$ locally uniformly in $\Omega$. Then, for every real, nonzero polynomial $P$ of degree $p\geq1$, we have that
\begin{equation}\label{P(A).little.o}
P(A_{r})[v](x)=o({r}^{k})\quad \textrm{as ${r} \to 0$ locally uniformly in $\Omega$.}
\end{equation}
In particular,
    
    \begin{enumerate}\itemsep3pt
        
        \item[(a)] $A_{r}^{j}[v](x)=o({r}^{k})$
         as ${r} \to 0$
         locally uniformly in $\Omega$
        for all $j \ge 1$.
        
        \item[(b)] $\big(Q(A_{r})(A_{r}-I)^{j}\big)[v](x)=o({r}^{k})$  as ${r} \to 0$ locally uniformly in $\Omega$, for all $j \ge 1$, and  every polynomial $Q$ with $Q(1) \ne 0$.
        
    \end{enumerate}
\end{lemma}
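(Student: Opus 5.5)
The plan is to reduce everything to the single operator $A_r$ acting on a locally uniform $o(r^k)$ quantity; the general polynomial then follows by linearity and induction on powers.

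\textbf{Step 1: one average preserves the remainder.} Fix a compact $K\subset\Omega$. Let $\delta=\tfrac12\textnormal{dist}(K,\partial\Omega)$, or $\delta=1$ if $\Omega=\mathbb{R}^n$, and set
\[
K_\delta=\{y:\textnormal{dist}(y,K)\le\delta\}\subset\Omega,
\]
which is again compact. Given $\epsilon>0$, the hypothesis applied on $K_\delta$ yields $r_0>0$ with
\[
\operatorname*{ess\,sup}_{y\in K_\delta}|v(y,r)|\le\epsilon r^k \quad\text{for all } 0<r<r_0 .
\]
For $x\in K$ and $r<\min\{r_0,\delta\}$ we have $B_r(x)\subset K_\delta$. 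Therefore
\[
|A_r[v](x,r)|\le\dashint_{B_r(x)}|v(y,r)|\,dy\le\epsilon r^k .
\]
Here the averaging is in $y$ with the same radius $r$ that appears in $v(\cdot,r)$. The essential supremum bound is exactly what is needed, because the average integrates over a set of positive measure, so null sets are irrelevant. Hence $A_r[v]=o(r^k)$ locally uniformly in $\Omega$. The same argument works for $\mathcal{A}_r$, with one caveat: a bound holding only almost everywhere does not automatically control integrals over spheres. In that case I would either use that spherical means are defined for a.e.\ configuration via Fubini, or simply note that the $v$ arising later are continuous, so there sup and ess sup coincide.

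\textbf{Step 2: iterate and combine.} Applying Step 1 repeatedly, with $A_r^{j-1}[v]$ in place of $v$, gives $A_r^j[v]=o(r^k)$ locally uniformly for every $j\ge1$. This is part (a). The identity term $a_0 v$ is $o(r^k)$ by hypothesis. A finite linear combination of locally uniform $o(r^k)$ terms is again locally uniform $o(r^k)$: on a given $K$, use $\epsilon/(1+\sum_k|a_k|)$ and the minimum of finitely many thresholds $r_0$. This yields \eqref{P(A).little.o}. Part (b) is the special case of the polynomial $Q(t)(t-1)^j$, which is nonzero because $Q\neq0$ when $Q(1)\neq0$.

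\textbf{Main obstacle.} The only delicate point is ensuring that the iterated averages are well defined near $K$. For $A_r^j$ one needs $B_{jr}(x)\subset\Omega$, which I handle by shrinking $r<\delta/p$ and inflating $K$ by $j\delta/p$ at each step. This keeps the whole chain of balls inside a fixed compact subset of $\Omega$, so a single application of the uniform hypothesis on that compact set suffices.
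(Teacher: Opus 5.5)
Your proof is correct and follows the same route as the paper: integrate the uniform bound over a ball to pass it to $A_r[v]$, induct on the power $j$ for part (a), and combine finitely many terms using the minimum of the thresholds $r_0$, with (b) as the special case $P(t)=Q(t)(t-1)^j$. Enlarging $K$ to the compact neighborhood $K_\delta$ is a genuine refinement, since the paper integrates a bound known only on $K$ over balls $B_r(x)$ that may leave $K$.
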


\begin{proof}
Let us first prove statement (a). We proceed by induction.
Let $K$ be a compact subset of $\Omega$, and let $\epsilon>0$. By Definition \ref{loc_unif}, there exists ${r}_0$  such that 
\begin{equation*}
-\epsilon\,{r}^{k}
<v(x,r)
<\epsilon\,{r}^{k}
\end{equation*}
for every ${r}<{r}_0$ and all $x\in K$.
 Since $r_0$, and hence $r$, are independent of the point $x$, we can integrate over a ball  and deduce 
\begin{equation*}
-\epsilon\,{r}^{k}
<
A_{r}[v](x)
<\epsilon\,{r}^{k}
\end{equation*}
for every ${r}<{r}_0$ and all $x\in K$,
which proves statement (a) in the base case $j=1$.
For the induction hypothesis, assume that  there exists ${r}_1$  such that 
\[
-\epsilon\,{r}^{k}
<
A_{r}^{j-1}[v](x)<\epsilon\,{r}^{k}
\]
for every ${r}<{r}_1$ and all $x\in K$.
Integrating this inequality over a ball, and using that $A_{r}^{j}[v](x)=A_{r}\big[A_{r}^{j-1}[v]\big](x)$, we find
\[
-\epsilon\,{r}^{k}
<
A_{r}^{j}[v](x)<\epsilon\,{r}^{k}
\]
for every ${r}<{r}_1$ and all $x\in K$,
 which completes the proof of statement (a).

Let us now prove \eqref{P(A).little.o}, which in turn implies statement (b).
We can write
\[
    P(A_{r})[v](x)= \sum_{j=0}^{p}a_{j}\,A_{r}^j[v](x)
\]
for some $a_0,a_1,\ldots a_p\in\mathbb{R}$. 
As before, let $K$ be a compact subset of $\Omega$, and let $\epsilon>0$.
 By statement~(a), for each $j \ge 1$,
 there exist  ${r}_j$  such that 
$|A_{r}^{j}[v](x)|<\epsilon\,{r}^{k}$
for every ${r}<{r}_j$ and all $x\in K$. Take $\tilde{r}=\min\{r_0,r_1,\ldots,r_p\}$.
 We deduce
\[
 \big|P(A_{r})[v](x)\big|
 \leq
 \sum_{j=0}^{p}|a_{j}|\cdot\big|A_{r}^j[v](x)\big|
<\epsilon\, {r}^{k}\,\sum_{j=0}^{p}|a_{j}|
\]
for every ${r}<\tilde{r}$ and all $x\in K$,
which completes the proof of Lemma \ref{epsilon.lemma1}.
\end{proof}

\begin{lemma} \label{epsilon.lemma2}
Let $\Omega\subset\mathbb{R}^n$ be open, and $j$ a positive integer.
Then, for all $v \in C^{\infty}(\Omega)$, we have 
    \[
(A_{r}-I)^{j}[v](x)
=\frac{\Delta^{j}v(x)}{2^j(n+2)^j} \;r^{2j}+o({r}^{2j})
\]
as ${r} \to 0$ locally uniformly in $\Omega$.
In particular,
    \[
(A_{r}-I)^{j}[v](x)
=o({r}^{2j-2})
    \]
as ${r} \to 0$ locally uniformly in $\Omega$.
\end{lemma}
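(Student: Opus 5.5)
The plan is to prove, by induction on $j$, the slightly stronger statement
\[
(A_{r}-I)^{j}[v](x)=c_1^{\,j}\,\Delta^{j}v(x)\,r^{2j}+O(r^{2j+2})
\]
uniformly for $x$ in any compact $K\subset\Omega$, for every $v\in C^\infty(\Omega)$. Here $c_1=1/(2(n+2))$ is the first Pizzetti coefficient in \eqref{Pizzetti.formula.volume}, so $c_1^{\,j}=1/(2^j(n+2)^j)$. An $O(r^{2j+2})$ error is in particular $o(r^{2j})$ locally uniformly, and the second claim of the lemma follows at once, since $c_1^{\,j}r^{2j}\Delta^j v$ is $O(r^{2j})$ uniformly on compacts.

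\textbf{Quantitative Pizzetti formula.} Fix $K$ compact and $\delta>0$ with $K_\delta=\{x:\textnormal{dist}(x,K)\le\delta\}\subset\Omega$. Taylor-expand $v$ to order $2N+1$ around $x$ and integrate over $B_r(x)$; the odd-order terms vanish by symmetry. This gives, for $x\in K$ and $r<\delta$,
\[
(A_r-I)[v](x)=\sum_{k=1}^{N}c_k\,r^{2k}\Delta^k v(x)+R_N(x,r),\qquad |R_N(x,r)|\le C\,r^{2N+2},
\]
where $C$ depends only on $\sup_{K_\delta}|D^{2N+2}v|$. This is the uniform version of \eqref{Pizzetti.formula.volume}. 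I will use it with $N=j$, on a compact set enlarged by $(j-1)\delta$ so that the outer averages stay inside the region where the bound holds.

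\textbf{Inductive step.} The base case $j=1$ is exactly the formula above with $N=1$. For the inductive step, write $(A_r-I)^j=(A_r-I)^{j-1}(A_r-I)$ and insert the expansion with $N=j$. The parameter $r$ is fixed and the operators are linear, so the powers $r^{2k}$ pull out:
\[
(A_r-I)^j[v]=c_1r^2(A_r-I)^{j-1}[\Delta v]+\sum_{k=2}^{j}c_kr^{2k}(A_r-I)^{j-1}[\Delta^kv]+(A_r-I)^{j-1}[R_j(\cdot,r)].
\]
Each term is handled as follows.
\begin{itemize}
\item By the induction hypothesis applied to $\Delta v$, the first term equals $c_1^{\,j}r^{2j}\Delta^jv+O(r^{2j+2})$.
\item By the induction hypothesis applied to $\Delta^k v$, the factor $(A_r-I)^{j-1}[\Delta^kv]$ is $O(r^{2j-2})$. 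Multiplying by $r^{2k}$ with $k\ge2$ makes each of these terms $O(r^{2j+2})$.
\item The operator $A_r-I$ has sup-norm at most $2$ on bounded functions, since averages do not increase the sup. Hence the remainder is bounded by $2^{j-1}C\,r^{2j+2}$, as long as the sup is taken over the enlarged compact set $K_{(j-1)\delta}$ with $r<\delta$.
\end{itemize}
All constants depend only on finitely many derivatives of $v$ on a fixed compact subset of $\Omega$, so the bounds are uniform in $x\in K$.

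\textbf{Main obstacle.} I expect the delicate point to be the remainder bookkeeping. A crude one-step expansion with $N=1$ only gives $(A_r-I)^{j-1}[R_1]=O(r^4)$, which is useless for $j\ge2$. The Pizzetti expansion must therefore be taken to order $N=j$, with its remainder controlled uniformly on a neighbourhood of $K$ large enough to contain every ball $B_{jr}(x)$ visited by the iterated averages.
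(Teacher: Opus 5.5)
Your proposal is correct and follows essentially the same route as the paper: induction on $j$, writing $(A_r-I)^j=(A_r-I)^{j-1}(A_r-I)$, inserting the locally uniform Pizzetti expansion to order $j$, applying the inductive hypothesis to $\Delta^k v$, and absorbing the remainder because iterated averaging does not enlarge a uniformly small function (the role of the paper's Lemma \ref{epsilon.lemma1}). Your version is slightly sharper, giving $O(r^{2j+2})$ rather than $o(r^{2j})$, and it is more careful than the paper in enlarging the compact set to $K_{(j-1)\delta}$ so that every ball visited by the iterated averages stays where the remainder bound holds, a point that the paper's Lemma \ref{epsilon.lemma1} leaves implicit.
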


\begin{proof}
 We proceed by induction. The base case, 
$j=1$, follows from Lemma \ref{appendix.lemma.pizzetti}, i.e., for any  $v \in C^{\infty}(\Omega)$,
it holds that
\begin{equation*}
(A_{r}-I)[v](x)
=c_{1}\,{r}^{2}\Delta v(x)+ o({r}^{2})
\end{equation*}
as $r\to0$ locally uniformly in $\Omega$,
with $c_1
=
1/\big(2\,(n+2)\big).
$
For the inductive hypothesis, let us assume that
for every  $v \in C^{\infty}(\Omega)$, we have
\begin{equation*}
\begin{split}
(A_{r}-I)^{j-1}[v](x)
=c_{1}^{j-1}{r}^{2j-2}\Delta^{j-1}v(x)+o({r}^{2j-2})
\end{split}
\end{equation*}
as ${r} \to 0$ locally uniformly in $\Omega$. In particular, we can take
\[
u(x)
=
\sum_{k=1}^{j}c_k\,\Delta^{k}v(x)\,r^{2k},
\]
which, by Lemma \ref{appendix.lemma.pizzetti},  has the important property that
\[
(A_{r}-I)[v](x)=u(x)+o(r^{2j})
\]
as ${r} \to 0$ locally uniformly in $\Omega$.
Therefore, using  the induction hypothesis and Lemma \ref{epsilon.lemma1} to handle the remainder, we obtain
\begin{equation*}
 \begin{split}
(A_{r}-I)^{j}[v](x)&=
(A_{r}-I)^{j-1}[u](x)+o(r^{2j})\\
&=
\sum_{k=1}^{j}c_k\,r^{2k}\,(A_{r}-I)^{j-1}\big[\Delta^{k}v\big](x)+o(r^{2j})\\
&=
c_{1}^{j}{r}^{2j}\Delta^{j}v(x)
+
\sum_{k=2}^{j}c_k\,
c_{1}^{j-1}{r}^{2k+2j-2}\Delta^{k+j-1}v(x)+o({r}^{2j})
\\
&=c_{1}^{j}{r}^{2j}\Delta^{j}v(x)+o({r}^{2j}),
\end{split}
\end{equation*}
as ${r} \to 0$ locally uniformly in $\Omega$,
which completes the proof.
\end{proof}

\begin{proposition}[\eqref{theorem.smooth.poly.AMVP}$\Longrightarrow$\eqref{theorem.smooth.harmonic}]\label{thm.mvp.mharmonic.smooth.first.piece}
    Let $\Omega\subset\mathbb{R}^n
$ be open, and $Q$ a polynomial with $Q(1)\neq0$. Let $m$ be a positive integer. Then, whenever
$u\in C^\infty(\Omega)$
is such that
\begin{equation}\label{AMVP.mharmonic.turbo.smooth.propo}
    \Big(Q(A_{r})\,(A_{r}-I)^m\Big)[u](x)=o({r}^{2m})
    \end{equation}
as ${r}\to0$ locally uniformly in $\Omega$,
 the function $u$ must be $m$-harmonic in $\Omega$, i.e., $\Delta^mu=0$ in $\Omega$.
\end{proposition}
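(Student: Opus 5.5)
The idea is to use Lemma \ref{epsilon.lemma2} to extract the leading term of $(A_{r}-I)^m[u]$, and then show that $Q(A_{r})$ acts on it, to leading order, as multiplication by the nonzero constant $Q(1)$.

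First, by Lemma \ref{epsilon.lemma2} with $j=m$ and $v=u$, we have
\[
(A_{r}-I)^m[u](x)=c_1^m\,r^{2m}\Delta^m u(x)+w(x,r),
\]
where $w=o(r^{2m})$ locally uniformly in $\Omega$. Applying $Q(A_{r})$ to both sides is legitimate for small $r$ on compact sets. Lemma \ref{epsilon.lemma1} gives $Q(A_{r})[w]=o(r^{2m})$ locally uniformly, so
\[
\Big(Q(A_{r})(A_{r}-I)^m\Big)[u](x)=c_1^m r^{2m}\,Q(A_{r})[\Delta^m u](x)+o(r^{2m}).
\]
The factor $r^{2m}$ is constant in the spatial variable, so it commutes with the averages.

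Next, write $Q(t)=Q(1)+(t-1)\tilde Q(t)$. Then
\[
Q(A_{r})[\Delta^m u]=Q(1)\,\Delta^m u+\tilde Q(A_{r})(A_{r}-I)[\Delta^m u].
\]
Since $\Delta^m u\in C^\infty$, Lemma \ref{epsilon.lemma2} with $j=1$ shows that $(A_{r}-I)[\Delta^m u]=o(1)$ locally uniformly; in fact it is $O(r^2)$. Lemma \ref{epsilon.lemma1} with $k=0$ then gives $\tilde Q(A_{r})(A_{r}-I)[\Delta^m u]=o(1)$ locally uniformly. If $\tilde Q$ is constant, this step is immediate. Combining everything,
\[
\Big(Q(A_{r})(A_{r}-I)^m\Big)[u](x)=c_1^m Q(1)\,\Delta^m u(x)\,r^{2m}+o(r^{2m}).
\]

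Comparing with the hypothesis \eqref{AMVP.mharmonic.turbo.smooth.propo}, dividing by $r^{2m}$ and letting $r\to0$ yields $c_1^m Q(1)\Delta^m u(x)=0$ for every $x\in\Omega$. Since $c_1\neq0$ and $Q(1)\neq0$, this gives $\Delta^m u=0$.

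The only delicate point is the uniformity bookkeeping. Applying $Q(A_{r})$ requires the remainder bounds to hold uniformly on a compact neighborhood of the given compact set, because averaging enlarges the support by $\deg(Q)\,r$. This is handled by working on a slightly larger compact set $K'\subset\Omega$ and taking $r$ small, exactly as in Lemma \ref{epsilon.lemma1}. The rest is routine.
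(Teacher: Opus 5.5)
Your proof is correct, but it is organized differently from the paper's. The paper argues by induction on $m$. Its base case $m=1$ is essentially your last step: Pizzetti plus $Q(A_{r})[\Delta u](x)\to Q(1)\Delta u(x)$. The inductive step expands $(A_{r}-I)[u]=\sum_{k=1}^{m}c_k r^{2k}\Delta^k u+o(r^{2m})$ and applies $Q(A_{r})(A_{r}-I)^{m-1}$ termwise. It uses Lemma \ref{epsilon.lemma2} only in its weak form $o(r^{2m-4})$, to discard the terms with $k\geq2$. It then divides by $c_1r^2$ and applies the induction hypothesis to $\Delta u$.

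You instead use Lemma \ref{epsilon.lemma2} at full strength with $j=m$. That lemma already contains the induction, since its proof carries the leading coefficient $c_1^{j}$ from step to step. As a result you need the $Q(1)$-argument only once, through the splitting $Q(t)=Q(1)+(t-1)\tilde Q(t)$ and Lemma \ref{epsilon.lemma1} with $k=0$.

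Your route buys brevity and a sharper conclusion, namely the locally uniform expansion $\big(Q(A_{r})(A_{r}-I)^m\big)[u]=c_1^mQ(1)\,\Delta^m u\,r^{2m}+o(r^{2m})$. This makes the role of $Q(1)\neq0$ transparent and shows why the argument yields nothing when $Q(1)=0$. It also explains the sharpness of the order $r^{2m}$ in Corollary \ref{corollary.optimal.order}. The paper's induction mirrors Proposition \ref{thm.mvp.mharmonic.smooth.second.piece} and reduces matters to the harmonic case. But since it relies on Lemma \ref{epsilon.lemma2} anyway, nothing is lost by your direct approach.

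Your closing remark on uniformity is also well taken. The proof of Lemma \ref{epsilon.lemma1} integrates a bound that holds on $K$ over balls $B_{r}(x)$ that may leave $K$. The fix is exactly what you say: apply the hypothesis on a slightly larger compact set.
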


\begin{proof}
 We proceed by induction. Let us prove first the base case, $m=1$. We assume that $u\in C^\infty(\Omega)$ satisfies
\begin{equation}\label{Q.AMVP.m=1}
    \Big(Q(A_{r})\,(A_{r}-I)\Big)[u](x)=o({r}^{2})
    \end{equation}
as ${r}\to0$ locally uniformly in $\Omega$, and intend to show that $\Delta u=0$ in $\Omega$.
Consider $x\in\Omega$ and ${r}$~small enough so that the integrals in the left-hand side of \eqref{Q.AMVP.m=1} are all well-defined.
Applying the Pizzetti formula with locally uniform remainder in Lemma \ref{appendix.lemma.pizzetti} and Lemma \ref{epsilon.lemma1}, we~get
\[
    o({r}^2)
    =
    Q(A_{r})\big[A_{r}[u]-u\big] (x)
=
c_{1}{r}^{2}\,\Big(
Q(A_{r})[\Delta u] (x)
+o(1)
\Big).
\]
Let us observe that $A_{r}^j[\Delta u](x)\to \Delta u(x)$ as ${r}\to0$ for any integer $j\geq 1$. Therefore,
\[
Q(A_{r})[\Delta u] (x)
\to
Q(1)\cdot\Delta u(x)\quad\textrm{as}\ {r}\to0,
\]
where $Q(1)\neq0$ is the sum of the coefficients of $Q$. Hence,  $u$ is harmonic.

Next, we assume the following induction hypothesis:  whenever  $v\in C^\infty(\Omega)$ is such that
\begin{equation}\label{induction.hypothesis}
    \Big(Q(A_{r})\,(A_{r}-I)^{m-1}\Big)[v](x)=o\big({r}^{2m-2}\big)
    \end{equation}
as ${r}\to0$ locally uniformly in $\Omega$, then $v$ is $(m-1)$-harmonic in $\Omega$. 
Furthermore, we assume that
$u\in C^\infty(\Omega)$ satisfies \eqref{AMVP.mharmonic.turbo.smooth.propo}.  We need to show that these imply $\Delta^m u = 0$ in $\Omega$.

Let $x\in\Omega$ and assume that  ${r}<\textnormal{dist}(x,\partial\Omega)/\big(\textnormal{deg(Q)}+m\big)$
so that all iterated averages $A_{r}^j$  with $j\leq\textnormal{deg(Q)}+m$ (and hence the left-hand side of \eqref{AMVP.mharmonic.turbo.smooth.propo}) are well-defined.
From the locally uniform Pizzetti formula in Lemma \ref{appendix.lemma.pizzetti}, we have
\begin{equation}\label{interm.1746456}
(A_{r}-I)[u](x)
-
\big(c_{1}\,{r}^{2}\Delta u(x)
+
c_{2}\,{r}^{4}\Delta^{2}u(x)
+
\cdots
+
c_{m}\,{r}^{2m}\Delta^{m}u(x)
\big)
=
o({r}^{2m})
\end{equation}
as ${r} \to0$ locally uniformly in $\Omega$.
If we define
\[
w(x)=(A_{r}-I)[u](x)
-
\big(
c_{1}\,{r}^{2}\Delta u(x)
+
c_{2}\,{r}^{4}\Delta^{2}u(x)
+
\cdots
+
c_{m}\,{r}^{2m}\Delta^{m}u(x)
\big),
\]
then \eqref{interm.1746456} reads $w(x) = o(r^{2m})$ and  Lemma \ref{epsilon.lemma1} shows that 
\begin{equation}\label{induction.hypothesis.plus2}
    \Big(Q(A_{r})\,(A_{r}-I)^{m-1}\Big)[w](x)=o\big({r}^{2m}\big)
    \end{equation}
as ${r}\to0$ locally uniformly in $\Omega$.  By the definition of $w(x)$ and \eqref{induction.hypothesis.plus2}, we have
\begin{equation*}
\begin{split}
\Big(Q(A_{r})\,(A_{r}-I)^m\Big)[u](x) = &\;c_{1}\, {r}^{2}   \Big(Q(A_{r})\,(A_{r}-I)^{m-1}\Big)[\Delta u](x)\\
&+c_{2}\,{r}^{4}\Big(Q(A_{r})\,(A_{r}-I)^{m-1}\Big)[\Delta^{2}u](x) +
\cdots\\
&+
c_{m}\,{r}^{2m}\Big(Q(A_{r})\,(A_{r}-I)^{m-1}\Big)[\Delta^{m}u](x)+
o({r}^{2m}).
\end{split}
\end{equation*}
Since 
$u$ satisfies \eqref{AMVP.mharmonic.turbo.smooth.propo} by hypothesis, we have
\begin{multline*}
c_{1}{r}^{2}\Big(Q(A_{r})(A_{r}-I)^{m-1}\Big)[\Delta u](x)+\;c_{2}{r}^{4}\Big(Q(A_{r})(A_{r}-I)^{m-1}\Big)[\Delta^{2}u](x)
+\cdots\\
+c_{m}{r}^{2m}\Big(Q(A_{r})(A_{r}-I)^{m-1}\Big)[\Delta^{m}u](x)
=
o({r}^{2m})
\end{multline*}
as ${r}\to0$ locally uniformly in $\Omega$.  By Lemmas \ref{epsilon.lemma2} and \ref{epsilon.lemma1}, we have 
    \[
    \Big(Q(A_{r})(A_{r}-I)^{m-1}\Big)[\Delta^{k} u](x)
=o({r}^{2m-4}),\qquad k=2,3,\ldots,m
    \]
as ${r} \to 0$ locally uniformly in $\Omega$.
Dividing by $c_1r^2$, we have
$$\Big(Q(A_{r})(A_{r}-I)^{m-1}\Big)[\Delta u](x) = o({r}^{2m-2})$$
as ${r}\to0$ locally uniformly in $\Omega$,
which means that $\Delta u$ is $(m-1)$-harmonic by the inductive  assumption \eqref{induction.hypothesis}.  Thus, $u$ is $m$-harmonic as desired and the proof is complete.
\end{proof}

We turn now to proving Theorem \ref{thm.MVP.harmonic.intro.2} under the additional assumption that $u$ is smooth.  This shows that Theorem \ref{thm.mvp.mharmonic.smooth} is optimal, since it is precisely mean-value properties with polynomials of the form $Q(A_r)(A_r-I)^m$ that characterize $m$-harmonic functions. We first prove an auxiliary~result.

\begin{lemma}\label{charro.lebiedzik.raihen}
Given $k=0,1,2,\ldots$, let us define
$u_{k}(x)=x_{1}^{2k}.$ We have 
\begin{equation}\label{genera.lemma.m.Laplacian}
\Delta^{m} u_k(x)=\begin{cases}
\displaystyle
    \frac{(2k)!}{(2k-2m)!}u_{k-m}(x)  & m \le k \\
   \displaystyle 0 & m>k
\end{cases}
\end{equation}
and
\begin{subnumcases}{(A_{r}-I)^j [u_k]=}
   \displaystyle
    (2k)!\,c_{1}^{k}{r}^{2k} & j = k \label{genera.lemma.operator2}\\
    0 & j \textrm{$>$} k.\label{genera.lemma.operator3}
\end{subnumcases}
\end{lemma}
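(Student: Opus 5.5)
The formula \eqref{genera.lemma.m.Laplacian} is a direct computation, and the identities for $(A_r-I)^j[u_k]$ follow by induction on $k$ from the exact Pizzetti formula \eqref{pizzetti.exact} combined with Proposition \ref{thm.mvp.mharmonic.smooth.second.piece}.

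\emph{Step 1: the Laplacians.} Since $u_k$ depends only on $x_1$, we have $\Delta u_k=\partial_{x_1}^2 x_1^{2k}=2k(2k-1)x_1^{2k-2}=\frac{(2k)!}{(2k-2)!}u_{k-1}$ for $k\ge1$, and $\Delta u_0=0$. Iterating $m$ times makes the factorials telescope to $\frac{(2k)!}{(2k-2m)!}u_{k-m}$ when $m\le k$. When $m>k$, the $k$-th iterate is the constant $(2k)!$, and one more Laplacian kills it.

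\emph{Step 2: the case $j>k$.} This is \eqref{genera.lemma.operator3}. By Step 1, $\Delta^{k+1}u_k=0$, so $u_k$ is a smooth $(k+1)$-harmonic function on $\mathbb{R}^n$. Proposition \ref{thm.mvp.mharmonic.smooth.second.piece} then gives $(A_r-I)^{k+1}[u_k]=0$ for every $r$, since $\Omega=\mathbb{R}^n$. Applying $(A_r-I)^{j-k-1}$ yields $(A_r-I)^j[u_k]=0$ for all $j>k$.

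\emph{Step 3: the case $j=k$.} We argue by induction on $k$; the case $k=0$ is trivial, since $(A_r-I)^0[u_0]=1=0!\,c_1^0r^0$. Because $\Delta^{k+1}u_k=0$, the exact Pizzetti formula gives the finite sum
\[
(A_r-I)[u_k]=\sum_{i=1}^{k}c_i\,r^{2i}\Delta^i u_k=\sum_{i=1}^{k}c_i\,r^{2i}\frac{(2k)!}{(2k-2i)!}\,u_{k-i}.
\]
Now apply $(A_r-I)^{k-1}$ to each term, using that $r$ is a constant for the operator. For $i\ge2$ we have $k-1>k-i$, so by Step 2 those terms vanish. Only the $i=1$ term survives, giving
\[
(A_r-I)^k[u_k]=c_1r^2\,(2k)(2k-1)\,(A_r-I)^{k-1}[u_{k-1}].
\]
The induction hypothesis supplies $(A_r-I)^{k-1}[u_{k-1}]=(2k-2)!\,c_1^{k-1}r^{2k-2}$, which gives $(2k)!\,c_1^kr^{2k}$.

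No step is a genuine obstacle. The only point needing care is that the exact Pizzetti formula terminates for $(k+1)$-harmonic polynomials and holds for every $r$ on $\mathbb{R}^n$. Alternatively, one could invoke Lemma \ref{epsilon.lemma2}: $(A_r-I)^k[u_k]$ is a polynomial in $r$ of degree at most $2k$ with leading behaviour $c_1^k(2k)!\,r^{2k}$. However, the inductive computation above is exact and avoids that argument.
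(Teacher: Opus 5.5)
Your proof is correct and is essentially the paper's argument. You expand $(A_r-I)[u_k]$ via the exact Pizzetti formula into multiples of $u_{k-1},\dots,u_0$, annihilate every term except the $u_{k-1}$ one, and get the $j=k$ identity by induction. The only difference is organizational: you obtain the vanishing for $j>k$ in one stroke from Proposition~\ref{thm.mvp.mharmonic.smooth.second.piece}, since $u_k$ is $(k+1)$-harmonic on $\mathbb{R}^n$, whereas the paper carries both identities through a single strong induction on $k$, which in effect re-derives that proposition for this particular family.
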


\begin{proof}
The proof of \eqref{genera.lemma.m.Laplacian} is a direct computation.
For the proof of \eqref{genera.lemma.operator2}--\eqref{genera.lemma.operator3}, we proceed by strong induction in $k$.
In view of \eqref{genera.lemma.m.Laplacian}, and given $k=0,1,2,\ldots,$ Pizzetti's formula yields
\begin{equation}\label{pizza.pizzo}
\begin{split}
(A_{r}-I)[u_{k}](x)&=c_{1}{r}^{2}\Delta u_{k}(x)+c_{2}{r}^{4}\Delta^{2} u_{k}(x)+\cdots+c_{k}{r}^{2k}\Delta^{k} u_{k}(x)\\
&=c_{1}{r}^{2}\frac{(2k)!}{(2k-2)!}u_{k-1}(x) +c_{2}{r}^{4}\frac{(2k)!}{(2k-4)!}u_{k-2}(x) +\cdots+c_{k}{r}^{2k}(2k)!\,u_0(x)\\
&=\sum_{l=0}^{k-1}c_{l+1}\,r^{2l+2}\frac{(2k)!}{(2k-2l-2)!}u_{k-l-1}(x).
\end{split}
\end{equation}

In particular, \eqref{pizza.pizzo} yields  $(A_{r}-I)[u_{1}](x)=2c_{1}{r}^{2}$ (which is constant), and, in turn, $(A_{r}-I)^j[u_{1}](x)=0$ for every $j>1$,
proving the base case $k=1$.
Now, let us assume \eqref{genera.lemma.operator2}--\eqref{genera.lemma.operator3} hold true up to a certain $k>1$ (strong induction hypothesis), and prove them for $k+1$.
Let $j\geq k+1$.
From \eqref{pizza.pizzo} and the strong induction hypothesis, we deduce that
\begin{equation}\label{harkonic.mattern}
\begin{split}
&(A_{r}-I)^{j}[u_{k+1}](x)\\
&=c_{1}{r}^{2}\frac{(2k+2)!}{(2k)!}(A_{r}-I)^{j-1}\left[u_{k}\right](x)
+\cdots+c_{k}{r}^{2k}\frac{(2k+2)!}{2!}(A_{r}-I)^{j-1}
\left[u_1\right](x)
\\
&=c_{1}{r}^{2}\frac{(2k+2)!}{(2k)!}(A_{r}-I)^{j-1}\left[u_{k}\right](x).
\end{split}
\end{equation}
 If $j>k+1$, the right-hand side of \eqref{harkonic.mattern} is zero (by the induction hypothesis), and if $j= k+1$,
\[
(A_{r}-I)^{k+1}[u_{k+1}](x)
=c_{1}{r}^{2}\frac{(2k+2)!}{(2k)!}(A_{r}-I)^{k}\left[u_{k}\right](x)
=
(2k+2)!c_{1}^{k+1}{r}^{2k+2},
\]
which completes the proof of \eqref{genera.lemma.operator2}--\eqref{genera.lemma.operator3}.
\end{proof}

As a consequence, we find that the order $o(r^{2m})$ in the mean-value property is optimal. 

\begin{corollary}\label{corollary.optimal.order}
Let $m$ be a positive integer, and let $l\in(0,2m)$. Then, there are smooth functions $u$ that are not  $m$-harmonic ($\Delta^mu\neq0$ in $\Omega$), but satisfy 
$(A_{r}-I)^m[u](x)=o({r}^{l})$ as ${r}\to0$ locally uniformly in $\Omega$.
\end{corollary}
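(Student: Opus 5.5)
The natural candidate is the monomial $u_m(x)=x_1^{2m}$ from Lemma~\ref{charro.lebiedzik.raihen}, considered on any open set $\Omega$ (say $\Omega=\mathbb{R}^n$). By \eqref{genera.lemma.m.Laplacian} with $k=m$ we have $\Delta^m u_m=(2m)!\,u_0=(2m)!\neq0$ everywhere, so $u_m$ is smooth but not $m$-harmonic. By \eqref{genera.lemma.operator2} with $j=k=m$ we get the exact identity
\[
(A_{r}-I)^m[u_m](x)=(2m)!\,c_1^{m}\,r^{2m}\qquad\textrm{for all } x\in\Omega \textrm{ and every admissible } r.
\]
Crucially, the right-hand side does not depend on $x$.

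Now fix $l\in(0,2m)$. Then
\[
\frac{\big|(A_{r}-I)^m[u_m](x)\big|}{r^{l}}=(2m)!\,c_1^m\,r^{2m-l}\longrightarrow0 \quad \textrm{as } r\to0 .
\]
The bound is independent of $x$, so for any compact $K\subset\Omega$ and $\epsilon>0$ one can take $r_0=\min\big\{(\epsilon/((2m)!c_1^m))^{1/(2m-l)},\ \textnormal{dist}(K,\partial\Omega)/m\big\}$ (the second entry only when $\Omega\neq\mathbb{R}^n$, to keep the iterated averages defined). This gives $(A_r-I)^m[u_m](x)=o(r^l)$ locally uniformly in the sense of Definition~\ref{loc_unif}, and the corollary follows.

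There is essentially no obstacle here, since Lemma~\ref{charro.lebiedzik.raihen} already provides the exact computation. The only care needed is to check that the domain of definition of the iterated averages does not interfere with uniformity, which the choice of $r_0$ handles. The example also makes the optimality visible: the remainder is exactly of order $r^{2m}$ with a nonzero coefficient proportional to $\Delta^m u_m$, consistent with Lemma~\ref{epsilon.lemma2}. If one wants $\Delta^m u\neq 0$ in a stronger sense, e.g.\ nonvanishing at every point, this $u_m$ already gives that.
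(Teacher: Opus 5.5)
Your proposal is correct and follows the paper's proof exactly. You take $u_m(x)=x_1^{2m}$ and use Lemma~\ref{charro.lebiedzik.raihen} to get $\Delta^m u_m=(2m)!\neq0$ and $(A_r-I)^m[u_m]=(2m)!\,c_1^m r^{2m}$, which is $o(r^l)$ because $l<2m$; your explicit $r_0$ (including the restriction $r<\textnormal{dist}(K,\partial\Omega)/m$) simply spells out the local uniformity that the paper leaves implicit.
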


\begin{proof}
Let $m,l$ with $l<2m$ as stated, and let $u_m(x)=x_1^{2m}$. By Lemma \ref{charro.lebiedzik.raihen}, we have
\[
(A_{r}-I)^m [u_m]
=(2m)!\,c_{1}^{m}{r}^{2m}
=
o(r^l)
\]
as ${r}\to0$ locally uniformly in $\Omega$. On the other hand, 
$\Delta^{m} u_m(x)=(2m)!\neq0,$
as desired.
\end{proof}

We are now ready to prove Theorem \ref{thm.MVP.harmonic.intro.2} in the smooth case.

\begin{theorem}\label{thm.mvp.m_harmonic.turbo.boost}
Consider a non-constant real polynomial $P$, and assume the existence of a converse ``asymptotic mean-value property of order $2m$," given by $P$, that characterizes smooth $m$-harmonic functions. That is to say,
 assume that
 every nontrivial  $u\in C^\infty(\Omega)$ that satisfies the ``asymptotic mean-value formula" \begin{equation}\label{AMVP.m_harmonic.gen.P.smooth}
    P(A_{r})[u](x)=o({r}^{2m})  \quad
    \textrm{as}\ {r}\to0~ \textrm{locally uniformly in}\ \Omega,
    \end{equation}
is $m$-harmonic in $\Omega$, but not necessarily  $(m-1)$-harmonic.    
    Then,  $P(A_{r})=Q(A_{r})(A_{r}-I)^m$ for some real polynomial $Q$ with $Q(1)\neq0$.
In particular, the conclusion holds when we replace \eqref{AMVP.m_harmonic.gen.P.smooth} by
    \eqref{MVP.harmonic.gen.P.intro}.
\end{theorem}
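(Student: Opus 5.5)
Write $P(t)=(t-1)^{s}\,Q(t)$ with $Q(1)\neq0$ and $s\ge 0$ the exact multiplicity of $1$ as a root of $P$. The theorem then reduces to showing $s=m$. We rule out $s<m$ and $s>m$ separately, using the test functions $u_k(x)=x_1^{2k}$ from Lemma \ref{charro.lebiedzik.raihen}.

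\emph{Case $s>m$.} Take $u=u_m=x_1^{2m}$. By \eqref{genera.lemma.operator3}, $(A_r-I)^{s}[u_m]=0$ for $s>m$, so $P(A_r)[u_m]=Q(A_r)\big[(A_r-I)^s[u_m]\big]=0$. In particular \eqref{AMVP.m_harmonic.gen.P.smooth} holds trivially, and also the exact version \eqref{MVP.harmonic.gen.P.intro}. The hypothesis then forces $u_m$ to be $m$-harmonic. However, \eqref{genera.lemma.m.Laplacian} gives $\Delta^m u_m=(2m)!\neq0$, a contradiction. Alternatively, one could apply Proposition \ref{thm.mvp.mharmonic.smooth.second.piece} to any $s$-harmonic function that is not $m$-harmonic; the explicit polynomial $u_m$ is cleaner.

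\emph{Case $s<m$.} Take an $(s+1)$-harmonic function such as $u_s$; if $s=0$, take $u_0\equiv1$. For $s=0$ we have $P(A_r)[1]=P(1)\neq0$, which is not $o(r^{2m})$, so the constant $1$ does not satisfy the property. That by itself is not a contradiction, so this route needs refining. The cleaner way is to exploit the second clause of the hypothesis: the class of functions satisfying \eqref{AMVP.m_harmonic.gen.P.smooth} must be exactly the $m$-harmonic functions, and it must contain functions that are not $(m-1)$-harmonic. I read the hypothesis as asserting this characterization (as the word ``characterizes'' indicates), so every $m$-harmonic function satisfies the formula. Test with $u=u_{m-1}$, which is $m$-harmonic since $\Delta^m u_{m-1}=0$. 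By Proposition \ref{thm.mvp.mharmonic.smooth.second.piece} or \eqref{genera.lemma.operator3}, $(A_r-I)^{j}[u_{m-1}]=0$ for $j\ge m$. Since $s\le m-1$, we instead compute $(A_r-I)^s[u_{m-1}]$ via \eqref{pizza.pizzo} and the induction in Lemma \ref{charro.lebiedzik.raihen}: it is a polynomial in $x_1^2$ whose lowest-order-in-$r$ term is $c_1^s\frac{(2m-2)!}{(2m-2-2s)!}r^{2s}x_1^{2m-2-2s}$. This also agrees with Lemma \ref{epsilon.lemma2}, since $\Delta^s u_{m-1}\neq 0$.

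Now apply $Q(A_r)$. By Lemma \ref{epsilon.lemma2} together with Lemma \ref{epsilon.lemma1}, and mirroring the base-case argument of Proposition \ref{thm.mvp.mharmonic.smooth.first.piece},
\[
P(A_r)[u_{m-1}](x)=c_1^{s}\,r^{2s}\,Q(1)\,\Delta^s u_{m-1}(x)+o(r^{2s})
\]
locally uniformly. Since $Q(1)\neq0$ and $\Delta^s u_{m-1}(x)\neq0$ for $x_1\neq0$, this is not $o(r^{2m})$, because $2s<2m$. So $u_{m-1}$ is an $m$-harmonic function violating the formula, which contradicts the characterization. Hence $s=m$, and the exact-formula case \eqref{MVP.harmonic.gen.P.intro} follows by the same test functions, since the exact identity implies the asymptotic one.

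The main obstacle is the case $s<m$. It depends on interpreting the hypothesis as a genuine characterization, meaning that $m$-harmonic functions satisfy \eqref{AMVP.m_harmonic.gen.P.smooth}, rather than only the one-sided implication. Making the Pizzetti-type leading-term computation $Q(A_r)(A_r-I)^s[v]=Q(1)c_1^s r^{2s}\Delta^s v+o(r^{2s})$ rigorous and uniform needs Lemma \ref{epsilon.lemma2} combined with the continuity $A_r^j[w]\to w$ for smooth $w$.
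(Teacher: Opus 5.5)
Your case $s>m$ is the paper's argument: $u_m=x_1^{2m}$ satisfies even the exact formula by \eqref{genera.lemma.operator3}, yet $\Delta^m u_m=(2m)!\neq0$. The gap is in the case $s<m$. There you use that every $m$-harmonic function, specifically $u_{m-1}=x_1^{2m-2}$, satisfies \eqref{AMVP.m_harmonic.gen.P.smooth}. The word ``characterizes'' might suggest a biconditional, but the ``That is to say'' clause makes the hypothesis precise. It grants only two things: (i) every nontrivial smooth solution is $m$-harmonic, and (ii) the solutions are not necessarily $(m-1)$-harmonic, i.e., at least one solution is not $(m-1)$-harmonic. Under (i)--(ii) nothing forces $u_{m-1}$ to be a solution, so your contradiction comes from an assumption you added. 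As you observed yourself with $u_0\equiv1$, testing the formula on a function not known to be a solution yields nothing.

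The repair is your own leading-order computation combined with clause (ii), which you mention but then set aside. Let $u$ be a solution that is not $(m-1)$-harmonic. Lemmas \ref{epsilon.lemma2} and \ref{epsilon.lemma1} give $P(A_r)[u]=c_1^{s}Q(1)\,r^{2s}\Delta^s u+o(r^{2s})$. Combine this with $P(A_r)[u]=o(r^{2m})$ and $s<m$, divide by $r^{2s}$, and let $r\to0$. This gives $\Delta^s u\equiv0$, so $u$ is $s$-harmonic (for $s=0$, $u\equiv0$), hence $(m-1)$-harmonic, a contradiction.

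This is the paper's argument in compressed form. The paper writes $P(A_r)=Q(A_r)(A_r-I)^m+\sum_{j=0}^{m-1}a_j(A_r-I)^j$ and uses Proposition \ref{thm.mvp.mharmonic.smooth.second.piece} to discard the first term on the $m$-harmonic solution. It then kills $a_0,a_1,\dots,a_{m-1}$ one at a time with Lemma \ref{epsilon.lemma2}. Your factorization $P(t)=(t-1)^sQ(t)$ with $Q(1)\neq0$ does this in one step and does not need that proposition.

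With this change the exact-formula version also follows. A smooth solution of \eqref{MVP.harmonic.gen.P.intro} satisfies the asymptotic formula locally uniformly, and $u_m$ already satisfies the exact formula.
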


\begin{proof}
Let $u\in C^\infty(\Omega)$ be a nontrivial $m$-harmonic function that satisfies \eqref{AMVP.m_harmonic.gen.P.smooth}.
  Notice that there exist  polynomials $Q$ and $R$ such that 
\[
P(A_{r})=Q(A_{r})(A_{r}-I)^{m}+R(A_{r}),
\]
with 
\[
R(A_{r})={a}_{m-1}\,(A_{r}-I)^{m-1}+\cdots+{a}_{1}\,(A_{r}-I)+{a}_0 \,I
\]
for some ${a}_0,\ldots, {a}_{m-1}\in \mathbb{R}$.
Let $x \in \Omega$ and ${r}<\textnormal{dist}(x,\partial\Omega)/\textnormal{deg(P)}$ so that $P(A_{r})[u](x)$ is well-defined. Since $u$ is $m$-harmonic, Proposition \ref{thm.mvp.mharmonic.smooth.second.piece} gives 
\[
(A_{r}-I)^m[u](x)=0.
\]
Then, by \eqref{AMVP.m_harmonic.gen.P.smooth}, we have
\begin{equation}\label{equation.reminder.polyharmonic0}
o({r}^{2m})=P(A_{r})[u](x)
=
Q(A_{r})\big[(A_{r}-I)^{m}[u]\big] (x)+R(A_{r})[u](x)
=R(A_{r})[u](x)
\end{equation}
as ${r}\to0$ locally uniformly in $\Omega$.
Observe that, in principle, the degree of $P$ could  be less than $m$, in which case one would take $Q\equiv0$ and still get \eqref{equation.reminder.polyharmonic0}.

Let us show that $R=0$ and, in turn, that the degree of $P$ is at least $m$.
From \eqref{equation.reminder.polyharmonic0}, we have 
\[
o({r}^{2m})
=
{a}_{m-1}\,(A_{r}-I)^{m-1}[u](x)+\cdots+{a}_{1}\,(A_{r}-I)[u](x)+{a}_0 \,u(x)
\]
as ${r}\to0$ locally uniformly in $\Omega$. Letting $r\to0$, Lemma \ref{epsilon.lemma2} gives ${a}_0=0$ since $u$ is nontrivial. With this information, we can again apply Lemma \ref{epsilon.lemma2}  to obtain ${a}_1=0$ since, otherwise, $u$ would be necessarily harmonic (and hence $(m-1)$-harmonic) by Proposition \ref{thm.mvp.mharmonic.smooth.first.piece}, a contradiction. From here, repeated application of  Lemma \ref{epsilon.lemma2} shows that ${a}_0={a}_1=\ldots={a}_{m-1}=0$, that is, $R=0$.

It only remains to show that $Q(1)\neq0$. Assume to the contrary that
\[
P(A_{r})=\tilde{Q}(A_{r})\,(A_{r}-I)^j
\]
for some $m+1\leq j\leq \textnormal{deg(P)}$ and $\tilde{Q}(1)\neq0$. 
Let us show that, in that case, there are smooth functions satisfying      \eqref{MVP.harmonic.gen.P.intro} (and hence \eqref{AMVP.m_harmonic.gen.P.smooth}) that are not $m$-harmonic, a contradiction with our hypothesis. 
The function $u_m(x)=x_{1}^{2m}$ is one such function. 
For this choice of function, we have $\Delta^{m} u_m = (2m)! \neq0$ and 
$(A_{r}-I)^j[u_m]=0$ for all $j\geq m+1$, see Lemma \ref{charro.lebiedzik.raihen}. Therefore, $P(A_{r})[u_m]=0$ for a function $u_m$ that is not $m$-harmonic, a contradiction with our hypothesis.
\end{proof}


\section{Equivalence of mean-value formulas satisfied by locally integrable functions}\label{section.equivalence}

In this section, we prove  an equivalence between mean-value formulas satisfied by merely locally integrable functions. We show that for $u\in L^1_{\textnormal{loc}}(\Omega)$, any one mean-value property in the family holds if and only if any other does,  where each property in any given pair may independently be exact or asymptotic, and may use either solid or spherical averages.

\begin{theorem}\label{thm.MVP.polyharmonic.equivalence}
Let $\Omega\subset\mathbb{R}^n
$ be an open set, $m$ a positive integer, and $u \in L^1_\textnormal{loc}(\Omega)$. Then, the following are equivalent: 
\begin{enumerate}\itemsep3pt
    \item For  almost every $x\in\Omega$ and  every  ${r}<\textnormal{dist}(x,\partial\Omega)/m$, it holds that
\begin{equation*}
    (A_{r}-I)^m[u](x)=0,
    \end{equation*} 
    or, equivalently,
\begin{equation*}
    u(x)=\sum_{j=1}^{m}\binom{m}{j}(-1)^{j-1}A_{r}^j[u](x).
    \end{equation*}

    \item Given any polynomial  $P$ such that $1$ is a root of multiplicity exactly $m,$ 
\begin{equation*}
    P(A_{r})[u](x)=0  \quad
    \textrm{a.e. $x\in\Omega$ and  every ${r}<\frac{\textnormal{dist}(x,\partial\Omega)}{\textnormal{deg(P)}}$.}
    \end{equation*}    

    \item Given any polynomial  $P$ such that $1$ is a root of multiplicity $m$, 
\begin{equation*}
    P(A_{r})[u](x)=o({r}^{2m})  \quad
    \textrm{as}\ {r}\to0\ \textrm{locally uniformly in}\ \Omega.
    \end{equation*}

\end{enumerate}

Statements (1)--(3) are also equivalent to their counterparts where $A_r$ is replaced by $\mathcal{A}_r$.
 In particular,
   for any two real polynomials  $P$, $Q$ that have 1 as a root of multiplicity exactly $m$,  for every $u\in L^1_\textnormal{loc}(\Omega)$, we have
    \[
    P(A_{r})[u](x)=0 \quad\text{if and only if} \quad Q({\mathcal{A}_{r}})[u](x)=0
    \]
    for almost every $x\in\Omega$ and  every $r<\textnormal{dist}(x,\partial\Omega)/\max\{\textnormal{deg(P)},\textnormal{deg(Q)}\}$.
\end{theorem}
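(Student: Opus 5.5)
The strategy is to reduce everything to the smooth case (Theorem \ref{thm.mvp.mharmonic.smooth} together with Theorem \ref{thm.mvp.m_harmonic.turbo.boost}) by mollification. The key observation is that the averaging operators commute with convolution. Fix a standard mollifier $\eta_\delta$ and set $u_\delta=u*\eta_\delta$, defined on $\Omega_\delta=\{x\in\Omega:\textnormal{dist}(x,\partial\Omega)>\delta\}$. By Fubini,
\[
A_r^k[u_\delta](x)=\big(A_r^k[u]*\eta_\delta\big)(x)
\]
whenever $kr+\delta<\textnormal{dist}(x,\partial\Omega)$. The same holds for $\mathcal{A}_r$. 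Consequently $P(A_r)[u_\delta]=P(A_r)[u]*\eta_\delta$ on the appropriate set. If the right-hand side is $o(r^{2m})$ locally uniformly (in the ess sup sense of Definition \ref{loc_unif}), then so is the convolution. Indeed, for a compact $K\subset\Omega_\delta$, the convolution only sees values on the compact $\delta$-neighborhood of $K$, where the ess sup bound holds, and $|\eta_\delta*f|\le \operatorname*{ess\,sup}|f|$. Exact identities a.e. likewise transfer to everywhere identities for $u_\delta$.

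\textbf{Step 1: each statement for $u$ yields the corresponding statement for $u_\delta$ on $\Omega_\delta$.} Statements (1) and (2) for $u$ give the exact formulas for $u_\delta$ at every point of $\Omega_\delta$ with the reduced radius range. Statement (3) gives the locally uniform asymptotic formula. In each case the smooth results apply: statement (1) or (2) implies (3) trivially for $u_\delta$, and then Proposition \ref{thm.mvp.mharmonic.smooth.first.piece} (applied with $P=Q(A_r)(A_r-I)^m$, $Q(1)\ne0$) gives $\Delta^m u_\delta=0$ in $\Omega_\delta$. Proposition \ref{thm.mvp.mharmonic.smooth.second.piece} then yields $(A_r-I)^m[u_\delta](x)=0$ for every $x\in\Omega_\delta$ and $r<\textnormal{dist}(x,\partial\Omega_\delta)/m$. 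Applying $Q(A_r)$, or $Q(\mathcal{A}_r)$ by the spherical version, gives all the exact formulas for $u_\delta$, of any admissible $P$ and either type of average.

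\textbf{Step 2: pass to the limit $\delta\to0$.} As $\delta\to0$, $u_\delta\to u$ in $L^1_{\textnormal{loc}}$ and a.e. at Lebesgue points. For fixed $r$ and $k\ge1$, $A_r^k[u_\delta](x)\to A_r^k[u](x)$ for every $x$ with $kr<\textnormal{dist}(x,\partial\Omega)$, since $A_r$ is an integral against an $L^\infty$ kernel and $L^1_{\textnormal{loc}}$ convergence suffices; the limit holds locally uniformly in $x$ in fact. For the spherical mean $\mathcal{A}_r$ applied once to an $L^1_{\textnormal{loc}}$ function, convergence is only in $L^1_{\textnormal{loc}}$ in $x$. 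Along a subsequence, this gives a.e. convergence for each fixed $r$, which is enough for a countable dense set of $r$. Continuity in $r$ of the iterated expressions then extends the identity to every $r$ at a.e. $x$. Hence $P(A_r)[u](x)=0$ (or with $\mathcal{A}_r$) holds for a.e. $x$ and every admissible $r$, obtained first for $r<(\textnormal{dist}(x,\partial\Omega)-\delta)/\deg P$ and then for all admissible $r$ by letting $\delta\downarrow0$. This proves (1) and (2). Statement (3) follows from (2) trivially, and (2) for one polynomial implies (1) by the chain above. The final "in particular" claim is a specialization.

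\textbf{Main obstacle.} The delicate point is the limit in Step 2 for spherical means and the "every $r$" quantifier. One needs that $r\mapsto \mathcal{A}_r^k[u](x)$ is well-defined and continuous for a.e. $x$, where the first spherical average is only defined for a.e. $x$ and a.e. $r$. I would handle this by noting that $\mathcal{A}_r^k[u]$ for $k\ge2$, and $A_r[u]$, are continuous in $(x,r)$, and that the exceptional null set can be taken independent of $r$ by exploiting this continuity. Alternatively, one can exploit that the smooth identity actually shows $u$ coincides a.e. with an $m$-harmonic function (as the $L^1_{\textnormal{loc}}$ limit of $m$-harmonic $u_\delta$, using Weyl-type closure). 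If that limit argument is available, the statements for $u$ follow directly by applying Theorem \ref{thm.mvp.mharmonic.smooth} to the continuous representative.
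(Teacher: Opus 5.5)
Your route is the paper's: commute the averages with mollification (Lemma~\ref{molifier.satisfies.mvp}), transfer the hypothesis to $u_\delta$, apply the smooth results (Propositions~\ref{thm.mvp.mharmonic.smooth.first.piece} and~\ref{thm.mvp.mharmonic.smooth.second.piece}), and let $\delta\to0$. Your one structural change is a genuine simplification. You always return through the exact identities and read off (3) from (2) for the same polynomial; on a compact $K$ the essential supremum is $0$ once $r<\textnormal{dist}(K,\partial\Omega)/\deg P$. So you need local uniformity only for each fixed $\delta$, and you never use the reverse half of Lemma~\ref{asymptotic.mvp} (uniformity in $\epsilon$ plus the essential-supremum contradiction). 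For the solid average your Step~2 is complete, and tidier than Lemma~\ref{polynomial.mvp}. The only discontinuous term is $a_0u(x)$, which does not depend on $r$, while $A_r^k[u_\delta](x)\to A_r^k[u](x)$ for every admissible $(x,r)$ when $k\ge1$. Hence the identity holds at every Lebesgue point of $u$ for all admissible $r$ at once.

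The step that fails is your treatment of the spherical counterparts under the quantifier ``a.e.\ $x$ and \emph{every} $r$''. The continuity you invoke is false. Let $\sigma_r$ be the normalized surface measure on $\partial B_r(0)$, so $\mathcal{A}_r^2[u]=u*(\sigma_r*\sigma_r)$. For $n\ge2$, $\sigma_r*\sigma_r$ has density comparable to $|z|^{-1}(4r^2-|z|^2)^{(n-3)/2}$ on $B_{2r}(0)$, which is unbounded. For $u=|y|^{-a}$ with $n-1\le a<n$ one gets $\mathcal{A}_r^2[u](0)=+\infty$, so $\mathcal{A}_r^2[u]$ is neither continuous nor locally bounded near $0$.

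Moreover, no continuity argument reaches the term $a_1\mathcal{A}_r[u](x)$. Its value at a given $(x,r)$ depends on the representative of $u$ on the null set $\partial B_r(x)$. For the same reason your Weyl-lemma alternative does not finish (and it also imports the elliptic regularity the paper wants to avoid): $u=\tilde u$ a.e.\ does not give $\mathcal{A}_r[u](x)=\mathcal{A}_r[\tilde u](x)$ for all $r$.

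This is a real obstruction, not a technicality. In $n=1$, take $u=\chi_C$ with $C$ the Cantor set. Then $(A_r-I)[u](x)=0$ for every $x\notin C$ and every $r$. But $C+C=[0,2]$, so every $x\in(0,1)\setminus C$ has some $r$ with $x\pm r\in C$, giving $(\mathcal{A}_r-I)[u](x)=1$. For $n\ge2$ the fact you would need is true: for a.e.\ $x$, a Lebesgue-null set meets every sphere $\partial B_r(x)$ in an $\mathcal{H}^{n-1}$-null set. However, it comes from the Stein and Bourgain spherical maximal theorems, not from continuity.

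The paper does not close this point either. Lemma~\ref{polynomial.mvp} passes to the limit for each fixed $r$ and then asserts the same argument works for $\mathcal{A}_r$. What both arguments actually establish for exact spherical identities is the identity for each fixed $r$ at a.e.\ $x$, hence for a.e.\ $(x,r)$. The literal ``every $r$'' version holds for the continuous representative. For an arbitrary representative it needs the maximal-function input when $n\ge2$, and it fails when $n=1$.
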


Before proceeding with the proof of Theorem \ref{thm.MVP.polyharmonic.equivalence}, let us prove some preliminary results where we discuss the relationship between mean-value properties and mollification. Let us recall the  standard, compactly supported  mollifier (see \cite{Evans}),
\[
\eta(x)= 
\begin{cases}
        C \exp\left(\frac{1}{|x|^{2}-1}\right) & \textrm{if}\ |x|<1\\
        0 & \textrm{if}\ |x|\ge 1,
\end{cases}
\]
   the constant $C>0$ selected so that $\int_{\mathbb{R}^{n}}\eta\ dx=1$. Then, $\eta \in C^{\infty}(\mathbb{R}^{n})$ and is a radial function.
   Set 
      \begin{equation}\label{eta.delta.eq}
    \eta_{\epsilon}(x)=\frac{1}{{\epsilon}^{n}}\,\eta\left(\frac{x}{\epsilon}\right).
    \end{equation}
   Given $u \in L^1_\textnormal{loc}(\Omega)$, its mollification
   $
   u_{\epsilon}=\eta_{\epsilon}*u
   $
    is well-defined in
   $
   \Omega_{\epsilon}=\{x\in \Omega \hspace{2mm}|\hspace{2mm} \textnormal{dist}(x, \partial \Omega)>{\epsilon}\}.
   $
In the next lemma, we show that averaging and mollification commute.
\begin{lemma}\label{molifier.satisfies.mvp}
    Let $\Omega\subset \mathbb{R}^n$ be an open set, $\epsilon>0,$ and $u \in L^1_\textnormal{loc}(\Omega)$. Then, for every $k=1,2,\ldots$ we have
  \begin{equation}\label{173746238}
  \big(A_{r}^k[u]*\eta_{\epsilon}
  \big)(x)=A_{r}^k[u*\eta_{\epsilon}](x)
  \quad\textrm{for every $x\in\Omega_\epsilon$ and  ${r}<\frac{\textnormal{dist}(x,\partial\Omega_\epsilon)}{k}$}.
  \end{equation}
   The same is true for the spherical average $\mathcal{A}_{r}$.
\end{lemma}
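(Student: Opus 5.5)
The plan is to prove the case $k=1$ by Fubini and translation invariance, and then obtain general $k$ by induction, the only real care being the domains of definition.

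For $k=1$, write $A_r[u](y)=\int_{B_r(0)}u(y+z)\,\frac{dz}{|B_r|}$ as a convolution $u*\chi_r$ with $\chi_r=|B_r|^{-1}\mathbf 1_{B_r(0)}$. Since $\chi_r$ is even, fix $x\in\Omega_\epsilon$ and $r<\textnormal{dist}(x,\partial\Omega_\epsilon)$. Then
\[
\big(A_r[u]*\eta_\epsilon\big)(x)=\int_{B_\epsilon(0)}\eta_\epsilon(w)\,\frac{1}{|B_r|}\int_{B_r(0)}u(x-w+z)\,dz\,dw .
\]
All points $x-w+z$ lie in $B_{r+\epsilon}(x)$. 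Since $r<\textnormal{dist}(x,\partial\Omega_\epsilon)\le\textnormal{dist}(x,\partial\Omega)-\epsilon$, this ball is compactly contained in $\Omega$. Hence $(w,z)\mapsto \eta_\epsilon(w)u(x-w+z)$ is integrable on $B_\epsilon\times B_r$, because $u\in L^1(B_{r+\epsilon}(x))$ and $\eta_\epsilon$ is bounded. Fubini then lets us swap the order of integration to get $\frac{1}{|B_r|}\int_{B_r(0)}(u*\eta_\epsilon)(x+z)\,dz=A_r[u*\eta_\epsilon](x)$. Here we use that $x+z\in\Omega_\epsilon$, so $u_\epsilon$ is defined there. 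The same holds for all $y\in B_{r'}(x)$ with suitable radii, which is what the induction needs.

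For the inductive step, suppose \eqref{173746238} holds for $k-1$ at every point $y$ and radius with $(k-1)r<\textnormal{dist}(y,\partial\Omega_\epsilon)$. Take $x\in\Omega_\epsilon$ with $kr<\textnormal{dist}(x,\partial\Omega_\epsilon)$. Every $y\in B_r(x)$ then satisfies $(k-1)r<\textnormal{dist}(y,\partial\Omega_\epsilon)$, so $A_r^{k-1}[u]*\eta_\epsilon=A_r^{k-1}[u_\epsilon]$ on $B_r(x)$. Set $v=A_r^{k-1}[u]$. This function is defined on the set of $z$ with $(k-1)r<\textnormal{dist}(z,\partial\Omega)$, and it is locally integrable there: it is a composition of averages of an $L^1_{\rm loc}$ function, and Fubini shows each averaging maps $L^1_{\rm loc}$ to $L^1_{\rm loc}$ on the shrunk domain. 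Apply the $k=1$ argument to $v$ on this smaller open set $\Omega'=\{z:\textnormal{dist}(z,\partial\Omega)>(k-1)r\}$. Its $\epsilon$-interior contains $B_r(x)$ together with the required $\epsilon$-neighbourhood. This gives
\[
A_r^k[u]*\eta_\epsilon(x)=A_r[v]*\eta_\epsilon(x)=A_r[v*\eta_\epsilon](x)=A_r\big[A_r^{k-1}[u_\epsilon]\big](x)=A_r^k[u_\epsilon](x).
\]

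The main obstacle is bookkeeping the admissible radii. I must check that $r<\textnormal{dist}(x,\partial\Omega_\epsilon)/k$ guarantees all intermediate quantities are defined. Every point reached lies within $kr+\epsilon<\textnormal{dist}(x,\partial\Omega)$ of $x$, by the triangle inequality and $\textnormal{dist}(x,\partial\Omega_\epsilon)\le\textnormal{dist}(x,\partial\Omega)-\epsilon$, so this holds. I must also check that each iterated average is an integrable function where Fubini is applied; this follows from Tonelli applied to $|u|$.

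For spherical means, the argument is identical with $\chi_r$ replaced by normalized surface measure on $\partial B_r(0)$. This measure is also finite and symmetric, so Fubini applies in the same way.
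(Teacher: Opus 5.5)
Your proposal is correct and follows essentially the paper's route: you write $A_r[u]=u*\chi_r$, use Fubini to commute the convolutions with $\chi_r$ and $\eta_\epsilon$ for $k=1$, and induct by applying the base case to $A_r^{k-1}[u]$. Your bookkeeping of admissible radii fills in domain details the paper leaves implicit, namely $\textnormal{dist}(x,\partial\Omega_\epsilon)\le\textnormal{dist}(x,\partial\Omega)-\epsilon$, the shrunken domain $\Omega'$, and local integrability of the iterated averages.
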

   
  \begin{proof}
   Let us define
   \begin{equation}\label{indicator.chi}
\chi_{r}(x)=\frac{1}{|B_r(0)|}\,\chi_{B_r(0)}(x),
\end{equation}
with $\chi_{B_r(0)}$ the indicator function of the ball $B_r(0)$, and  observe that
      \[
      A_{r}[u](x)=\dashint_{B_{r}(x)}u(y)\,dy=(u*\chi_{r})(x),
      \]
      which is continuous in $x$ by the Lebesgue lemma.
    We proceed by induction. By Fubini's theorem we have
\[
\left(A_{r}[u]*\eta_{\epsilon}\right)(x) 
      =\left((u*\chi_{r})*\eta_{\epsilon}
      \right)(x)=\left((u*\eta_{\epsilon})*\chi_{r}\right)(x) = 
      A_{r}[u*\eta_{\epsilon}](x),
\]
for all $x\in\Omega_\epsilon$ and  ${r}<\textnormal{dist}(x,\partial\Omega_\epsilon)$,
        which proves the base case. 
         Given an integer $k\geq2,$ let us now assume
  \[
  \big(A_{r}^{k-1}[u]*\eta_{\epsilon}
  \big)(x)=A_{r}^{k-1}[u*\eta_{\epsilon}](x)
  \quad
    \textrm{for every $x\in\Omega_\epsilon$ and  ${r}<\frac{\textnormal{dist}(x,\partial\Omega_\epsilon)}{k-1}$},
  \]
        and let us prove \eqref{173746238}. Indeed, given 
         $x\in\Omega_\epsilon$ and  ${r}<\textnormal{dist}(x,\partial\Omega_\epsilon)/k$,   
         Fubini's theorem yields
      \begin{equation*}
      \begin{split}
\left(A_{r}^k[u]*\eta_{\epsilon}\right)(x)
      &=\left(A_{r}\big[A_{r}^{k-1}[u]\big]*\eta_{\epsilon} 
      \right)(x)
      =\left(\big(A_{r}^{k-1}[u]*\chi_{r}\big)*\eta_{\epsilon}
      \right)(x)
      \\
      &=
      \left(\big(A_{r}^{k-1}[u]*\eta_{\epsilon}\big)*\chi_{r}
      \right)(x)
      =
      \left(\big(A_{r}^{k-1}[u*\eta_{\epsilon}]\big)*\chi_{r}
      \right)
      (x)
      \\
      &=
      A_{r}\big[A_{r}^{k-1}[u*\eta_{\epsilon} ]\big]
      (x)
      =
      A_{r}^{k}[u*\eta_{\epsilon}](x),
      \end{split}
      \end{equation*}
    as desired. The same argument holds with the spherical average $\mathcal{A}_{r}$ in place of $A_{r}$, the only difference is the indicator function in \eqref{indicator.chi}, where now we have the corresponding normalized surface measure on $\partial B_r(0)$.  
  \end{proof}

The following lemma shows that  exact mean-value properties are preserved under mollification.

\begin{lemma}\label{polynomial.mvp}
    Let $\Omega\subset \mathbb{R}^n$ be an open set, $u \in L^1_\textnormal{loc}(\Omega)$, and $u_{\epsilon}$ its mollification. Let $P$ be a non-constant, real polynomial. Then we have
    \begin{equation}\label{827.3j.a}
    \textrm{$P(A_{r})[u](x)=0$
    for a.e. $x\in\Omega$ and every ${r}<\frac{\textnormal{dist}(x,\partial\Omega)}{\textnormal{deg}(P)}$,}
    \end{equation}
    if and only if for every open set $U$ such that $\overline{U}\subset\Omega$, and every 
    $0<\epsilon<\textnormal{dist}(U, \partial\Omega)$, it holds that 
    \[
    \textrm{
    $P(A_{r})[u_{\epsilon}](x)=0$
    for all $x\in U$ and every ${r}<\frac{\textnormal{dist}(x,\partial U)}{\textnormal{deg}(P)}$.}
    \]
    The same is true for the spherical average $\mathcal{A}_{r}$.
\end{lemma}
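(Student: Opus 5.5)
The plan is to use Lemma \ref{molifier.satisfies.mvp} to move the operator $P(A_r)$ across the convolution with $\eta_\epsilon$. For the reverse implication we add a limiting argument as $\epsilon\to0$, based on the continuity of $A_r^k[u]$ for $k\ge1$ and on Lebesgue points for the $I$-term. Write $p=\deg(P)$ and $P(t)=\sum_{k=0}^p a_k t^k$.

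\emph{Forward direction.} Fix $U$ with $\overline U\subset\Omega$, $0<\epsilon<\textnormal{dist}(U,\partial\Omega)$, $x\in U$ and $r<\textnormal{dist}(x,\partial U)/p$.
\begin{itemize}
\item Since $U\subset\Omega_\epsilon$, we have $\textnormal{dist}(x,\partial\Omega_\epsilon)\ge\textnormal{dist}(x,\partial U)$, so Lemma \ref{molifier.satisfies.mvp} applies to each $k\le p$. Summing with the coefficients $a_k$ (the $k=0$ term is trivial) gives $P(A_r)[u_\epsilon](x)=\big(P(A_r)[u]*\eta_\epsilon\big)(x)=\int_{B_\epsilon(0)}\eta_\epsilon(z)\,P(A_r)[u](x-z)\,dz$.
\item For $|z|<\epsilon$ we have $\textnormal{dist}(x-z,\partial\Omega)>\textnormal{dist}(x,\partial\Omega)-\epsilon\geq \textnormal{dist}(x,\partial U)>pr$. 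The inequality $\textnormal{dist}(x,\partial\Omega)-\epsilon\ge\textnormal{dist}(x,\partial U)$ needs a small check: the ball $B_{\textnormal{dist}(x,\partial U)+\epsilon}(x)$ lies in $\Omega$. This holds since points of that ball are within $\epsilon$ of $\overline{B_{\textnormal{dist}(x,\partial U)}(x)}\subset\overline U$, and $\epsilon<\textnormal{dist}(U,\partial\Omega)$. A slightly smaller radius can be used if equality issues arise.
\item Hence the hypothesis \eqref{827.3j.a} holds for a.e.\ $z$ in the integration region, so the integrand vanishes a.e.\ and $P(A_r)[u_\epsilon](x)=0$.
\end{itemize}

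\emph{Reverse direction.} This is the main step. Fix $r>0$ and consider the set of $x$ with $\textnormal{dist}(x,\partial\Omega)>pr$.
\begin{itemize}
\item Choose $U$ compactly contained in $\Omega$ with $\textnormal{dist}(x,\partial U)>pr$ for $x$ in a neighborhood, e.g.\ $U=\{y:\textnormal{dist}(y,\partial\Omega)>\delta\}\cap B_R$ with $\delta$ small.
\item By Lemma \ref{molifier.satisfies.mvp}, $0=P(A_r)[u_\epsilon]=\sum_{k\ge1}a_k\,(A_r^k[u])*\eta_\epsilon+a_0u_\epsilon$ on that neighborhood.
\item As $\epsilon\to0$, the functions $A_r^k[u]$ for $k\geq1$ are continuous (as noted in the proof of Lemma \ref{molifier.satisfies.mvp}, iterating), so their mollifications converge pointwise. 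Meanwhile $u_\epsilon\to u$ at every Lebesgue point, hence a.e.
\item This gives $P(A_r)[u](x)=0$ for a.e.\ such $x$, with the null set depending on $r$.
\end{itemize}

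The obstacle is the quantifier ``a.e.\ $x$, every $r$'': the exceptional null set must not depend on $r$. I would handle this by observing that the only term that is not continuous in $(x,r)$ is $a_0u(x)$, and it is independent of $r$. Take $N$ to be the complement of the Lebesgue points of $u$; it is a single null set. For $x\notin N$, the identity $a_0u(x)=-\sum_{k\ge1}a_kA_r^k[u](x)$ then holds for every admissible $r$ simultaneously, because both sides are limits in $\epsilon$ taken pointwise at $x$ for each fixed $r$. Continuity of $(x,r)\mapsto A_r^k[u](x)$, which follows from dominated convergence, can also be used to pass from rational $r$ to all $r$ if needed. The spherical case is identical, replacing $\chi_r$ by normalized surface measure. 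Continuity of $\mathcal A_r[u]$ fails for general $L^1_{\rm loc}$ functions, however. There I would instead work with the Lebesgue-point limit for each term over a countable dense set of $r$, and extend to all $r$ by continuity in $r$ of the mollified identities, which is the delicate point to verify.
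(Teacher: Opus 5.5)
For the solid average your argument is correct and is the paper's proof: you move $P(A_r)$ across the convolution with Lemma \ref{molifier.satisfies.mvp}, then let $\epsilon\to0$. It is more careful than the paper on two points the paper leaves implicit. First, the radius check ($B_{\textnormal{dist}(x,\partial U)+\epsilon}(x)\subset\Omega$) is exactly what makes the integrand of $P(A_r)[u]*\eta_\epsilon$ vanish a.e.\ on $B_\epsilon(x)$. Second, you choose a single exceptional set (the non-Lebesgue points of $u$) that works for every $r$ at once, using the continuity of $A_r^k[u]$ for $k\ge1$. The paper's limit, as written, only gives a null set for each fixed $r$.

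The step that would fail is your proposed repair of the spherical case. You cannot extend from a countable dense set of radii to all $r$ ``by continuity in $r$ of the mollified identities.'' Those identities already hold for every $r$. The real obstruction is the limit $\epsilon\to0$ at a fixed $x$ and a radius outside the countable set, which requires $x$ to be a Lebesgue point of the $r$-dependent function $P(\mathcal{A}_r)[u]$.

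Nor can you rely on continuity of $r\mapsto\mathcal{A}_r[u](x)$, which need not even be finite. For $n\ge2$, take $u(y)=|y|^{1-n}/\log(1/|y|)$ on $B_{1/2}(0)$, extended by zero. This $u$ is integrable, yet its mean over every sphere through the origin contained in $B_{1/2}(0)$ is $+\infty$.

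The paper does not resolve this either. Its ``same argument'' for $\mathcal{A}_r$ yields, for each fixed $r$, the identity for a.e.\ $x$. That is the natural form of the spherical statement for $L^1_{\rm loc}$ functions, and it is the form you should state and prove. A single null set valid for all $r$ would need a genuinely different input (of spherical-maximal-function type), not a continuity argument.
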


\begin{remark}
    If the polynomial $P$ in Lemma \ref{polynomial.mvp} has no zero-order terms, 
   no term involving the function $u(x)$ is present in the mean-value property, only terms involving the iterated averages $A_r[u](x),A_r^2[u](x),\ldots$, which define continuous functions in $x.$ 
   Then,
    \eqref{827.3j.a} holds pointwise and not just almost everywhere.
\end{remark}

\begin{proof} 
    Assume that $ P(A_{r})[u](x)=0 $
    for a.e. $x\in\Omega$ and  ${r}<\textnormal{dist}(x,\partial\Omega)/\textnormal{deg}(P)$.
    So, we can write 
    \[
        0=P(A_{r})[u](x)= \Big(\sum_{k=0}^{p}{a}_{k}A_{r}^k\Big)[u](x)
    \]
    for some real numbers ${a}_0,\ldots,{a}_p$ with $p=\textnormal{deg}(P)$.
    Fix an open set $U$ such that $\overline{U}\subset\Omega$, and $\epsilon>0$ small enough so that $U\subset\Omega_\epsilon$.
    Then, by Lemma \ref{molifier.satisfies.mvp}, we have  
    \[
    0=\big(P(A_{r})[u]*\eta_{\epsilon}\big)(x) 
    =\sum_{k=0}^{p}{a}_{k}\Big(A_{r}^k[u]*\eta_{\epsilon}\Big)(x)
    =\sum_{k=0}^{p}{a}_{k}\, A_{r}^k[u*\eta_{\epsilon}](x) 
    =P(A_{r})[u_{\epsilon}](x)
    \]
    for all $x\in U$ and  ${r}<\textnormal{dist}(x,\partial U)/\textnormal{deg}(P)$.
    
    Assume now that
      for a fixed open set  $U$ such that $\overline{U}\subset\Omega$
      and all $\epsilon$ small enough
      such that $U \subset\Omega_\epsilon$,  it holds that
    $P(A_{r})[u_{\epsilon}](x)=0$
    for all $x\in U$ and  ${r}<\textnormal{dist}(x,\partial U)/\textnormal{deg}(P)$. 
     Then,
    \[
    0=\lim_{\epsilon \to 0}P(A_{r})[u_{\epsilon}](x)=\lim_{\epsilon \to 0}P(A_{r})[u*\eta_{\epsilon}](x)=\lim_{\epsilon \to 0}\big(P(A_{r})[u]*\eta_{\epsilon}\big)(x)= P(A_{r})[u](x)
    \]
    a.e. in $U$ (and pointwise in $U$ if the polynomial $P$ contains no zero-order terms, since in that case
    $x\mapsto P(A_{r})[u](x)$ is continuous).
    Since $U$ is arbitrary, this completes the proof of the equivalence.
    The same argument holds for the spherical average $\mathcal{A}_{r}$.
\end{proof}

Finally, we show that asymptotic mean-value properties are preserved under mollification.

\begin{lemma}\label{asymptotic.mvp}
    Let $\Omega\subset \mathbb{R}^n$ be an open set,  $u \in L^1_\textnormal{loc}(\Omega)$, and $u_{\epsilon}$ its mollification. Then,
    \begin{equation}\label{58374563}
    P(A_{r})[u](x)=o({r}^{2m})\quad\textrm{as ${r} \to 0$ locally uniformly in $\Omega$}
    \end{equation}
    if and only if for every bounded open set $U$ such that $\overline{U}\subset\Omega$, we~have
    \begin{equation}\label{48574563}
    P(A_{r})[u_{\epsilon}](x)=o({r}^{2m})\quad\textrm{ as ${r} \to 0$ locally uniformly in $U,$}
    \end{equation}
    for every $\epsilon$ small enough with the little-$o$ uniform in $\epsilon$, in the sense  that for every compact set 
$K\subset U$ and $\delta>0$, there exists $r_0>0$ (depending on $U,\delta$ but not on $\epsilon$)
such that
\[
\sup_{x\in K}\big|P(A_{r})[u_{\epsilon}](x)\big|
    \le \delta\, r^{2m}
    \quad \textrm{for all } 0<r<r_0 
    \text{ and all } 0<\epsilon<\min\left\{\textnormal{dist}(U, \partial\Omega),\textnormal{dist}(K, \partial U)\right\}.
\]
The same conclusion holds  with the spherical average $\mathcal{A}_{r}$ in place of the solid average $A_{r}$.

\end{lemma}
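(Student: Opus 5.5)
The plan is to prove both directions by commuting mollification with the operator $P(A_r)$, using Lemma \ref{molifier.satisfies.mvp}. By linearity, that lemma gives
\[
P(A_r)[u_\epsilon](x)=\big(P(A_r)[u]*\eta_\epsilon\big)(x)
\quad\textrm{for } x\in U,\ r<\textnormal{dist}(x,\partial U)/\textnormal{deg}(P),
\]
whenever $\overline U\subset\Omega$ and $\epsilon<\textnormal{dist}(U,\partial\Omega)$. The zero-order term $a_0u$ is included, since $u*\eta_\epsilon=u_\epsilon$.

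\emph{Direction \eqref{58374563}$\Rightarrow$\eqref{48574563}.} Fix a bounded open $U$ with $\overline U\subset\Omega$, a compact $K\subset U$, and $\delta>0$. Take $\epsilon_0=\min\{\textnormal{dist}(U,\partial\Omega),\textnormal{dist}(K,\partial U)\}$ and the compact set $K'=\{y:\textnormal{dist}(y,K)\le\epsilon_0\}$. Then $K'\subset\overline U\subset\Omega$ (strictly, $K'$ is contained in the closure of a neighborhood, which is fine). Apply Definition \ref{loc_unif} to $K'$: there is $r_0$, which we also take smaller than $\textnormal{dist}(K',\partial\Omega)/\textnormal{deg}(P)$, such that
\[
\operatorname*{ess\,sup}_{y\in K'}|P(A_r)[u](y)|\le\delta r^{2m}\quad\textrm{for } r<r_0.
\]
For $x\in K$ and $\epsilon<\epsilon_0$, the kernel $\eta_\epsilon(x-\cdot)$ is supported in $B_\epsilon(x)\subset K'$, is nonnegative, and has integral one. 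Hence
\[
|P(A_r)[u_\epsilon](x)|\le\int\eta_\epsilon(x-y)\,|P(A_r)[u](y)|\,dy\le\delta r^{2m}.
\]
This bound holds for all $r<r_0$, with $r_0$ independent of $\epsilon$. The essential supremum is exactly what is needed here, since the convolution ignores null sets.

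\emph{Converse.} Fix a compact $K\subset\Omega$ and choose a bounded open $U$ with $K\subset U$ and $\overline U\subset\Omega$. Given $\delta$, the uniform-in-$\epsilon$ hypothesis gives $r_0$ such that
\[
\sup_K|P(A_r)[u_\epsilon]|\le\delta r^{2m}\quad\textrm{for all } r<r_0 \textrm{ and all small } \epsilon.
\]
For fixed $r$, the identity above gives $P(A_r)[u_\epsilon]=P(A_r)[u]*\eta_\epsilon\to P(A_r)[u]$ as $\epsilon\to0$, both in $L^1_{\textnormal{loc}}$ and at every Lebesgue point. The reason is that $P(A_r)[u]\in L^1_{\textnormal{loc}}$: it is $a_0u$ plus continuous functions. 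Passing to the limit at a.e. $x\in K$ gives $|P(A_r)[u](x)|\le\delta r^{2m}$ for a.e. $x\in K$, i.e. the ess sup bound for each $r<r_0$. The exceptional null set may depend on $r$, but the definition is an ess sup for each fixed $r$, so this is harmless.

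The main obstacle is the bookkeeping of domains. One must ensure that for $x\in K$ and $r<r_0$ all iterated averages of $u$ at points of $B_\epsilon(x)$ are well defined, i.e. that $B_{\textnormal{deg}(P)r}(y)\subset\Omega$ for $y\in K'$. This is arranged by shrinking $r_0$ as indicated. The spherical case is identical, because Lemma \ref{molifier.satisfies.mvp} holds for $\mathcal A_r$ as well.
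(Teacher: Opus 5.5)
Your proposal is correct and follows essentially the same route as the paper. It commutes $P(A_r)$ with mollification via Lemma \ref{molifier.satisfies.mvp}, bounds $|P(A_r)[u]*\eta_\epsilon|$ by the essential supremum of $|P(A_r)[u]|$ on a compact set (you use the closed $\epsilon_0$-neighborhood of $K$, the paper uses $\overline{U}$), and for the converse passes to the limit $\epsilon\to0$ at almost every point of $K$. You take that limit directly at Lebesgue points, whereas the paper phrases the same step as a contradiction argument using the positive-measure set $E_\tau$.
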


\begin{proof}
Let us show the proof for mean-value formulas involving $A_r$. The argument for the spherical average $\mathcal{A}_r$ in place of $A_r$ is identical. 
Assume first that \eqref{58374563} holds and let us prove \eqref{48574563}.
 Let $U$ be a bounded open set with $\overline{U}\subset \Omega$, $K\subset U$ be a compact set, and let $\delta>0$. By \eqref{58374563}, there exists $r_0>0$ such that
\begin{equation}\label{eq:u-on-Ubar}
\operatorname*{ess\,sup}_{x\in \overline{U}}
\big|P(A_r)[u](x)\big|
\le \delta\,r^{2m}
\quad\text{for all }0<r<r_0.
\end{equation}
Fix $0<r<r_0$ and take $\epsilon>0$ such that
$0<\epsilon<\min\{\textnormal{dist}(U,\partial\Omega),\textnormal{dist}(K,\partial U)\}.$
Then for each $x\in K$ we have $B_\epsilon(x)\subset U\subset \Omega_\epsilon$, and 
we can apply Lemma~\ref{molifier.satisfies.mvp} as in the proof of Lemma~\ref{polynomial.mvp} to get 
\[
|P(A_r)[u_\epsilon](x)|
= \big|(P(A_r)[u] * \eta_\epsilon)(x)\big|
\le \int_{U} \big|P(A_r)[u](y)\big|\;\eta_\epsilon(x-y)\,dy 
\le
 \operatorname*{ess\,sup}_{y\in \overline{U}} \big|P(A_r)[u](y)\big|.
\]
Taking the supremum over $x\in K$ and using \eqref{eq:u-on-Ubar} gives
\[
\sup_{x\in K}|P(A_r)[u_\epsilon](x)|
\le \delta\,r^{2m}
\]
for all $0<r<r_0$
and all $0<\epsilon<\min\{\textnormal{dist}(U,\partial\Omega),\textnormal{dist}(K,\partial U)\}$,
as desired.

Assume now that \eqref{48574563} holds and let us show \eqref{58374563}.
Let $K\subset\Omega$ be compact and fix $\delta>0$. Choose a bounded open set $U$ such that
$K\subset U$ and $\overline{U}\subset\Omega.$
By \eqref{48574563}, there exists $r_0>0$ (depending on $U,\delta$ but not on $\epsilon$)
such that
\[
\sup_{x\in K}\big|P(A_{r})[u_{\epsilon}](x)\big|
    \le \delta\, r^{2m}
\]
 for all $0<r<r_0$  and all  $0<\epsilon<\min\left\{\textnormal{dist}(U, \partial\Omega),\textnormal{dist}(K, \partial U)\right\}$.
Since $U\subset\Omega_\epsilon$, we can invoke Lemma~\ref{molifier.satisfies.mvp} once more to get
\begin{equation}\label{u.epsilon.bound}
\sup_{x\in K} \big|(P(A_r)[u] * \eta_\epsilon)(x)\big|
=
\sup_{x\in K}\big|P(A_{r})[u_{\epsilon}](x)\big|
    \le \delta\, r^{2m}
\end{equation}
under the same conditions on $r$ and $\epsilon$.
Now, we want to show that
\[
\operatorname*{ess\,sup}_{x\in K} \big|P(A_r)[u](x)\big| \le \delta\,r^{2m},
\]
which is \eqref{58374563}. Suppose, for the sake of contradiction, that for some fixed $r\in(0,r_0)$ we have
\[
\operatorname*{ess\,sup}_{x\in K} \big|P(A_r)[u](x)\big|
> \delta\,r^{2m},
\]
and choose $\tau>0$ such that
\[
\operatorname*{ess\,sup}_{x\in K} \big|P(A_r)[u](x)\big| \ge \delta\,r^{2m} + 3\tau.
\]
Define
\[
E_\tau = \big\{x\in K : \big|P(A_r)[u](x)\big| \ge \delta\,r^{2m} + 2\tau\big\}.
\]
Then, by the definition of essential supremum, $|E_\tau|>0$.  On the other hand, for a.e. $x\in E_\tau$
\[
(P(A_r)[u] * \eta_\epsilon)(x) \to P(A_r)[u](x)
\quad\text{as }\epsilon\to 0
\]
by the properties of mollification.
 Hence, for any such $x\in E_\tau$, there is $\epsilon_x>0$ such that 
\[
\big|(P(A_r)[u] * \eta_\epsilon)(x) - P(A_r)[u](x)\big| < \tau
\]
for all~$\epsilon\in(0,\epsilon_x)$. We conclude that
\[
\big|(P(A_r)[u] * \eta_\epsilon)(x)\big|
\ge \big|P(A_r)[u](x)\big| - \tau
\ge \delta\,r^{2m} + \tau
> \delta\,r^{2m},
\]
a contradiction with \eqref{u.epsilon.bound}.
\end{proof}

We are now ready for the
\begin{proof}[Proof of Theorem \ref{thm.MVP.polyharmonic.equivalence}]
We begin by proving the equivalence between statements (1) and (2). In fact, let us prove a more general equivalence. Let $P_1,P_2$ be two real polynomials, not necessarily of the same degree, that have $1$ as a root of multiplicity exactly $m.$ Given $u \in L^1_\textnormal{loc}(\Omega)$ such that
\[
  P_1(A_{r})[u](x)=0  \quad
    \textrm{for a.e. $x\in\Omega$ and every ${r}<\frac{\textnormal{dist}(x,\partial\Omega)}{\textnormal{deg($P_1$)}}$,}
\]
we are going to show that the same statement holds with $P_2$ in place of $P_1.$

Fix an open set $U$ such that $\overline{U}\subset\Omega$. By Lemma~\ref{polynomial.mvp}, for every $0<\epsilon<\textnormal{dist}(U, \partial\Omega)$, we have
\begin{equation}\label{p1.theorem.proof.290}
    P_1(A_{r})[u_{\epsilon}](x)=0\ 
    \textrm{for all $x\in U$ and every}\ r<\frac{\textnormal{dist}(x,\partial U)}{\textnormal{deg}(P)}.
\end{equation}
This allows us to work with smooth functions. Then, by 
Theorems \ref{thm.mvp.m_harmonic.turbo.boost} and \ref{thm.mvp.mharmonic.smooth}, $u_\epsilon$ also satisfies 
\[
    P_2(A_{r})[u_\epsilon](x)=0  \quad
    \textrm{for every $x\in U$ and  ${r}<\frac{\textnormal{dist}(x,\partial U)}{\textnormal{deg($P_2$)}}$}
\]
for every $\epsilon>0$ in the same range as before.
Since the set $U$ is arbitrary, Lemma \ref{polynomial.mvp} implies that  $P_2(A_r)[u](x)=0$ for almost every $x$ in $\Omega$ and every  $r<{\text{dist}(x,\partial\Omega)/\text{deg}(P_2
)},$ as desired.
The same argument with the roles of $P_1$ and $P_2$ reversed proves a full equivalence and, in particular, that statements (1) and~(2) are equivalent.

We will now show that statements (2) and (3) are equivalent. 
 As before, let $P_1,P_2$ be two real polynomials, not necessarily of the same degree, that have $1$ as a root of multiplicity exactly $m.$ 
First, assume statement (3) holds for $P_1$ and let us prove that statement (2) holds for $P_2$. More precisely, assume that $u \in L^1_\textnormal{loc}(\Omega)$ satisfies
\begin{equation}\label{in.the.proof.of.them.4.1.statement.1.1}
    P_1(A_{r})[u](x)=o({r}^{2m})  \quad
    \textrm{as}\ {r}\to0\ \textrm{locally uniformly in}\ \Omega.
\end{equation}
Let $u_{\epsilon}$ be the mollification of $u$, and consider a fixed bounded open set $U$ such that $\overline{U}\subset\Omega$.
By Lemma \ref{asymptotic.mvp}, it holds 
\begin{equation}\label{jhfher8.383}
    P_1(A_{r})[u_{\epsilon}](x)=o({r}^{2m})\quad\textrm{ as ${r} \to 0$ locally uniformly in $U,$}
\end{equation}
with the little-$o$ uniform in $\epsilon$ in the sense of Lemma \ref{asymptotic.mvp}. In particular, $0<\epsilon<\textnormal{dist}(U, \partial\Omega)$.
Since $u_\epsilon$ is smooth,  
Theorems~\ref{thm.mvp.m_harmonic.turbo.boost} and \ref{thm.mvp.mharmonic.smooth} apply, and we get
\begin{equation}\label{jdhfyf736hr7}
P_2(A_{r})[u_\epsilon](x)=0  \quad
    \textrm{for every $x\in U$ and  ${r}<\frac{\textnormal{dist}(x,\partial U)}{\textnormal{deg($P_2$)}}$}
\end{equation}
for all $0<\epsilon<\textnormal{dist}(U, \partial\Omega)$. Since $U$ is arbitrary, by 
Lemma \ref{polynomial.mvp}, the mean-value property \eqref{jdhfyf736hr7} transfers to $u$, i.e.,
\begin{equation}\label{in.the.proof.of.them.4.1.statement.2.1}
   P_2(A_{r})[u](x)=0  \quad
    \textrm{for a.e. $x\in\Omega$ and every  ${r}<\frac{\textnormal{dist}(x,\partial\Omega)}{\textnormal{deg($P_2$)}}$},
\end{equation}
that is, statement (2) holds for $P_2$ as desired.

Let us now assume \eqref{in.the.proof.of.them.4.1.statement.2.1} and prove \eqref{in.the.proof.of.them.4.1.statement.1.1}.
Fix a bounded open set $U$ such that $\overline{U}\subset\Omega$ and $0<\epsilon<\textnormal{dist}(U, \partial\Omega)$. The argument proceeds in three steps.
First, using Lemma \ref{polynomial.mvp} as before, one can show that the mean-value property 
\eqref{in.the.proof.of.them.4.1.statement.2.1}
transfers to 
$u_{\epsilon}$ pointwise in $U$ and, by Theorem \ref{thm.mvp.mharmonic.smooth}, that 
$u_{\epsilon}$ also satisfies \eqref{p1.theorem.proof.290} pointwise in $U$. 
Next, the key observation is that
\eqref{p1.theorem.proof.290} also means $u_\epsilon$ immediately satisfies \eqref{jhfher8.383} with the little-$o$ uniform in $\epsilon$ in the sense of Lemma \ref{asymptotic.mvp}: Given a compact set 
$K\subset U$, $\delta>0$, and $0<\epsilon<\min\left\{\textnormal{dist}(U, \partial\Omega),\textnormal{dist}(K, \partial U)\right\}$,
\[
\sup_{x\in K}\big|P_1(A_{r})[u_{\epsilon}](x)\big|
=0
    \le \delta\, r^{2m}
    \quad \textrm{for all } 0<r<r_0=\frac{\textnormal{dist}(K,\partial U)}{\textnormal{deg($P_1$)}}
\]
(note that $r_0$ is independent of $\epsilon$).
Finally, since $U\subset\Omega$ is arbitrary, we can apply Lemma  \ref{asymptotic.mvp} one final time to transfer the asymptotic mean-value property to $u$ in $\Omega$ and obtain \eqref{in.the.proof.of.them.4.1.statement.1.1} as desired.

The proofs of the equivalences among relations involving $\mathcal{A}_r$ and $A_r$ are identical once we notice that mean-value properties involving $\mathcal{A}_r$ can also be related to those involving $A_r$ using Theorems~\ref{thm.mvp.m_harmonic.turbo.boost} and \ref{thm.mvp.mharmonic.smooth}.
\end{proof}


\section{Locally integrable functions that satisfy a mean-value property are smooth}\label{section.regularity}

In this section we prove Theorem \ref{thm.regularity}, which follows from the following result.

\begin{theorem}\label{smooth function.proof}
    Let $P$ be a real polynomial  which has 1 as a root with multiplicity $m\geq1$. Assume that $u \in L^1_\textnormal{loc}(\Omega)$ satisfies either
    \begin{equation}\label{smoothness.MVP}
            P(A_{r})[u](x)=o({r}^{2m})  \quad
    \textrm{as}\ {r}\to0\ \textrm{locally uniformly in}\ \Omega,
    \end{equation}
or \eqref{smoothness.MVP} with $\mathcal{A}_r$ in place of $A_r$. Then, $u\in C^\infty(\Omega).$  
In particular, the conclusion holds when we replace \eqref{smoothness.MVP} by
\[
    P(A_{r})[u](x)=0  \quad
    \textrm{for a.e. $x\in\Omega$ and every ${r}<\frac{\textnormal{dist}(x,\partial\Omega)}{\textnormal{deg(P)}}$,
    }
\]
or the corresponding statement for $\mathcal{A}_r$.
\end{theorem}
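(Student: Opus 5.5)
The plan is to reduce everything to the canonical exact mean-value property for the solid average and then bootstrap regularity, using that each application of $A_r$ gains one derivative.

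\emph{Step 1: reduction.} By Theorem~\ref{thm.MVP.polyharmonic.equivalence}, each of the hypotheses is equivalent to statement (1) of that theorem. This covers the asymptotic property \eqref{smoothness.MVP}, its exact version, and their counterparts with $\mathcal{A}_r$ in place of $A_r$. Statement (1) says that
\[
u(x)=\sum_{j=1}^{m}\binom{m}{j}(-1)^{j-1}A_{r}^j[u](x)
\qquad\textrm{for a.e. } x\in\Omega \textrm{ and every } r<\textnormal{dist}(x,\partial\Omega)/m .
\]
If the statement is read with $1$ a root of multiplicity at least $m$, I would replace $m$ by the exact multiplicity $m'\ge m$, since $o(r^{2m})$ is weaker than $o(r^{2m'})$. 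The cleanest way to handle this is to note that the exact multiplicity is what enters the equivalence theorem. This bookkeeping is the point I expect to need the most care, because Theorem~\ref{thm.MVP.polyharmonic.equivalence} is stated for an exact root multiplicity matching the order of the remainder. If the mismatch causes trouble, the fallback is to assume exact multiplicity $m$, as in Theorems~\ref{thm.MVP.polyharmonic.intro} and~\ref{thm.MVP.harmonic.intro.2}.

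\emph{Step 2: regularity gain of $A_r$.} Fix $r>0$ and an open set $V$ with $\textnormal{dist}(V,\partial W)>r$. Then $A_r$ has the following mapping properties:
\begin{enumerate}
\item it maps $L^1_{\textnormal{loc}}(W)$ into $C^0(V)$, since $A_r[v]=v*\chi_r$ is continuous by the same observation used in Lemma~\ref{molifier.satisfies.mvp};
\item for $k\ge0$, it maps $C^k(W)$ into $C^{k+1}(V)$. This follows from the divergence theorem, which gives
\[
\partial_{x_i}A_r[v](x)=\frac{1}{|B_r|}\int_{\partial B_r(x)} v(y)\,\nu_i(y)\,d\mathcal{H}^{n-1}(y),
\]
and the right-hand side is $C^k$ in $x$ whenever $v\in C^k$. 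For $v\in C^0$, I would justify this formula first for mollified $v$ and then pass to the limit uniformly on compacts.
\end{enumerate}

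\emph{Step 3: bootstrap.} Fix a compact set $K\subset\Omega$ and an integer $N\ge1$. Set $d=\textnormal{dist}(K,\partial\Omega)$ and choose $r<d/(2mN)$. Define nested open sets $\Omega=U_0\supset U_1\supset\cdots\supset U_N\supset K$ by
\[
U_\ell=\{x:\textnormal{dist}(x,\partial\Omega)>2\ell m r\}.
\]
By Step~1, the function
\[
\tilde u:=\sum_{j=1}^m\binom{m}{j}(-1)^{j-1}A_r^j[u]
\]
agrees with $u$ almost everywhere in $U_1$.

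Now induct on $\ell$. Suppose $u$ agrees a.e.\ on $U_\ell$ with a $C^{\ell-1}$ function; the case $\ell=1$ only uses $u\in L^1_{\textnormal{loc}}$. Each $A_r^j[u]$ with $j\ge1$ depends only on the values of $u$ within distance $jr\le mr$. Applying Step~2 $j$ times therefore shows that each $A_r^j[u]$ is $C^{\ell}$ on $U_{\ell+1}$. Since the $A_r^j[u]$ do not see changes on null sets, $\tilde u$ is a $C^\ell$ representative of $u$ on $U_{\ell+1}$.

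After $N$ steps, $u$ has a $C^{N-1}$ representative near $K$. Representatives obtained with different choices of $K$, $N$ and $r$ agree almost everywhere, and continuous functions that agree a.e.\ agree everywhere. So they patch to a single continuous representative, which lies in $C^{N}$ near every compact set for every $N$. Hence $u\in C^\infty(\Omega)$.

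Once the reduction in Step~1 is settled, the derivative formula in Step~2 is the only genuinely analytic input, and it is routine.
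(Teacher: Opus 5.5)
Your argument is correct, and it takes a genuinely different route from the paper.

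Both proofs begin with Theorem~\ref{thm.MVP.polyharmonic.equivalence}. The paper then passes to the spherical identity $(\mathcal{A}_r-I)^m[u]=0$ and builds $v=\sum_{j=0}^{m-1}\binom{m}{j+1}(-1)^j\mathcal{A}_r^j[u]$, so that $\mathcal{A}_r[v]=u$. It tries to finish in one stroke, Weyl-lemma style, by showing $u=v*\eta_\epsilon$ through polar coordinates. You instead fix a single radius and bootstrap. You use that the solid average maps $L^1_{\textnormal{loc}}$ into $C^0$ and $C^k$ into $C^{k+1}$, at the cost of shrinking the domain by $O(mr)$ per pass.

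What your route buys is that it only ever uses the mean-value identity at the one radius where it holds. The paper's polar-coordinate step needs $\mathcal{A}_s[v]=u$ for all $s\in(0,\epsilon)$, but $v$ depends on the fixed $r$. For example, with $m=2$ and $u=|x|^2$ one gets $v=|x|^2-r^2$. Then $v*\eta_\epsilon=u+\int|z|^2\eta_\epsilon(z)\,dz-r^2$, which differs from $u$ for generic $r$. Your fixed-radius bootstrap never needs that identity.

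Your choice of $A_r$ over $\mathcal{A}_r$ is also essential, since a single spherical mean need not improve $C^k$ regularity. The price is the nested-domain bookkeeping and the patching of a.e.-equal continuous representatives, and you handle both correctly.

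Two minor points.

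\begin{itemize}
\item In the base case, $u\in L^1_{\textnormal{loc}}$ only yields $A_r[u]\in C^0$, not $C^1$, so your derivative count is off by one. This is harmless since $N$ is arbitrary.
\item Your multiplicity remark runs backwards: a hypothesis $o(r^{2m})$ cannot be upgraded to $o(r^{2m'})$ for $m'>m$. In fact the ``at least $m$'' reading of the statement is false. With $P(t)=(t-1)^2$ and $m=1$, every $u\in C^3$ satisfies $(A_r-I)^2[u]=c_1r^2(A_r-I)[\Delta u]+o(r^2)=o(r^2)$ locally uniformly. This includes $u=|x_1|^{7/2}$, which is not $C^4$. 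So exact multiplicity is the only correct reading; it is your fallback and what Theorem~\ref{thm.MVP.polyharmonic.equivalence} uses.
\end{itemize}
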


The proof expands on a well-known idea for the classical  mean-value property for harmonic functions, which  can be found in \cite[Theorem 6, p. 28]{Evans}.

\begin{proof}
    Assume that $u \in L^1_\textnormal{loc}(\Omega)$ and satisfies \eqref{smoothness.MVP}. 
    Then, by Theorem \ref{thm.MVP.polyharmonic.equivalence}, we have proved that $u$ also satisfies $(\mathcal{A}_{r}-I)^{m}[u](x)=0$ a.e. in $\Omega$ for every sufficiently small $r$.

Fix $\epsilon>0$, and consider $\Omega_{\epsilon}=\{x\in \Omega: \textnormal{dist}(x,\partial\Omega)\ge \epsilon\}$.
    Let us define   \begin{equation*}
        v(x)=\sum_{k=0}^{m-1}(\mathcal{A}_{r}-I)^{k}[u](x),
    \end{equation*}
    which is well-defined for all $x\in\Omega_\epsilon$ and $r<\epsilon/(m-1)$.
    Then, by the binomial theorem and the hockey-stick identity for binomial coefficients, we can write 
    \[
        v(x)=\sum_{k=0}^{m-1}\sum_{j=0}^{k}\binom{k}{j}(-1)^{j}\mathcal{A}_{r}^{j}[u](x)=\sum_{j=0}^{m-1} \binom{m}{j+1}(-1)^{j}\mathcal{A}_{r}^{j}[u](x).
    \]
    The function $v$ thus defined has the  key property that its spherical average equals $u$ almost everywhere, i.e., \begin{equation}\label{average.v.eq}
        \begin{split}
       \mathcal{A}_{r}[v](x)&=\mathcal{A}_{r}\bigg[\sum_{j=0}^{m-1} \binom{m}{j+1}(-1)^{j}\mathcal{A}_{r}^{j}[u](x)\bigg]\\&=\sum_{j=0}^{m-1} \binom{m}{j+1}(-1)^{j}\mathcal{A}_{r}^{j+1}[u](x)= \sum_{k=1}^{m} \binom{m}{k}(-1)^{k-1}\mathcal{A}_{r}^{k}[u](x)=u(x),
       \end{split}
    \end{equation}
upon possibly  reducing the range of $r$ so that $r<\epsilon/m$ and $\mathcal{A}_{r}[v]$ is  well-defined in $\Omega_\epsilon$ for all such~$r$.

    We will now prove that $u$ is smooth by demonstrating that it coincides with the mollification of~$v$, that is, $u \equiv v_{\epsilon}$ in $\Omega_{\epsilon}$.
    Let $\eta_{\epsilon}$ be the standard mollifier, defined in \eqref{eta.delta.eq}. Then, if $x\in \Omega_{\epsilon}$, a change to polar coordinates and expression \eqref{average.v.eq} yield
\begin{equation*}
        \begin{split}
          v_{\epsilon}(x)=(v*\eta_{\epsilon})(x)&=\int_{\mathbb{R}^{n}}v(y)\eta_{\epsilon}(x-y)\, dy=\frac{1}{\epsilon^{n}}\int_{B_{\epsilon}(x)}v(y)\eta\bigg(\frac{|x-y|}{\epsilon}\bigg)\, dy\\&=
          \frac{1}{\epsilon^{n}}\int_{0}^{\epsilon}\int_{\partial B_{r}(x)}v(y)\, d\mathcal{H}_{y}^{n-1}\, \eta\bigg(\frac{r}{\epsilon}\bigg)\, dr =
          \frac{1}{\epsilon^{n}}\int_{0}^{\epsilon}|\partial B_{r}(x)|\,\mathcal{A}_{r}[v](x)\, \eta\bigg(\frac{r}{\epsilon}\bigg)\, dr\\&
          =\frac{u(x)}{\epsilon^{n}}\int_{0}^{\epsilon}\int_{\partial B_{r}(0)}\eta\bigg(\frac{r}{\epsilon}\bigg)\,d\mathcal{H}_{y}^{n-1}\, dr=u(x)\int_{B_{\epsilon}(0)}\eta_{\epsilon}(y)\, dy=u(x).
        \end{split}
    \end{equation*}
    We have proved that $u \equiv v_{\epsilon}$
    in $\Omega_{\epsilon}$. Since 
    $v_{\epsilon}\in C^{\infty}(\Omega_{\epsilon})$ and $\epsilon$ is arbitrary, the proof is complete.
\end{proof}


\section{Proof of Theorems \ref{thm.MVP.polyharmonic.intro} and \ref{thm.MVP.harmonic.intro.2} for locally integrable functions}\label{section.general.case}

Let us prove Theorem \ref{thm.MVP.polyharmonic.intro}. We first observe that, by Theorem \ref{thm.MVP.polyharmonic.equivalence}, the mean-value properties in  statements (2), (3) of Theorem \ref{thm.MVP.polyharmonic.intro} and (4)--(6) of Corollary \ref{remark.with.extra.equivalences} are all equivalent. 
Therefore, it is enough to prove the equivalence of  statements (1) and (4).

Let us assume statement (1) holds, that is, assume  $u$ be $m$-harmonic in $\Omega$, in the sense that  $u\in C^{2m}(\Omega)$ and $\Delta^mu=0$ in $\Omega$ in the classical sense. Our goal is to show that statement (4) holds, or, equivalently, that $u$ satisfies \eqref{MVP.mharmonic.main.intro}.

Given $\epsilon>0$, let $\Omega_{\epsilon}=\{x\in \Omega: \textnormal{dist}(x,\partial\Omega)\ge \epsilon\}$ and let $\eta_{\epsilon}$ be the standard mollifier. 
Let $u_{\epsilon}=u*\eta_{\epsilon}$ be the mollification of $u$, well defined in $\Omega_{\epsilon}$. 
By the properties of the mollification, $u_\epsilon\in C^{\infty}(\Omega_{\epsilon})$ and  is $m$-harmonic in $\Omega_\epsilon$; hence, by Theorem \ref{thm.mvp.mharmonic.smooth}, $u_\epsilon$ satisfies \eqref{MVP.mharmonic.main.intro} for  every $x\in\Omega_\epsilon$.

From Lemma \ref{molifier.satisfies.mvp}, for every positive integer $k$ we have
$A_{r}^k[u_{\epsilon}](x)=\big(A_{r}^k[u]*\eta_{\epsilon}
  \big)(x)$ in $\Omega_\epsilon$ for  ${r}<\textnormal{dist}(x,\partial\Omega_\epsilon)/k$. Since $x\mapsto A_{r}^k[u](x)$ is  continuous by the Lebesgue lemma, we have $$A_{r}^k[u_{\epsilon}]=A_{r}^k[u]*\eta_{\epsilon}\to A_{r}^k[u]\quad\textrm{as}\ \epsilon\to0$$
  uniformly on compact subsets of $\Omega$. Furthermore, $u_\epsilon\to u$ a.e. in $\Omega$ as $\epsilon\to0,$ and we deduce that $u$ satisfies
\eqref{MVP.mharmonic.main.intro} a.e. in $\Omega$, as desired.

To prove the converse, i.e., that statement (4) implies statement (1),
notice that
$u\in C^{\infty}$ by Theorem \ref{smooth function.proof}. Therefore, assuming any one of the mean-value properties in  statements (2)--(6) (which, again, are equivalent), statement (1) also holds  by Theorem \ref{thm.mvp.mharmonic.smooth}, completing the proof of Theorem \ref{thm.MVP.polyharmonic.intro}. 

Now, to prove Theorem \ref{thm.MVP.harmonic.intro.2}, first we observe that if $u$ satisfies the mean-value property in \eqref{AMVP.m_harmonic.gen.P.intro}, then 
 Theorem \ref{smooth function.proof} implies that $u\in C^{\infty}(\Omega)$. 
 Thus, Theorem \ref{thm.mvp.m_harmonic.turbo.boost} applies, and the proof of Theorem~\ref{thm.MVP.harmonic.intro.2} is complete.

\appendix\section{Pizzetti's expansions}\label{appendix.pizzetti.formulas}
For convenience, the following result states the Pizzetti formula as used above. The proof of statement (1) below closely follows \cite{O}. It is included for completeness, particularly to establish the local uniformity of the remainder (in the sense of Definition \ref{loc_unif}) in statement (2), which is not addressed in \cite{O}. The proof of statement (3)  follows \cite{Lysik.2011}.

\begin{lemma}\label{appendix.lemma.pizzetti}
Consider an open set $\Omega \subset \mathbb{R}^n$ and a function $u \in C^{2m}(\Omega).$
Define the constants
\[
c_{k}=\frac{1}{2^{k}k!\prod_{j=1}^{k}(n+2j)}
\qquad\textrm{and}\qquad
d_k=\frac{n+2k}{n}\,c_{k}=\frac{1}{2^{k}k!\prod_{j=0}^{k-1}(n+2j)}.
\]
It holds that 
\begin{enumerate}
    \item The mean-value of $u$ over the ball $B_{r}(x)$ and the sphere $\partial B_{r}(x)$  can be represented    as $r \to 0$~by the Pizzetti formulas, 
     \begin{equation} \label{2.series.expansion.mvp.ball}
        A_{r}[u](x)=\dashint_{B_{r}(x)}u(y)\,dy=u(x)+\sum_{k=1}^{m}c_k\,\Delta^{k}u(x)\,r^{2k}+o(r^{2m}),
    \end{equation}
and 
    \begin{equation} \label{2.series.expansion.mvp.surface}
        \mathcal{A}_{r}[u](x)=\dashint_{\partial B_{r}(x)}u(y)\,dy=u(x)+\sum_{k=1}^{m}d_k\,\Delta^{k}u(x)\,r^{2k}+o(r^{2m}).
    \end{equation}

    \item If $u\in C^{2m+1}(\Omega)$, then the remainder in formulas \eqref{2.series.expansion.mvp.ball}, \eqref{2.series.expansion.mvp.surface} is locally uniform in the sense of Definition \ref{loc_unif}.

    \item In the special case that $u$ is $m$-harmonic, the Pizzetti formulas \eqref{2.series.expansion.mvp.ball}, \eqref{2.series.expansion.mvp.surface} are exact, i.e.,
    \begin{equation}\label{pizzetti.exact}
    A_{r}[u](x)=u(x)+\sum_{k=1}^{m-1}c_{k}\,\Delta^{k}u(x)\,r^{2k}
    \quad\textrm{and}\quad
    \mathcal{A}_{r}[u](x)=u(x)+\sum_{k=1}^{m-1}d_{k}\,\Delta^{k}u(x)\,r^{2k}
    \end{equation}
    for  every $x\in\Omega$ and    ${r}<\textnormal{dist}(x,\partial\Omega)$ (when $m=1$, both sums in \eqref{pizzetti.exact} are zero).
\end{enumerate}
\end{lemma}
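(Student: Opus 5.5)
We prove the three statements of Lemma~\ref{appendix.lemma.pizzetti} in order: first the spherical formula \eqref{2.series.expansion.mvp.surface}, then the ball formula \eqref{2.series.expansion.mvp.ball} from it, then uniformity and exactness.

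\textbf{Statement (1).} Fix $x$ and set $\phi(r)=\mathcal{A}_{r}[u](x)$, extended evenly in $r$. Writing $\phi(r)=\dashint_{\partial B_1(0)}u(x+r\omega)\,d\mathcal{H}^{n-1}(\omega)$, the function $\phi$ is $C^{2m}$ near $0$, so Taylor's theorem gives
\[
\phi(r)=\sum_{j=0}^{2m}\frac{\phi^{(j)}(0)}{j!}r^j+o(r^{2m}),
\qquad
\phi^{(j)}(0)=\dashint_{\partial B_1}\big(D^ju(x)\,\omega^{\otimes j}\big)\,d\mathcal{H}^{n-1}(\omega).
\]
Odd moments of $\omega$ vanish by symmetry. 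For even orders we use the standard sphere moment identity: for a homogeneous polynomial $p$ of degree $2k$,
\[
\dashint_{\partial B_1}p\,d\mathcal{H}^{n-1}=\frac{\Delta^kp}{2^kk!\prod_{j=0}^{k-1}(n+2j)}.
\]
This can be proved by integrating $p\,e^{-|y|^2}$ over $\mathbb{R}^n$ in polar coordinates and comparing with the Gaussian moments, or by induction using the divergence theorem. It gives the coefficient $d_k\Delta^ku(x)$ of $r^{2k}$. The ball formula then follows from $A_r[u](x)=\frac{n}{r^n}\int_0^r t^{n-1}\mathcal{A}_t[u](x)\,dt$. Termwise integration gives $\frac{n}{n+2k}d_k=c_k$, and the remainder stays $o(r^{2m})$ because $\frac{n}{r^n}\int_0^r t^{n-1}o(t^{2m})\,dt=o(r^{2m})$.

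\textbf{Statement (2).} Use the Lagrange form of the remainder at order $2m+1$. Since odd moments vanish, the remainder equals
\[
\frac{r^{2m+1}}{(2m+1)!}\dashint_{\partial B_1}D^{2m+1}u(\xi_\omega)\,\omega^{\otimes(2m+1)}\,d\mathcal{H}^{n-1}(\omega),
\qquad \xi_\omega\in B_r(x).
\]
Given a compact $K\subset\Omega$, take $r<\tfrac12\,\mathrm{dist}(K,\partial\Omega)$. Then the remainder is bounded by $C r^{2m+1}\sup_{K'}|D^{2m+1}u|$, where $K'$ is the closed $\tfrac12\mathrm{dist}(K,\partial\Omega)$-neighbourhood of $K$. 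This is $o(r^{2m})$ uniformly on $K$. The ball case inherits uniformity through the radial integral.

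\textbf{Statement (3).} If $\Delta^mu=0$, then $u$ is smooth. Apply \eqref{2.series.expansion.mvp.surface} at an arbitrary order $M\ge m$: all terms with $k\ge m$ vanish. This shows only that $\phi(r)$ minus a polynomial is $o(r^{2M})$ for every $M$, which is not enough for exactness on the whole range, since $\phi$ need not be analytic a priori. This is the main obstacle. Instead we use the ODE satisfied by spherical means, $\phi''+\frac{n-1}{r}\phi'=\mathcal{A}_r[\Delta u](x)$, valid for all $r<\mathrm{dist}(x,\partial\Omega)$. Induct on $m$:
\begin{itemize}
\item By the induction hypothesis applied to the $(m-1)$-harmonic function $\Delta u$, the right-hand side is the explicit even polynomial $\sum_{k=0}^{m-2}d_k\Delta^{k+1}u(x)\,r^{2k}$ (with $d_0=1$).
\item The ODE, written as $(r^{n-1}\phi')'=r^{n-1}\mathcal{A}_r[\Delta u](x)$, with the regular condition $\phi(0)=u(x)$ and $\phi'(0)=0$, has a unique solution.
\item Integrating twice yields exactly $u(x)+\sum_{k=1}^{m-1}d_k\Delta^ku(x)\,r^{2k}$, the coefficients matching via $d_{k+1}=d_k/\big(2(k+1)(n+2k)\big)$.
\end{itemize}
The ball formula follows again by radial integration. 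The base case $m=1$ is the classical mean-value property.
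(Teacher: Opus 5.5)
Your proof is correct and follows essentially the same plan as the paper: a Taylor expansion with moment integrals for (1), a Lagrange remainder of order $2m+1$ for (2), and induction on $m$ through a radial ODE for (3). The main difference is that you treat spherical means first, using the Darboux equation $(r^{n-1}\phi')'=r^{n-1}\mathcal{A}_r[\Delta u](x)$, and recover the ball formulas from $A_r[u](x)=\frac{n}{r^n}\int_0^r t^{n-1}\mathcal{A}_t[u](x)\,dt$; the paper instead works with ball means via \L{}ysik's ODE and passes to the sphere through $\big(\frac{r}{n}\partial_r+1\big)A_r[u]=\mathcal{A}_r[u]$, and your enlarged compact set $K'$ in (2) is slightly more careful than the paper's condition $B_r(x)\subset K$.
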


\begin{proof}
We prove the results for the solid average $A_r$,
since the proofs for the spherical average $\mathcal{A}_r$ are essentially identical.

\medskip
\noindent1.\quad{}Since $u \in C^{2m}(\Omega),$ $D^{\alpha}u(x)$ is defined for all multi-indices $|\alpha| \le 2m$ and the $2m$-th order Taylor polynomial of $u$ about $x$ is given by
    \begin{equation*}
        T_{2m,x}(y)=\sum_{|\alpha|\le 2m}\frac{D^{\alpha}u(x)}{\alpha!}(y-x)^{\alpha}.
    \end{equation*}
As usual, for a multi-index $\alpha=(\alpha_1, \cdots, \alpha_n)$, we denote
$|\alpha| = \alpha_1+\cdots+\alpha_n$, 
$\alpha!=\alpha_1!\cdots\alpha_n!,$ and
    \begin{equation*}
        D^{\alpha}u(x)=\frac{\partial^{|\alpha|}u}{\partial x_1^{\alpha_1}\cdots\partial x_n^{\alpha_n}}(x), \qquad (y-x)^{\alpha}=(y_1-x_1)^{\alpha_1}\cdots(y_n-x_n)^{\alpha_n}.
    \end{equation*}
Then, for $y \in B_r(x)$, we have the Taylor expansion about $x$, given by
    \begin{equation}\label{2.taylor.poly}
        u(y)=T_{2m,x}(y)+o\big(|y-x|^{2m}\big) \quad \textrm{as}\quad |y-x|\to 0.
    \end{equation}
We intend to integrate \eqref{2.taylor.poly} term by term over a ball $B_r(x)$. First observe that
    \begin{equation}\label{2.taylor.expansion}
     \int_{B_{r}(x)} T_{2m,x}(y)\, dy
     =
     u(x)+
     \sum_{k=1}^{m}\frac{|B_{r}(x)|}{2^{k}k!\prod_{j=1}^{k}(n+2j)}\,\Delta^{k}u(x)\,r^{2k},
    \end{equation}
which follows from the following calculus identity (see \cite{O, Folland})
\begin{equation}\label{lemma.monomials}
\frac{1}{\alpha!} \int_{B_{r}(x)}(y-x)^{\alpha}\, dy 
=
\left\{
\begin{aligned}
&\frac{|B_{r}(x)|}{2^{k}\mu!\prod_{j=1}^{k}(n+2j)}\,r^{2k}&&
\textrm{if $\alpha=2\mu$ and $|\alpha|=2k$},\\
&0 &&\textrm{otherwise,}
\end{aligned}
\right.
\end{equation}
and the multinomial expansion of $\Delta^k$, i.e.,
\begin{equation*}
\Delta^{k}u(x)=\left(\frac{\partial^2}{\partial x_1^2}+\cdots+\frac{\partial^2}{\partial x_n^2}\right)^k u(x)=\sum_{\substack{|\alpha|=2k\\\alpha=2\mu}}\frac{k!}{\mu!}\,D^{\alpha}u(x).
\end{equation*}
Therefore, from \eqref{2.taylor.poly}, \eqref{2.taylor.expansion}, we get
\[
\int_{B_{r}(x)} u(y)\, dy= \int_{B_{r}(x)} T_{2m,x}(y)\, dy + |B_{r}(x)|\,o(r^{2m})\quad\textrm{as}\quad r\to0,
\]
and dividing by $|B_{r}(x)|$, gives \eqref{2.series.expansion.mvp.ball}.

\medskip
\noindent2.\quad{}Assume now $u\in C^{2m+1}(\Omega)$ and let us show that the little-$o$ in \eqref{2.series.expansion.mvp.ball} is locally uniform as in Definition \ref{loc_unif}.
Let $K$ be a compact subset of $\Omega$, $\epsilon>0$ fixed, and two points $x,y\in K$.
 Since $u\in C^{2m+1}(\Omega)$, 
 we can rewrite \eqref{2.taylor.poly} with 
the remainder in  Lagrange's form, i.e.,
\begin{equation}\label{taylorwithlagrangeremainder}
u(y)=T_{2m,x}(y)+R_{2m,x}(y).
\end{equation}
with
\[
R_{2m,x}(y)= \sum_{|\alpha|= 2m+1} \frac{D^{\alpha}u(\xi_y)}{\alpha!} (y-x)^{\alpha},
\]
and $\xi_y\in K$ in the line segment between $y$ and $x$.
Integrating \eqref{taylorwithlagrangeremainder} over a ball $B_r(x)\subset K$ and using \eqref{2.taylor.expansion}, we obtain
\begin{equation}\label{identity.with.remainder}
\dashint_{B_{r}(x)} u(y)\, dy
-
u(x)
-
\sum_{k=1}^{m}\frac{1}{2^{k}k!\prod_{j=1}^{k}(n+2j)}\,\Delta^{k}u(x)\,r^{2k}
=
\dashint_{B_{r}(x)} R_{2m,x}(y)\, dy.
\end{equation}
Let us now  estimate the right-hand side of \eqref{identity.with.remainder}. Observe that  \eqref{lemma.monomials} does not apply directly since, contrary to before, we cannot factor $D^{\alpha}u(\xi_y)$ out of the integral due to the dependence of $\xi_y$ on $y$.

For all the multi-indices $\alpha$ with $|\alpha|=2m+1,$ the derivatives $D^\alpha u$ are continuous over the compact set $K$. Therefore, there exists a constant $M>0$ (depending on $K,m,n$ but not $x,\xi_y$), such that
\begin{equation}\label{estimete.remainder}
    \begin{split}
        \sup_{y\in B_r(x)}|R_{2m,x}(y)|\le \sup_{y\in  B_r(x)} \left(\sum_{|\alpha| = 2m+1} \frac{|D^{\alpha}u(\xi_y)|}{\alpha!}|y-x|^{2m+1}\right) \le M\,r^{2m+1}.
    \end{split}
\end{equation}
Then, from \eqref{identity.with.remainder} and \eqref{estimete.remainder}, we get
\[
\sup_{x\in K}\left|\,
\dashint_{B_{r}(x)} u(y)\, dy
-
u(x)-
\sum_{k=1}^{m}\frac{1}{2^{k}k!\prod_{j=1}^{k}(n+2j)}\,\Delta^{k}u(x)\,r^{2k}
\right|
\le M\,r^{2m+1}\leq\epsilon\,r^{2m}
\]
for all $0<r<r_0=\epsilon/M,$
which completes the proof.

\medskip
\noindent3.\quad{}We prove the statement by induction on $m$ as in \cite{Lysik.2011}. Let us first prove the formula for the solid average. We intend to prove that for every integer $m\geq 1,$ it holds that
 \begin{equation}\label{pizzetti.exact.proof}
 \begin{split}
 \textrm{For every $v\in C^{2m}(\Omega)$ such that $\Delta^mv=0$,}&\\
    A_{r}[v](x)=v(x)+\sum_{k=1}^{m-1}c_{k}\,\Delta^{k}v(x)\,r^{2k}&
\quad\textrm{for all $x\in\Omega$ and    ${r}<\textnormal{dist}(x,\partial\Omega)$.}
\end{split}
\end{equation}

If $m=1$ any $u\in C^{2}(\Omega)$ such that $\Delta u=0$ satisfies the classical mean value property, giving
$A_r[u](x)=u(x)$, as claimed.
Assume now that \eqref{pizzetti.exact.proof} holds for some integer $m> 1$, and  let us show it then holds for $m+1$. In particular, let $u\in C^{2m+2}(\Omega)$ satisfy $\Delta^{m+1}u=0$, so that $v=\Delta u\in C^{2m}(\Omega)$ and $\Delta^{m}v=\Delta^{m+1}u=0$. By the inductive hypothesis we have 
\begin{equation}\label{Appendix.3.first}
A_{r}[\Delta u](x)=\sum_{k=0}^{m-1}c_{k}\,\Delta^{k+1}u(x)\,r^{2k}
\end{equation}
for every admissible $r$.
On the other hand, a direct calculation (see \cite{Lysik.2011} for details) shows that
\begin{equation}\label{Appendix.3.second}
\frac{n}{r}\,\frac{\partial}{\partial r}\left(\Big(\frac{r}{n}\frac{\partial}{\partial r}+1\Big)\,A_r[u](x)\right)
= A_r[\Delta u](x).
\end{equation}

From \eqref{Appendix.3.first} and \eqref{Appendix.3.second}, we get the following non-homogeneous ODE satisfied by $A_r[u](x)$:
\[
\frac{n}{r}\,\frac{\partial}{\partial r}\left(\Big(\frac{r}{n}\frac{\partial}{\partial r}+1\Big)\,A_r[u](x)\right)
=\sum_{k=0}^{m-1}c_{k}\,\Delta^{k+1}u(x)\,r^{2k}.
\]
Integrating, we have
\begin{equation}\label{ODE.to.solve}
\Big(\frac{r}{n}\frac{\partial}{\partial r}+1\Big)\,A_r[u](x)
=\sum_{k=0}^{m-1}\frac{c_{k}}{n(2k+2)}\,\Delta^{k+1}u(x)\,r^{2k+2}+c
\end{equation}
for some integration constant $c$. The general solution of the homogeneous equation is $Cr
^{-n}$ with $C$ a constant. A particular solution of 
\[
\Big(\frac{r}{n}\frac{\partial}{\partial r}+1\Big)\,A_r[u](x)
=\frac{c_{k}}{n(2k+2)}\,\Delta^{k+1}u(x)\,r^{2k+2}
\]
is $c_{k+1}\,\Delta^{k+1}u(x)\,r^{2k+2}$. 
Therefore, the general solution of \eqref{ODE.to.solve} is
\[
A_r[u](x) = C r^{-n} + 
\sum_{k=0}^{m-1}c_{k+1}\,\Delta^{k+1} u(x)\, r^{2k+2} + c,
\]
for constants $C,c$ yet to be determined. Since 
\[
\lim_{r\to 0}A_r[u](x)=u(x)
\qquad
\textrm{and}
\qquad
\lim_{r\to 0}r^nA_r[u](x)=0,
\]
we have $C=0$ and $c= u(x),$ which completes the inductive step.
Finally, following \cite{Lysik.2011}, we obtain the formula for the spherical average by using that 
\[
\Big(\frac{r}{n}\frac{\partial}{\partial r}+1\Big)\,A_r[u](x)
=\mathcal{A}_r[u](x).\qedhere
\]
\end{proof}

%


\begin{thebibliography}{BH}

\bibitem{Aronszajn.et.al.1983}N. Aronszajn, T.M. Creese, L.J. Lipkin; \emph{Polyharmonic Functions}, Clarendon Press, Oxford (1983).

\bibitem{BCMR}
P. Blanc, F. Charro, J. J. Manfredi, J. D. Rossi; \emph{Asymptotic mean-value formulas for solutions of general second-order elliptic equations,} Advances in Nonlinear Studies, vol. 22 (2022), no. 1, pp. 118-142. 

\bibitem{Blaschke} W. Blaschke; \emph{Ein mittelwertsatz und eine kennzeichnende eigenschaft des logarithmischen potentials}, Ber. Verh. S\"achs. Akad. Wiss., Leipziger, 68, (1916), 3--7.

\bibitem{BHP} J. H. Bramble,  L. E. Payne; \emph{Mean-value theorems for polyharmonic functions}, The American Mathematical Monthly 73, no. 4P2 (1966), 124-127.

\bibitem{Caramanica.1987}A. Caramanica; \emph{Teoremi di media e formule di maggiorazione per funzioni poliarmoniche,} Universit\`a degli Studi di Roma ``La Sapienza," 1987.

\bibitem{Caramuta.Cialdea.2014} P. Caramuta, A. Cialdea; \emph{mean-value theorems for polyharmonic functions: A conjecture by Picone}, Analysis 34, no. 1 (2014), 51-66.

\bibitem{CM} M.T. Cheng; \emph{On a theorem of Nicolesco and generalized Laplace operators}, Proceedings of the American Mathematical Society 2, no. 1 (1951), 77-86.
  
  \bibitem{Chui.2009}
 C. K. Chui; \emph{An MRA approach to surface completion and image inpainting}, Applied and Computational Harmonic Analysis , vol. 26  (2009), no. 2, pp. 270--276.


\bibitem{Evans}
L. C. Evans; \emph{Partial differential equations}, Vol. 19. American Mathematical Soc., 2010.

\bibitem{Fichera} G. Fichera; \emph{Teoremi di media e formole di maggiorazione relative alle funzioni biarmoniche}, Rend. Sem. Mat. Univ. Padova, 59, 1978, 285--294.

\bibitem{Folland}G. Folland; \emph{How to integrate a polynomial over a sphere}, Amer. Math. Monthly 108 (2001) 446–448.

\bibitem{Gazzola.et.al.2010}F. Gazzola, H.-C. Grunau, G. Sweers; \emph{Polyharmonic boundary value problems: positivity preserving and nonlinear higher order elliptic equations in bounded domains}, Springer Science \& Business Media, 2010.



  \bibitem{Kirmani.Jamil.2018}S.K.N. Kirmani, R.N. Jamil; \emph{Optimization of complex geometry using tenth order partial differential equation}, Sci Inquiry Rev. 2018;2(2):24--31.

 \bibitem{Kobayashi.et.al.2011} T. Kobayashi, N. Kawashima, and Y. Ochiai;
 \emph{Image processing by interpolation using polyharmonic function and increase in processing speed}, IEEJ Trans Elec Electron Eng, 6: S1-S6 (2011).

\bibitem{Li.et.al.1999}X.-F., Li, T.-Y. Fan,  Y.-F. Sun;  \emph{A decagonal quasicrystal with a Griffith crack}, Philosophical Magazine A 79, no. 8 (1999): 1943-1952.



\bibitem{Lysik.2011} G. \L{}ysik; \emph{On the mean-value property for polyharmonic functions}, Acta Mathematica Hungarica 133, no. 1-2 (2011),
133-139.

\bibitem{Lysik.2012} G. \L{}ysik; \emph{Mean-value properties of real analytic functions}, Archiv der Mathematik 98, no. 1 (2012), 61-70.

\bibitem{Lysik.2015} G. \L{}ysik; \emph{A characterization of polyharmonic functions}, Acta Mathematica Hungarica 147, no. 2 (2015).

\bibitem{Lysik.2016} G. \L{}ysik; \emph{Higher order Pizzetti’s formulas}, Rendiconti Lincei 27, no. 1 (2016), 105-115.

\bibitem{MPR} J. J. Manfredi, M. Parviainen, and J. D. Rossi, \emph{An asymptotic mean-value characterization of $p$-harmonic functions}, Proc. Amer. Math. Soc., 138 (2010), pp. 881--889.



  \bibitem{Meleshko.2003}V. V. Meleshko; \emph{Selected topics in the history of the two-dimensional biharmonic problem}, Appl. Mech. Rev. 56, no. 1 (2003): 33-85. 


\bibitem{NM} M. Nicolesco; \emph{Recherches sur les fonctions polyharmoniques}, in Annales scientifiques de l'Ecole normale sup\'erieure, vol. 52, pp. 183-220. 1935.

\bibitem{Nicolesco.book.1936} M. Nicolesco; \emph{Les fonctions polyharmoniques,} Act. Scient. Industr. 331, Hermann et Cie Edit., Paris, 1936.

\bibitem{O} J.S. Ovall; \emph{The Laplacian and mean and extreme values}, The American Mathematical Monthly 123, no. 3 (2016), 287-291.

\bibitem{M} M. Picone; \emph{Sulla convergenza delle successioni di funzioni iperarmoniche}, Bulletin math\'ematique de la Soci\'et\'e Roumaine des Sciences 38, no. 2 (1936), 105-112.


\bibitem{Pizzetti.1909} P. Pizzetti; \emph{Sulla media dei valori che una funzione dei punti dello spazio assume alla superficie di una sfera}, Rend. Lincei 18 (1909), 182-185.

\bibitem{Privaloff} I. Privaloff; \emph{Sur les fonctions harmoniques}, Mat. Sb., 32(3), (1925), 464--471.

\bibitem{Sbr} F. Sbrana; \emph{Sopra una propriet\`a caratteristica delle funzioni poliarmoniche e delle soluzioni dell'equazione delle membrane vibranti}, 
Atti Accad. Naz. Lincei. Rend. VI Ser., 1 (1925), pp. 369--371.

\bibitem{Schmaltz.2014}  C. Schmaltz, P. Peter, M. Mainberger, et al.; \emph{Understanding, Optimising, and Extending Data Compression with Anisotropic Diffusion}, Int. J. Comput. Vis. 108 (2014), pp. 222--240 . 

\bibitem{LZ} L. Zalcman; \emph{Mean-values and differential equations}, Israel Journal of Mathematics 14, no. 4 (1973), 339-352.

\end{thebibliography}
\end{document}